\newcommand{\R}{\mathbb{R}}
\newcommand{\N}{\mathbb{N}}
\newcommand{\id}{\mathrm{id}}
\newcommand{\ri}{\operatorname{ri}}
\newcommand{\range}{\operatorname{range}}
\newcommand{\inter}{\operatorname{int}}
\newcommand{\Fix}{F}
\newcommand{\ext}{\operatorname{ext}}
\newcommand{\inner}[1]{\left\langle #1 \right\rangle}
\newcommand{\Hp}{\mathcal{H}^+_\infty(T)}
\newcommand{\Gp}{\mathcal{G}_\infty(T)}
\newcommand{\Hm}{\mathcal{H}^-_\infty(T)}
\newcommand{\reach}{\operatorname{reach}}
\newcommand{\mylambda}{t}
\newcommand{\mymu}{s}
\theoremstyle{plain}
\newtheorem{theorem}{Theorem}[section]
\newtheorem{lemma}[theorem]{Lemma}
\newtheorem{corollary}[theorem]{Corollary}
\newtheorem{proposition}[theorem]{Proposition}
\theoremstyle{remark}
\newtheorem{remark}[theorem]{Remark}
\theoremstyle{definition}
\newtheorem{example}[theorem]{Example}
\numberwithin{equation}{section}
\begin{document}
\title{Nonexpansive maps with surjective displacement}
\author[B. Lins]{Brian Lins}
\date{}
\address{Brian Lins, Hampden-Sydney College}
\email{blins@hsc.edu}
\subjclass[2010]{Primary 47H09, 47H10, 47H07; Secondary 54D35}
\keywords{Nonexpansive maps, surjective displacement, nonlinear Perron-Frobenius theory, topical maps, subtopical maps, existence and uniqueness of fixed points, metric compactification, horofunctions}

\begin{abstract}
We investigate necessary and sufficient conditions for a nonexpansive map $f$ on a Banach space $X$ to have surjective displacement, that is, for $f - \id$ to map onto $X$.  In particular, we give a computable necessary and sufficient condition when $X$ is a finite dimensional space with a polyhedral norm.  We give a similar computable necessary and sufficient condition for a fixed point of a polyhedral norm nonexpansive map to be unique. We also consider applications to nonlinear Perron-Frobenius theory and suggest some additional computable sufficient conditions for surjective displacement and uniqueness of fixed points.    
\end{abstract}

\maketitle
 
\section{Introduction}


Given a nonexpansive map on a metric space, it is not always easy to determine whether the map has any fixed points. 
In Banach spaces with the fixed point property, any nonexpansive map that leaves a closed, bounded, convex set invariant has a fixed point.  
However, in applications it can be difficult to determine whether a nonexpansive map 
has any invariant, bounded, closed, convex sets. 

For finite dimensional Banach spaces, necessary and sufficient conditions for nonexpansive maps to have a nonempty and bounded set of fixed points were given in \cite{LLN16}.  When $X$ is a finite dimensional normed space with either a smooth norm or a polyhedral norm, \cite[Section 4]{LLN16} describes a non-deterministic algorithm that eventually halts (almost certainly) if and only if the fixed point set is nonempty and bounded.  

Somewhat surprisingly, it may be easier to determine whether a nonexpansive map $f:X \rightarrow X$ has the stronger property that the map $f+u$ has a fixed point for every $u \in X$. This is equivalent to $f$ having surjective displacement, that is $f-\id$ being onto. The main result of this paper is a computable test involving a finite set of limits which determines whether or not $f$ has surjective displacement when $X$ is a polyhedral normed space. A similar test can also determine whether a fixed point of a nonexpansive map on a polyhedral normed space is unique.

A motivation for this investigation comes from nonlinear Perron-Frobenius theory which seeks conditions for the existence and also uniqueness of eigenvectors of order-preserving and homogeneous maps. Some general results in this area include \cite{Birkhoff57,Morishima64,Oshime83,Nussbaum88,GaGu04,AkGaHo20}.
Many of these results rely on the close connection between nonlinear Perron-Frobenius theory and the theory of nonexpansive maps, see e.g. \cite{KoPr82,LemmensNussbaum}. 
Our results are inspired by a hypergraph condition from \cite{AkGaHo20}
that determines whether or not an order-preserving, additively homogeneous function $T: \R^n \rightarrow \R^n$ has several properties, one of which is that the functions $T+u$ have an additive eigenvector for every $u \in \R^n$ (see also \cite{AkGaHo15} where the hypergraph condition was first introduced for Shapley operators associated with certain two-player games).  
Order-preserving and additively homogeneous maps on $\R^n$ are nonexpansive with respect to the variation seminorm, so the results in \cite{AkGaHo15} and \cite{AkGaHo20} can be interpreted as fixed point results for a special class of nonexpansive maps. 
We will show that the results in \cite{AkGaHo20} can be derived from our general test for surjective displacement of a polyhedral norm nonexpansive map.

The paper is organized as follows.  We begin by collecting some facts about nonexpansive maps, Banach spaces, and the metric compactification and horofunctions of a Banach space. We use these facts in Section \ref{sec:surjDisp} to identify necessary and sufficient conditions for a nonexpansive map to have surjective displacement. The main result of Section \ref{sec:surjDisp} is Theorem \ref{thm:abcd} which summarizes these conditions 
for finite and infinite dimensional Banach spaces. In Section \ref{sec:polyhedral} we prove our main result which is a computable necessary and sufficient condition for a polyhedral norm nonexpansive map to have surjective displacement.  We look at applications of our main result to nonlinear Perron-Frobenius theory in Section \ref{sec:PF} and in Section \ref{sec:unique} we give a computable necessary and sufficient condition for a fixed point of a polyhedral norm nonexpansive map to be unique.  We conclude with a look at other computable sufficient conditions for a nonexpansive map to have surjective displacement or a unique fixed point.



\section{Preliminaries}

For any metric space $(X,d)$, a map $f: X \rightarrow X$ is \emph{nonexpansive} if $d(f(x),f(y)) \le d(x,y)$ for all $x, y \in M$.  Here we focus on nonexpansive maps $f$ defined on a real Banach space. For a Banach space $X$, we use the following notation for the norm $\|\cdot\|$, dual space $X^*$, and dual norm $\|\cdot\|_*$. Let $B_R$ denote the closed ball with radius $R$ and center $0$ in $X$ and let $B^*_R$ denote the closed ball of radius $R$ around $0$ in $X^*$. The identity map on $X$ is $\id$, and $J$ denotes the set-valued \emph{duality map} $J:X \rightrightarrows X^*$ given by  
$$J(x) := \{x^* \in X^* : \|x^* \|_* = \|x\|, \inner{x, x^*} = \|x\| \, \|x^*\|_* \}.$$

For any function $f: X \rightarrow X$, we refer to $f - \id$ as the \emph{displacement map} associated with $f$.  We say that $f$ has \emph{surjective displacement} if $f - \id$ maps onto $X$ and $f$ has \emph{dense displacement} if the range of $f-\id$ is dense in $X$.  For any $\delta \ge 0$, we define $\Fix_\delta(f) = \{x \in X : \|f(x) - x\| \le \delta \}$. When $\delta > 0$, we refer to $\Fix_\delta(f)$ as an \emph{approximate fixed point set} of $f$. Of course, $\Fix_0(f)$ is the set of fixed points of $f$.

A convex subset $F$ of a convex set $C$ is called a \emph{face} of $C$ if $tx+ (1-t)y \in F$ with $x,y \in C$ and $0 < t <1$ implies that $x$ and $y$ are contained in $F$. If $F$ is a face of $C$ and $F \ne C$, then $F$ is a \emph{proper face}.  The intersection of a supporting hyperplane with $C$ is called an \emph{exposed face}. All exposed faces are proper faces, but there are simple examples where proper faces are not exposed faces (see \cite[Section 18]{Rockafellar}). For any convex set $C$, we denote the extreme points of $C$ by $\ext C$ and the relative interior of $C$ by $\ri C$.  For a proper face of $F$ of the unit ball in a real Banach space, the \emph{dual face} $F^*$ is the image of $\ri F$ under the duality mapping $J$.  

A norm $\| \cdot \|$ on $X$ is \emph{polyhedral} if the unit ball $B_1$ is a convex polytope, that is, if it has a finite number of extreme points. A real Banach space $X$ is \emph{strictly convex} if $\|x+y\| < 2$ whenever $x,y \in X$ with $x \ne y$ and $\|x\|=\|y\| = 1$. It is \emph{uniformly convex} if for every $0 < \epsilon \le 2$, there is a $\delta > 0$ such that for any $x, y \in X$ with $\|x\|=\|y\| = 1$ and $\|x-y\| \ge \epsilon$, $\|x+y\| \le 2 - \delta$. A real Banach space $X$ is \emph{smooth} if $X^*$ is strictly convex, and \emph{uniformly smooth} if $X^*$ is uniformly convex. Note that uniformly convex and uniformly smooth Banach spaces are always reflexive  \cite[Corollary 3.21 and Theorem 3.31]{Brezis}.  

A Banach space $X$ has the \emph{fixed point property} if for every closed, bounded, convex subset $C \subset X$ and every nonexpansive map $f:C \rightarrow C$, $f$ has a fixed point. Browder \cite{Browder65} and G\"{o}hde \cite{Gohde65} independently proved that uniformly convex Banach spaces have the fixed point property, while Kirk proved that reflexive Banach spaces with normal structure have the fixed point property \cite{Kirk65}. It turns out that all uniformly convex Banach spaces have normal structure \cite[Theorem 4.1]{BeKi67}, while there are examples of reflexive Banach spaces with normal structure that are not isomorphic to any uniformly convex Banach space \cite{BeKiSt68}. Therefore Kirk's result is more general than the results of Browder and G\"{o}hde. Since then, other conditions have been found that are sufficient for a Banach space to have the fixed point property. It is a famous open problem whether or not all reflexive Banach spaces have the fixed point property.  See \cite{GF-LF-MN06} and the references therein for some recent contributions and a good introduction to the topic.  

\subsection{The metric compactification and horofunctions.}

In any metric space $(X,d)$, we can choose an arbitrary base point $b$, and consider the functions
$$h_y(x) = d(x,y) - d(b,y).$$
Each of these functions is 1-Lipschitz and $h_y(b) = 0$.  Therefore $h_y(x)$ takes values in the closed interval $[-d(x,b),d(x,b)] \subset \R$ for each $x \in X$, so each function $h_y$ can be identified with an element of the product space
$$\prod_{x \in X} [-d(x,b),d(x,b)]$$
which is a compact topological space by Tychonoff's theorem. The topology on the product space is equivalent to the topology of pointwise convergence, and the closure of the set $\{h_y: y \in X\}$ in this topology is known as the \emph{metric compactification of} $(X,d)$.  The elements of the metric compactification are called \emph{metric functionals}. Metric functionals include both the functions $h_y$ described above, and also functions on the boundary of the metric compactification, which are known as \emph{horofunctions}.  Any horofunction $h$ is a pointwise limit
$$h(x) = \lim_{\beta} d(x,y_\beta) - d(b,y_\beta)$$
where $y_\beta$ is a net in $(X,d)$. For 1-Lipschitz functions, the topology of pointwise convergence is equivalent to the topology of uniform convergence on compact subsets of $X$ by the Arzel\`{a}-Ascoli theorem. If the metric space $(X,d)$ is proper, i.e., all closed balls in $X$ are compact, then the topology of uniform convergence on compact subsets of $X$ is metrizable. In that case, we can use sequences $y_k$ to define horofunctions rather than nets. This is true, in particular, if $X$ is a finite dimensional Banach space. In addition, $d(b,y_k) \rightarrow \infty$ for any sequence $y_k$ that defines a horofunction in a proper metric space. Note that horofunctions are always 1-Lipschitz.  In a Banach space, horofunctions are also convex. Since the base point $b \in X$ is arbitrary, it is standard practice to choose $b = 0$ when $X$ is a Banach space. 

In a finite dimensional real Banach space, Walsh has shown (implicitly in \cite[Theorem 1.1]{Walsh07}), 
that a subset of the horofunctions known as the Busemann points are given by 
\begin{equation} \label{eq:Cormac}
h(x) = \sup_{\nu \in F^*} \inner{u-x, \nu},
\end{equation}
where $F^*$ is a proper face of $B_1^*$ and $u \in X$ satisfies $\sup_{\nu \in F^*} \inner{u,\nu} = 0$. For some finite dimensional Banach spaces all horofunctions are Busemann points, but this is not true in general.  The following is \cite[Theorem 1.2]{Walsh07}.

\begin{theorem}[Walsh] \label{thm:Cormac} Let $X$ be a finite dimensional real Banach space.  All horofunctions on $X$ are Busemann points given by \eqref{eq:Cormac} if and only if the faces of the dual unit ball $B_1^*$ are closed in the Painlev\'{e}-Kuratowski topology. This is true, in particular, if $X$ has a polyhedral norm or if $X$ is smooth. 
\end{theorem}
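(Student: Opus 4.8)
The plan is to prove the equivalence in both directions and then dispatch the two special cases. Since $X$ is finite dimensional it is proper, so I may assume every horofunction is a pointwise limit $h(x) = \lim_k (\|x-y_k\| - \|y_k\|)$ for some $y_k$ with $\|y_k\| \to \infty$, and it suffices to treat such limits. (Walsh's own argument works directly with almost-geodesics; I find the Legendre--Fenchel viewpoint below makes the structure more transparent.)

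For the implication ``faces of $B_1^*$ closed under Painlev\'e--Kuratowski limits $\Rightarrow$ every horofunction is a Busemann point,'' the identity $\|x - y\| - \|y\| = \sup_{\|\nu\|_* \le 1}\big(\langle x,\nu\rangle - (\langle y,\nu\rangle + \|y\|)\big)$ identifies the conjugate of $g_k := \|\cdot - y_k\| - \|y_k\|$ as the affine function $\nu \mapsto \langle y_k,\nu\rangle + \|y_k\|$ on $B_1^*$, and $+\infty$ off $B_1^*$; this function is nonnegative on $B_1^*$, so $\inf = 0 = h(0)$ is automatic. The $g_k$ are finite convex functions converging pointwise on a finite dimensional space, hence converging uniformly on compacta and epi-converging to $h$; since conjugation is continuous for epi-convergence (Wijsman's theorem), $h^*$ is the epi-limit of these affine-on-$B_1^*$ functions. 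I would then extract asymptotic scales: passing to subsequences, $\xi_k := y_k/\|y_k\| \to \xi_0$ on the unit sphere, and on the face $J(-\xi_0)$ of $B_1^*$ the conjugates, divided by $\|y_k\|$, reduce to the linear functionals $\nu \mapsto \langle \|y_k\|(\xi_k - \xi_0),\nu\rangle$; if $\|y_k\|(\xi_k - \xi_0)$ is bounded its limit $u$ gives $h^* = \langle u,\cdot\rangle$ on $J(-\xi_0)$, and if it is unbounded its normalized limit cuts out a proper face of $J(-\xi_0)$ on which the analysis is repeated. The resulting chain of faces is strictly decreasing, hence finite, leaving $h^*$ equal to an affine functional $\langle u,\cdot\rangle$ on a face $F^*$ of $B_1^*$ and $+\infty$ elsewhere, so that $h = h^{**}$ is exactly \eqref{eq:Cormac} and $\sup_{\nu\in F^*}\langle u,\nu\rangle = h(0) = 0$. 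The hard part, and the place where the hypothesis is indispensable, is controlling the epi-limit near the relative boundary of the current face: approach sequences arriving from outside that face can pull $h^*$ down, and ensuring that $\overline{\operatorname{dom} h^*}$ is an honest closed proper face of $B_1^*$ with $h^*$ affine on it, which is what \eqref{eq:Cormac} demands, is precisely the assertion that Painlev\'e--Kuratowski limits of faces of $B_1^*$ are again faces. This is the real content of the argument.

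For the converse I would argue by contraposition. Assuming the family of faces of $B_1^*$ is not closed under Painlev\'e--Kuratowski limits, choose faces $F_k$ of $B_1^*$ with $F_k \to S$ where $S$ is not a face; after a reduction to exposed faces one may take $F_k = J(\xi_k)$ for unit vectors $\xi_k$. Choosing radii $r_k \to \infty$ growing fast enough, together with small corrections tuned to a prescribed shift, I would arrange that $y_k = r_k\xi_k + (\text{correction})$, heading towards $\ri F_k$, defines a horofunction $h$ whose conjugate is affine on a translate of $-S$ and $+\infty$ elsewhere. Because $S$ is not a face of $B_1^*$ — so either its closure fails to be a face or $h^*$ is forced to be non-affine on it — $h$ cannot be rewritten in the form \eqref{eq:Cormac}, hence is not a Busemann point. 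The delicate step is choosing the corrections so that $h$ genuinely records the bad limit $S$ and cannot be massaged into a legitimate Busemann representation over some other proper face.

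Finally, the two special cases. If $\|\cdot\|$ is polyhedral then $B_1$, hence $B_1^*$, is a polytope with finitely many faces, each closed; a Painlev\'e--Kuratowski convergent sequence drawn from a finite family of closed sets is eventually constant, so its limit is one of the faces. If $X$ is smooth then $X^*$ is strictly convex, so the boundary of $B_1^*$ contains no nondegenerate segment and the proper faces of $B_1^*$ are exactly the singletons $\{\nu\}$ with $\nu$ an extreme point; if $\nu_k \to \nu$ with each $\nu_k$ extreme, then $\nu$ lies on the closed unit sphere of $X^*$ and is extreme by strict convexity, so $\{\nu\}$ is a face. Thus in both cases the family of faces of $B_1^*$ is closed in the Painlev\'e--Kuratowski topology, and the equivalence applies.
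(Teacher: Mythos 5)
This is a cited result in the paper — the statement is attributed to Walsh and the paper gives no proof, only the reference \cite[Theorem 1.2]{Walsh07} — so there is no ``paper's proof'' to compare against. Judged on its own terms, your Legendre--Fenchel sketch is a sensible and appealing way to organize the argument, and the two special cases at the end (polyhedral and smooth) are done correctly: for a polytope the face collection is finite, so a Painlev\'e--Kuratowski convergent sequence of faces is eventually constant; for a smooth space the proper faces of $B_1^*$ are the singleton extreme points, and strict convexity of $X^*$ shows a limit of such singletons is again a singleton extreme point.

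The two directions of the equivalence, however, are not proofs but plans, and they contain genuine gaps. In the forward direction you correctly compute $g_k^*(\nu)=\langle y_k,\nu\rangle+\|y_k\|$ on $B_1^*$ and appeal to Wijsman continuity of conjugation under epi-convergence; but the crucial claim that $\overline{\operatorname{dom}h^*}$ is an honest proper face of $B_1^*$ on which $h^*$ is affine is precisely what needs the Painlev\'e--Kuratowski hypothesis, and you explicitly label it ``the real content of the argument'' without supplying it. The ``strictly decreasing chain of faces terminates'' step also needs justification: you must rule out the possibility that rescaled corrections accumulate on the relative boundary of the current face in a way that leaves $h^*$ non-affine or leaves its effective domain a non-facial subset. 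In the converse you propose by contraposition to build a bad horofunction from a non-facial limit $S$ of faces $F_k$, but the construction of $y_k = r_k\xi_k + (\text{correction})$ is entirely unspecified, the reduction ``one may take $F_k = J(\xi_k)$'' (i.e.\ to exposed faces) is asserted without argument even though not every face of a convex body is exposed, and the final claim that the resulting $h$ ``cannot be massaged into a legitimate Busemann representation over some other proper face'' is exactly the conclusion you need and is never established. As written, both implications rest on the unproved bridge between the face-topology hypothesis and the structure of $\operatorname{dom}h^*$, so the proposal does not yet constitute a proof of Walsh's theorem.
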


Even when a horofunction on a finite dimensional normed space is not a Busemann point, we can still make the following observation. Note that this result was part of the proof of \cite[Theorem 3.4]{LLN16}, however we reproduce the proof here for the reader's convenience.
\begin{lemma} \label{lem:deepdirection}
Let $X$ be a finite dimensional real Banach space. For any horofunction $h$ on $X$, there is a $v \in X$ with $\|v\|=1$ such that 
$$h(x+\mylambda v) = h(x) - \mylambda $$
for all $x \in X$ and $\mylambda \in \R$.  
\end{lemma}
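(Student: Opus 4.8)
The plan is to realize $h$ as a pointwise limit along a sequence, extract a limiting direction from the normalized points, and then pit the $1$-Lipschitz property of $h$ against convexity of the norm.

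Since a finite dimensional Banach space is a proper metric space, I would start by writing $h(x)=\lim_{k\to\infty}\bigl(\|x-y_k\|-\|y_k\|\bigr)$ for a sequence $(y_k)$ with $r_k:=\|y_k\|\to\infty$. The unit sphere of $X$ is compact, so after passing to a subsequence I may assume $v_k:=y_k/r_k\to v$ for some $v\in X$ with $\|v\|=1$; this is the candidate direction. Because a subsequence of $(y_k)$ defines the same horofunction, it suffices to prove $h(x+\mylambda v)=h(x)-\mylambda$ for every $x$ and every $\mylambda\ge 0$ — the case $\mylambda<0$ follows by evaluating that identity at $x+\mylambda v$. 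Since $h$ is $1$-Lipschitz and $\|\mylambda v\|=\mylambda$, the inequality $h(x+\mylambda v)\ge h(x)-\mylambda$ is automatic, so the lemma reduces to showing $h(x+\mylambda v)\le h(x)-\mylambda$.

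For that inequality I would fix $x$ and $\mylambda\ge 0$, take $k$ large enough that $r_k>\mylambda$, and exploit the fact that $(r_k-\mylambda)v_k=\lambda_k y_k$, where $\lambda_k:=1-\mylambda/r_k\in(0,1)$, is a convex combination of $0$ and $y_k$. Convexity of the norm then gives $\|x-(r_k-\mylambda)v_k\|\le\lambda_k\|x-y_k\|+(1-\lambda_k)\|x\|$, and combining this with the triangle-inequality estimate $\|x+\mylambda v-y_k\|\le\|x-(r_k-\mylambda)v_k\|+\mylambda\|v-v_k\|$ and subtracting $\|y_k\|$ yields
\[
\|x+\mylambda v-y_k\|-\|y_k\|\le\bigl(\|x-y_k\|-\|y_k\|\bigr)-\tfrac{\mylambda}{r_k}\bigl(\|x-y_k\|-\|x\|\bigr)+\mylambda\|v-v_k\|.
\]
Letting $k\to\infty$, the first term tends to $h(x)$, the last term tends to $0$, and, since $\|x-y_k\|/r_k=\|x/r_k-v_k\|\to\|v\|=1$ while $\|x\|/r_k\to 0$, the middle term tends to $\mylambda$; hence $h(x+\mylambda v)\le h(x)-\mylambda$, as needed.

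The genuinely creative step is the choice of $v$, and it is the one place where finite dimensionality is used essentially, via compactness of the unit sphere. Everything else is bookkeeping; the point that requires the most care is the passage to the limit in the displayed estimate — specifically verifying that $\|x-y_k\|/r_k\to 1$, so that the correction term converges to exactly $\mylambda$ rather than to something strictly smaller, which is what forces equality rather than a mere inequality.
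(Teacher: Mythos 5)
Your proof is correct and follows essentially the same strategy as the paper: realize $h$ along a sequence $y_k$ with $\|y_k\|\to\infty$, pass to a subsequence so that $y_k/\|y_k\|\to v$, and then verify the identity in the limit. The only real difference is in the verification step. The paper substitutes the $k$-dependent point $x+t\,(x_k-x)/\|x_k-x\|$ into the $k$-th term of the defining limit, obtains an exact algebraic identity, and then passes to the limit; this implicitly uses that $\|z-x_k\|-\|x_k\|\to h(z)$ locally uniformly in $z$ (Arzel\`a--Ascoli), since the evaluation point moves with $k$. You instead split the identity into the trivial Lipschitz bound $h(x+\mylambda v)\ge h(x)-\mylambda$ and an upper bound obtained from convexity of the norm plus the triangle inequality, always evaluating the defining sequence at the fixed point $x+\mylambda v$, so no appeal to uniform convergence is needed. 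Both are valid; yours is marginally more self-contained, and your reduction of $\mylambda<0$ to $\mylambda\ge 0$ by applying the identity at $x+\mylambda v$ is correct (the paper's one-line computation happens to cover all $\mylambda$ at once, since $1-t/\|x_k-x\|>0$ automatically when $t<0$).
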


\begin{proof}
There is a sequence $x_k$ with $\|x_k\| \rightarrow \infty$ such that 
$$h(x) = \lim_{k \rightarrow \infty} \|x-x_k\| - \|x_k\|.$$
We may assume, by passing to a subsequence if necessary, that $\lim_{k \rightarrow \infty} x_k/\|x_k\| = v$. Then 
\begin{align*}
\left|\left| x + t\frac{x_k - x}{\|x_k - x\|} - x_k \right|\right| - \|x_k\| &= \|x - x_k\| \left( 1 - \frac{t}{\|x_k-x\|} \right) - \|x_k\| \\
&= \|x-x_k\| - \|x_k\|-t.
\end{align*}
In the limit as $k \rightarrow \infty$, this equation becomes $h(x+tv) = h(x) - t$. 
\color{black}
\end{proof} 

Unfortunately, Lemma \ref{lem:deepdirection} does not hold for infinite dimensional Banach spaces. Guti\'{e}rrez \cite{Gutierrez19} has given a complete description of the horofunctions on infinite dimensional Hilbert spaces and $\ell_p$ spaces with $1 \le p < \infty$.  In some Banach spaces (including Hilbert spaces), there are non-trivial horofunctions $h$ that are bounded below. Even when a horofunction is not bounded below, it is possible that there is no direction $v \in X$ such that $\lim_{\mylambda \rightarrow \infty} h(\mylambda v) = - \infty$ as the following example shows.  

\begin{example}
By \cite[Theorem 3.6]{Gutierrez19}, the function 
$$h(x) = \sum_{i \in \N} (|x_i - 1| - 1)$$
is a horofunction on the real Banach space $\ell_1(\N)$. At the same time, it can be shown that $\lim_{\mylambda \rightarrow \infty} h(\mylambda v) = +\infty$ for all nonzero $v \in \ell_1(\N)$ even though $\inf_{x \in \ell_1(\N)} h(x) = -\infty$.   
\end{example}

However, when $X$ is uniformly smooth, the following result is an immediate consequence of \cite[Lemma 5.3]{Gutierrez19}. 
\begin{lemma}[Guti\'{e}rrez] \label{lem:Gu}
Let $X$ be a uniformly smooth real Banach space. If $h$ is a horofunction on $X$ with $\inf_{x \in X} h(x) = - \infty$, then there is a nonzero linear functional $\nu \in B_1^*$ such that $h(x) = - \inner{ x, \nu}$ for all $x \in X$.  
\end{lemma}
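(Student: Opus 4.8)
The plan is to read the functional $\nu$ off directly from a net of elementary metric functionals converging to $h$, using that a uniformly smooth norm is uniformly Fr\'echet differentiable on the unit sphere. Since $X$ is uniformly smooth it is in particular smooth, so for each unit vector $w$ the duality set $J(w)$ is a singleton $\{\psi_w\}$, with $\psi_w$ the derivative of $\|\cdot\|$ at $w$; and uniform smoothness means that for every $\epsilon > 0$ there is a $\delta > 0$ such that $\bigl| \|w + z\| - 1 - \inner{z, \psi_w} \bigr| \le \epsilon \|z\|$ whenever $\|w\| = 1$ and $\|z\| \le \delta$.

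First I would show that any net $(y_\beta)$ in $X$ with $h(x) = \lim_\beta (\|x - y_\beta\| - \|y_\beta\|)$ for all $x$ (one exists since $h$ lies in the metric compactification) must satisfy $\|y_\beta\| \to \infty$: if $(y_\beta)$ had a subnet with $\|y_{\beta'}\| \le M$, then $\|x - y_{\beta'}\| - \|y_{\beta'}\| \ge \|x\| - 2M$ along it, forcing $h(x) \ge \|x\| - 2M$ for all $x$ and contradicting $\inf_{x \in X} h(x) = -\infty$. Writing $y_\beta = t_\beta w_\beta$ with $t_\beta := \|y_\beta\| \to \infty$ and $\|w_\beta\| = 1$, the identity
\[
\|x - y_\beta\| - \|y_\beta\| = t_\beta\Bigl( \bigl\| w_\beta - \tfrac{x}{t_\beta} \bigr\| - 1 \Bigr)
\]
together with the uniform smoothness estimate applied to $z = -x/t_\beta$ (whose norm tends to $0$) gives $\|x - y_\beta\| - \|y_\beta\| + \inner{x, \psi_{w_\beta}} \to 0$, hence $\inner{x, \psi_{w_\beta}} \to -h(x)$, for every $x$. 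Since the functionals $\psi_{w_\beta}$ lie in $B_1^*$ and converge pointwise on $X$ to $x \mapsto -h(x)$, that limit is a linear functional $\nu \in B_1^*$, so $h(x) = -\inner{x, \nu}$; and $\nu \ne 0$, since otherwise $h \equiv 0$ and $\inf_X h = 0$.

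The main obstacle is the linearization: the error in $t_\beta\bigl(\|w_\beta - x/t_\beta\| - 1\bigr) = -\inner{x, \psi_{w_\beta}} + o(1)$ must go to $0$ uniformly in $\beta$, i.e.\ with a rate independent of which unit vector $w_\beta$ is involved, and supplying this is exactly the role of the uniform smoothness hypothesis; ordinary smoothness would give differentiability at each $w_\beta$ but no uniform control. The remaining ingredients are routine --- in particular, the net bookkeeping in the first step is needed because, in a general Banach space, horofunctions arise as limits along nets rather than sequences. This is in essence \cite[Lemma 5.3]{Gutierrez19}, from which the statement also follows at once.
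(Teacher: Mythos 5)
The paper offers no proof of this lemma; it simply cites it as an immediate consequence of \cite[Lemma 5.3]{Gutierrez19}, so there is no internal argument to compare against. Your self-contained argument is correct, and you have identified the right mechanism: unbounded-below forces any defining net $y_\beta$ to escape to infinity, and then uniform Fr\'echet differentiability of the norm (which is precisely what uniform smoothness provides, and the reason mere smoothness would not suffice) linearizes $t_\beta(\|w_\beta - x/t_\beta\| - 1)$ with an error $\le \epsilon\|x\|$ that is uniform in the unit direction $w_\beta$, yielding $\inner{x,\psi_{w_\beta}} \to -h(x)$. The pointwise limit of the norm-one functionals $\psi_{w_\beta}$ is then automatically linear and lies in $B_1^*$, and $\nu \ne 0$ is immediate from $\inf h = -\infty$. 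Two small bookkeeping points worth making explicit if you write this up: the failure of $\|y_\beta\| \to \infty$ for a net means some $M$ has $\{\beta : \|y_\beta\| \le M\}$ cofinal, which is what supplies the bounded subnet; and the uniform-smoothness bound applies only once $\|x\|/t_\beta \le \delta$, which holds eventually along the net since $t_\beta \to \infty$. With those clarifications, this is a complete and faithful reconstruction of the argument the paper delegates to Guti\'errez.
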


In the sequel, we will use horofunctions to help understand nonexpansive maps. One important result we will use is the following special case of \cite[Theorem 1]{GaVi12}. Note that all Banach spaces are metrically star-shaped, as is any convex subset of a Banach space.  
\begin{theorem}[Gaubert-Vigeral] \label{thm:GaVi}
Let $T$ be a nonexpansive self-map of a complete metrically star-shaped metric space $(X,d)$. If $\rho(T) = \inf_{y \in X} d(y,T(y)) > 0$, then there is a horofunction $h$ on $X$ such that $h(T(x)) \le h(x) - \rho(T)$ for all $x \in X$. 
\end{theorem}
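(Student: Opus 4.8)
The plan is to realize the horofunction as a limit of ``resolvent fixed points'' produced by Banach's contraction principle. First I would use the metric star-shapedness of $(X,d)$ (the precise definition is in \cite{GaVi12}): fix the star center $b$, and for each $\mylambda \in (0,1)$ let $c_\mylambda\colon X \to X$ be the contraction toward $b$ along the distinguished paths, which by definition satisfies (among other properties) $d(c_\mylambda(x), c_\mylambda(y)) \le \mylambda\, d(x,y)$ and $d(c_\mylambda(x), x) \le (1-\mylambda)\, d(b,x)$ for all $x,y \in X$; in a Banach space one simply has $c_\mylambda(x) = b + \mylambda(x-b)$. Then $T_\mylambda := c_\mylambda \circ T$ is $\mylambda$-Lipschitz, so by completeness of $X$ and Banach's fixed point theorem it has a unique fixed point $x_\mylambda$, that is, $x_\mylambda = c_\mylambda(T(x_\mylambda))$.

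Next I would extract the two estimates that drive the proof. From $x_\mylambda = c_\mylambda(T(x_\mylambda))$ we get $\rho(T) \le d(x_\mylambda, T(x_\mylambda)) \le (1-\mylambda)\,d(b, T(x_\mylambda))$, hence $d(b, T(x_\mylambda)) \ge \rho(T)/(1-\mylambda)$, which tends to $\infty$ as $\mylambda \uparrow 1$; and then $d(b, x_\mylambda) \ge d(b, T(x_\mylambda)) - d(T(x_\mylambda), x_\mylambda) \ge \mylambda\, d(b, T(x_\mylambda)) \to \infty$. Writing $h_y(x) = d(x,y) - d(b,y)$ for the metric functional based at $y$, the triangle inequality --- inserting $c_\mylambda(T(x))$ between $T(x)$ and $x_\mylambda = c_\mylambda(T(x_\mylambda))$ and using $d(c_\mylambda(T(x)), c_\mylambda(T(x_\mylambda))) \le \mylambda\, d(T(x), T(x_\mylambda)) \le \mylambda\, d(x, x_\mylambda)$ --- gives $d(T(x), x_\mylambda) \le (1-\mylambda)\, d(b, T(x)) + \mylambda\, d(x, x_\mylambda)$, and combining this with $d(x, x_\mylambda) \ge d(b, x_\mylambda) - d(b,x) \ge \mylambda\, d(b,T(x_\mylambda)) - d(b,x) \ge \mylambda\,\rho(T)/(1-\mylambda) - d(b,x)$ yields, after subtracting $h_{x_\mylambda}(x)$, the uniform bound
\[
h_{x_\mylambda}(T(x)) - h_{x_\mylambda}(x) \;\le\; (1-\mylambda)\bigl(d(b,T(x)) + d(b,x)\bigr) - \mylambda\,\rho(T) \qquad (x \in X).
\]
Since the metric compactification is compact, the net $(h_{x_\mylambda})_{\mylambda \in (0,1)}$ admits a subnet converging pointwise to some $h$ with the parameter tending to $1$ along the subnet; letting $\mylambda \uparrow 1$ in the displayed inequality gives $h(T(x)) \le h(x) - \rho(T)$ for all $x \in X$. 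Iterating shows $h(T^n(x)) \le h(x) - n\rho(T) \to -\infty$, so $\inf_X h = -\infty$; since each $h_y$ attains its infimum $-d(b,y) > -\infty$ at $y$, $h$ is not of the form $h_y$ and is therefore a horofunction, as required.

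The step I expect to be the main obstacle is arranging the triangle inequalities so that they come out with the right sign. The naive comparisons --- bounding $d(T(x), x_\mylambda)$ or $d(T(x), T(x_\mylambda))$ directly --- introduce an error term of size $(1-\mylambda)\, d(b, T(x_\mylambda)) \ge \rho(T) > 0$ that does not vanish and in fact pushes the estimate the wrong way. The trick is to detour through $c_\mylambda(T(x))$, whose distance to $T(x)$ is only $(1-\mylambda)\, d(b, T(x)) \to 0$, and then to lower bound $d(x, x_\mylambda)$ by $d(b, x_\mylambda) - d(b,x)$, so that the genuinely large quantity $(1-\mylambda)\, d(b, x_\mylambda) \ge \mylambda\,\rho(T)$ enters with a negative sign and survives as exactly $-\rho(T)$ in the limit; the hypothesis $\rho(T)>0$ is precisely what forces $d(b,x_\mylambda)\to\infty$ and hence makes $h$ a boundary point. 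A secondary point, needed because the metric compactification need not be metrizable when $X$ is not proper, is to argue with subnets rather than sequences and to confirm --- via the $\inf_X h = -\infty$ observation --- that the limit genuinely lies on the boundary of the metric compactification rather than being one of the functionals $h_y$.
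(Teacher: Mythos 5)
The paper does not prove this theorem; it is quoted verbatim as a special case of \cite[Theorem 1]{GaVi12}, so there is no in-paper argument to compare against. That said, your proof is correct, and the resolvent technique you use (passing to the $\lambda$-contracted fixed points $x_\mylambda$ of $c_\mylambda\circ T$, showing $d(b,x_\mylambda)\to\infty$ from the hypothesis $\rho(T)>0$, and extracting a limit of the metric functionals $h_{x_\mylambda}$) is, to the best of my knowledge, exactly the approach taken by Gaubert and Vigeral in the cited source. The delicate steps all check: the detour through $c_\mylambda(T(x))$ replaces the unbounded error $(1-\mylambda)\,d(b,T(x_\mylambda))$ with the vanishing $(1-\mylambda)\,d(b,T(x))$; the lower bound $d(x,x_\mylambda)\ge d(b,x_\mylambda)-d(b,x)\ge \mylambda\rho(T)/(1-\mylambda)-d(b,x)$ feeds in with a negative sign to produce $-\mylambda\rho(T)$; subnets handle the (possibly non-metrizable) compactification correctly, and the final cofinality remark guarantees the parameter still tends to $1$ along the subnet; and the observation that $h(T^n x)\to-\infty$ while every $h_y$ is bounded below by $-d(b,y)$ correctly certifies that the limit $h$ is a genuine horofunction rather than an internal point $h_y$.
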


\section{Surjective displacement} \label{sec:surjDisp}

The following theorem characterizes when nonexpansive maps on a finite dimensional normed space have surjective displacement. The equivalence of conditions (2), (3), and (4) was already observed by Hochart in \cite[Corollary 4]{Hochart19}. 

\begin{theorem} \label{thm:char}
Let $X$ be a finite dimensional real Banach space and suppose $f: X \rightarrow X$ is nonexpansive. The following are equivalent:
\begin{enumerate}
\item There is no horofunction $h$ and constant $c \in \R$ such that $h(f(x)) \le h(x) + c$ for all $x \in X$.
\item The approximate fixed point sets $\Fix_\delta(f)$ are bounded for all $\delta > 0$.
\item $f+u$ has a nonempty and bounded set of fixed points for all $u\in X$.
\item $f-\id$ maps onto $X$. 
\end{enumerate}
\end{theorem}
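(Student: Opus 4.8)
The plan is to establish the cycle of implications $(1) \Rightarrow (2) \Rightarrow (3) \Rightarrow (4) \Rightarrow (1)$. As noted just before the statement, the equivalence of $(2)$, $(3)$, $(4)$ is essentially known, so the genuinely new content is connecting them to the horofunction condition $(1)$; accordingly I would route the argument so that horofunctions enter only in $(1) \Rightarrow (2)$ and in $(4) \Rightarrow (1)$, keeping the rest elementary.

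\emph{$(1) \Rightarrow (2)$, by contraposition.} Suppose some $\Fix_\delta(f)$ is unbounded and pick $x_n \in \Fix_\delta(f)$ with $\|x_n\| \to \infty$. The functions $h_{x_n}(x) = \|x - x_n\| - \|x_n\|$ lie in the metric compactification of $X$, which is compact and, since $X$ is finite dimensional hence proper, metrizable; passing to a subsequence, $h_{x_n} \to h$ pointwise, and because $\|x_n\| \to \infty$ the limit $h$ is a horofunction. For a fixed $x \in X$, nonexpansiveness together with $\|f(x_n) - x_n\| \le \delta$ gives $\|f(x) - x_n\| \le \|x - x_n\| + \delta$, hence $h_{x_n}(f(x)) \le h_{x_n}(x) + \delta$; letting $n \to \infty$ yields $h(f(x)) \le h(x) + \delta$ for all $x$, contradicting $(1)$ with $c = \delta$.

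\emph{$(2) \Rightarrow (3)$, then $(3) \Rightarrow (4)$.} Fix $u \in X$ and put $g = f + u$, which is nonexpansive and satisfies $\Fix_\delta(g) \subseteq \Fix_{\delta + \|u\|}(f)$, so all approximate fixed point sets of $g$ are bounded; the set $\Fix(g)$ is then automatically bounded, being contained in $\Fix_1(g)$, and it remains only to show it is nonempty. If $g(0) = 0$ we are done; otherwise, for $t \in (0,1)$ the contraction $(1-t)g$ has a unique fixed point $x_t$, so $g(x_t) = (1-t)^{-1}x_t$ and therefore $\|g(x_t) - x_t\| = \tfrac{t}{1-t}\|x_t\|$. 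Comparing $\|g(x_t)\| = (1-t)^{-1}\|x_t\|$ with $\|g(x_t)\| \le \|x_t\| + \|g(0)\|$ gives $\tfrac{t}{1-t}\|x_t\| \le \|g(0)\|$, so $x_t \in \Fix_{\|g(0)\|}(g)$ for every $t \in (0,1)$; this set is bounded, so along any sequence $t_k \to 0^+$ a subsequence of $(x_{t_k})$ converges to some $x^*$, and continuity of $g$ forces $g(x^*) = x^*$. (Alternatively one can rule out $\rho(g) > 0$ via Theorem \ref{thm:GaVi}, but the contraction argument is self-contained.) The implication $(3) \Rightarrow (4)$ is then immediate, since a fixed point of $f + u$ is precisely a solution of $(f - \id)(x) = -u$, and $u$ ranges over all of $X$.

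\emph{$(4) \Rightarrow (1)$, by contraposition.} Given a horofunction $h$ and constant $c$ with $h(f(x)) \le h(x) + c$ for all $x$, Lemma \ref{lem:deepdirection} provides a unit vector $v$ with $h(x + tv) = h(x) - t$ for all $x \in X$ and $t \in \R$. If $f - \id$ were onto, choose $s > \max\{c,0\}$ and a point $x$ with $(f - \id)(x) = -sv$; then $h(f(x)) = h(x - sv) = h(x) + s > h(x) + c$, contradicting the hypothesis, so $f - \id$ is not onto. I expect the one real obstacle for a self-contained writeup to be the step $(2) \Rightarrow (3)$ — upgrading bounded approximate fixed point sets to an actual fixed point — where finite-dimensional compactness does genuine work beyond its role in the metric-compactification argument; everything else is bookkeeping with the triangle inequality and the definitions.
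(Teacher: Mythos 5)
Your proof is correct and establishes the same cycle $(1)\Rightarrow(2)\Rightarrow(3)\Rightarrow(4)\Rightarrow(1)$ as the paper, with horofunctions entering in exactly the same two places: the compactness argument in $(1)\Rightarrow(2)$ and Lemma~\ref{lem:deepdirection} in $(4)\Rightarrow(1)$. The one place you genuinely diverge is $(2)\Rightarrow(3)$. The paper defines the orbit $x_0=0$, $x_{k+1}=f(x_k)+u$, bounds $\|f(x_k)-x_k\|$ to place the whole orbit in a bounded approximate fixed point set, and then cites a known result (bounded orbit of a nonexpansive map on a finite dimensional space implies a fixed point) to finish. You instead run a Browder/Halpern-style argument: the contractions $(1-t)(f+u)$ have unique fixed points $x_t$, the identity $g(x_t)=(1-t)^{-1}x_t$ together with nonexpansiveness against the base point $0$ gives $\tfrac{t}{1-t}\|x_t\|\le\|g(0)\|$, so the $x_t$ live in a fixed bounded approximate fixed point set, and a convergent subsequence as $t\to 0^+$ yields a fixed point since $\|g(x_t)-x_t\|\to 0$. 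This is self-contained where the paper's proof leans on a citation, and it is essentially a direct proof of the fact the paper invokes; the trade-off is a slightly longer argument. Your $(4)\Rightarrow(1)$ is also spelled out a bit more explicitly than the paper's (choosing $s>\max\{c,0\}$ and exhibiting the contradiction for the given $c$), but the underlying idea is identical.
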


\begin{proof}
(1) $\Rightarrow$ (2). Suppose $F_\delta(f)$ is not bounded for some $\delta > 0$. Then there is a sequence $x_k$ such that $\|x_k \| \rightarrow \infty$ while $\|f(x_k) - x_k \| \le \delta$ for all $k \ge 0$.  Replacing $x_k$ with a subsequence if necessary, we may assume that the horofunction $h(x) = \lim_{k \rightarrow \infty} \|x - x_k\| - \|x_k\|$ is defined on $X$. Then
\begin{align*}
h(f(x)) &= \lim_{k \rightarrow \infty} \|f(x) - x_k\| - \|x_k\| & \\
&\le \liminf_{k \rightarrow \infty} \|f(x) - f(x_k)\| + \|f(x_k) - x_k\| - \|x_k\| & (\text{triangle inequality})\\
&\le \liminf_{k \rightarrow \infty} \|x - x_k \| + \delta - \|x_k\| & (\text{by nonexpansiveness})\\
&= h(x) + \delta &
\end{align*}
for all $x \in X$.  This contradicts (1).

(2) $\Rightarrow$ (3). Choose any $u \in X$. Let $x_k$ be defined recursively by $x_0=0$ and $x_{k+1} = f(x_k)+u$.  Since $f+u$ is nonexpansive, 
\begin{align*}
\|f(x_k) - x_k \| &\le \|f(x_k) + u - x_k\| + \|u\| & (\text{triangle inequality})\\
&\le \|f(0) + u\| + \|u\| & (\text{by nonexpansiveness})
\end{align*} 
for all $k$. Let $\delta = \|f(0) + u \| + \|u\|$.  Since the set $\Fix_\delta(f)$ is bounded, it follows that $x_k$ is a bounded orbit of $f+u$, and therefore $f+u$ must have a fixed point \cite[Proposition 3.2.4]{LemmensNussbaum}. The fixed point set for $f+u$ must also be bounded because it is contained in $\Fix_\delta(f)$. 

(3) $\Rightarrow$ (4). This is obvious since $-u$ is in the range of $f-\id$ if and only if $f+u$ has a fixed point.  

(4) $\Rightarrow$ (1). Consider any horofunction $h$ on $X$. By Lemma \ref{lem:deepdirection}, there is a $v \in X$ such that $h(x+\mylambda v) = h(x)-\mylambda$ for all $\mylambda \in \R$ and $x \in X$.  Since $f-\id$ is onto, we can choose $x \in X$ such that $f(x) = x+\mylambda v$.  Then $h(f(x)) = h(x) - \mylambda$, which proves (1). 
\end{proof}

In the rest of this section, we will consider necessary and sufficient conditions on any (finite or infinite dimensional) Banach space to have surjective displacement. Before proving the main result of the section, we need the following minor results.   
\begin{lemma} \label{lem:monotone}
Suppose that $X$ is a real Banach space and $f: X \rightarrow X$ is nonexpansive. For any $x \in X$ with $\|x\|=1$, and any $x^* \in J(x)$, the real function $\mylambda \mapsto \inner{f(\mylambda x) - \mylambda x, x^* }$ is monotone decreasing.  Consequently, $\lim_{\mylambda \rightarrow \infty}  \inner{f(\mylambda x) - \mylambda x, x^* }$ is either $-\infty$ or it converges to a finite value.  
\end{lemma}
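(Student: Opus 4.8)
The plan is to reduce the statement to a one-line estimate built from the defining properties of the duality map together with nonexpansiveness. Since $\|x\|=1$ and $x^* \in J(x)$, we have $\|x^*\|_*=1$ and $\inner{x,x^*}=1$, so the function in question can be rewritten as $g(\mylambda) := \inner{f(\mylambda x)-\mylambda x, x^*} = \inner{f(\mylambda x),x^*} - \mylambda$. To show $g$ is monotone decreasing I would take any $\mylambda < \mymu$ and compute $g(\mymu)-g(\mylambda) = \inner{f(\mymu x)-f(\mylambda x),x^*} - (\mymu-\mylambda)$. Bounding the dual pairing via $\inner{f(\mymu x)-f(\mylambda x),x^*} \le \|f(\mymu x)-f(\mylambda x)\|\,\|x^*\|_*$ and then invoking nonexpansiveness, $\|f(\mymu x)-f(\mylambda x)\| \le \|\mymu x - \mylambda x\| = \mymu-\mylambda$, gives $g(\mymu)-g(\mylambda) \le 0$, which is exactly monotonicity.

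For the final assertion I would simply invoke the elementary monotone convergence fact for real functions of a real variable: a monotone decreasing function $\R \to \R$ is either unbounded below, in which case $\lim_{\mylambda\to\infty} g(\mylambda) = -\infty$, or bounded below, in which case it converges to its infimum as $\mylambda \to \infty$. This requires nothing beyond the definition of a limit and the monotonicity just established.

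There is essentially no obstacle here. The only inputs are the Hölder-type bound $\inner{y,x^*} \le \|y\|\,\|x^*\|_*$ for the dual pairing and the nonexpansiveness inequality; everything else is bookkeeping. The one point worth flagging is that the argument uses only $\|x^*\|_*=1$ and $\inner{x,x^*}=1$, both of which follow from $x^* \in J(x)$ and $\|x\|=1$, so no additional structure on $X$ (smoothness, strict convexity, reflexivity, finite dimension) is needed, and the monotonicity in fact holds verbatim on all of $\R$.
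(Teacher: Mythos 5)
Your proof is correct and follows essentially the same route as the paper: both establish $\inner{f(\mylambda_2 x) - f(\mylambda_1 x), x^*} \le \|\mylambda_2 x - \mylambda_1 x\| = \inner{\mylambda_2 x - \mylambda_1 x, x^*}$ from $\|x^*\|_*=1$, nonexpansiveness, and $x^*\in J(x)$, then rearrange. The only difference is cosmetic bookkeeping (rewriting $g(\mylambda)=\inner{f(\mylambda x),x^*}-\mylambda$ and spelling out the elementary monotone-limit fact, which the paper leaves implicit).
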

\begin{proof}
Consider $\mylambda_1 < \mylambda_2$ and observe that 
\begin{align*}
\inner{f(\mylambda_2 x) - f(\mylambda_1 x), x^*} &\le \|f(\mylambda_2 x) - f(\mylambda_1 x)\| & \text{ (since } \|x^*\|_* = 1)\\
 &\le \|\mylambda_2 x - \mylambda_1 x\| & \text{ (by nonexpansiveness)}\\
 &= \inner{\mylambda_2 x - \mylambda_1 x, x^*}. & \text{ (since } t_2 > t_1 \text{ and } x^* \in J(x))
\end{align*}
Rearranging terms, we have $\inner{f(\mylambda_2x) - \mylambda_2 x, x^*} \le \inner{f(\mylambda_1 x) - \mylambda_1 x, x^*}$, as claimed.
\end{proof}

The next lemma is a semidifferentiability property of Banach space norms that is not new, but we could not find a reference. Before stating the result, we review some terminology about convex functions.  Let $X$ be a real Banach space and suppose that $f: X \rightarrow \R$ is a convex function.  An $x^* \in X^*$ is called a \emph{subgradient} of $f$ at $x$ if $f(z) - f(x) \ge \inner{z-x,  x^*}$
for all $z \in X$. The set of all subgradients at $x$ is called the \emph{subdifferential} of $f$ at $x$. For any $x \in X$ with $\|x\|=1$, the subdifferential for the norm at $x$ is precisely $J(x)$. 

\begin{lemma} \label{lem:subgradient}
Let $X$ be a real Banach space and let $x, y \in X$ with $\|x\|=1$.  Then there exists $x^* \in J(x)$ such that 
\begin{equation} \label{eq:subgradient}
\lim_{\mylambda \rightarrow \infty} \|\mylambda x - y\| - \inner{\mylambda x - y, x^*} = 0.
\end{equation}
\end{lemma}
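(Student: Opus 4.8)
The plan is to obtain $x^*$ as a Hahn--Banach extension of a linear functional defined on the line $\R y$, dominated by a sublinear functional that records the asymptotic behaviour of the norm along the ray $\mylambda \mapsto \mylambda x$.

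First I would check that the limit $L := \lim_{\mylambda \to \infty}\bigl(\|\mylambda x - y\| - \mylambda\bigr)$ exists. The map $\mylambda \mapsto \|\mylambda x - y\| - \mylambda$ is convex (a convex function minus an affine one), and since $\|x\|=1$ the triangle inequality gives $\bigl|\,\|\mylambda x - y\| - \mylambda\,\bigr| \le \|y\|$ for $\mylambda \ge 0$; a convex function bounded above on a half-line is nonincreasing there, so the limit $L \in [-\|y\|,\|y\|]$ exists. For any $x^* \in J(x)$ one has $\inner{\mylambda x - y, x^*} = \mylambda - \inner{y,x^*}$ and $\|\mylambda x - y\| - \inner{\mylambda x - y, x^*} \ge 0$, so \eqref{eq:subgradient} will follow as soon as we produce $x^* \in J(x)$ with $\inner{y,x^*} = -L$.

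Next, define $p : X \to \R$ by $p(z) = \lim_{\mylambda \to \infty}\bigl(\|\mylambda x + z\| - \mylambda\bigr)$; the same convexity argument shows this limit exists and that $|p(z)| \le \|z\|$. I would then verify that $p$ is sublinear: positive homogeneity follows from the substitution $\mylambda \mapsto c\mylambda$, and subadditivity from writing $\mylambda x + z_1 + z_2 = \bigl(\tfrac{\mylambda}{2}x + z_1\bigr) + \bigl(\tfrac{\mylambda}{2}x + z_2\bigr)$, applying the triangle inequality, and letting $\mylambda \to \infty$. Two values of $p$ are decisive: $p(\lambda x) = \lambda$ for every real $\lambda$ (because $|\mylambda + \lambda| - \mylambda = \lambda$ once $\mylambda > -\lambda$), and $p(-y) = L$. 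Now define a linear functional on $\R y$ by $\mymu y \mapsto -\mymu\,p(-y)$; this is dominated by $p$ on $\R y$ since $p(y) + p(-y) \ge p(0) = 0$, so Hahn--Banach extends it to some $x^* \in X^*$ with $x^* \le p$. From $\inner{z,x^*} \le p(z) \le \|z\|$ for all $z \in X$ we get $\|x^*\|_* \le 1$; from $\inner{x,x^*} \le p(x) = 1$ together with $\inner{-x,x^*} \le p(-x) = -1$ we get $\inner{x,x^*} = 1$, hence $x^* \in J(x)$; and $\inner{y,x^*} = -p(-y) = -L$. Therefore $\|\mylambda x - y\| - \inner{\mylambda x - y, x^*} = \bigl(\|\mylambda x - y\| - \mylambda\bigr) - L \to 0$, which is \eqref{eq:subgradient}.

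The step with genuine content is the construction of $x^*$: in a non-reflexive space one cannot simply pass to a strong limit of subgradients $x^*_\mylambda \in J(\mylambda x - y)$, so one needs the Hahn--Banach argument above (or, equivalently, weak-$*$ compactness of $B_1^*$ via Banach--Alaoglu applied to the net $(x^*_\mylambda)_{\mylambda\to\infty}$: there one shows $\inner{x,x^*_\mylambda}\to 1$ directly, and then pins $\inner{y,x^*}$ to $-L$ from above using $x^*_\mylambda \in J(\mylambda x - y)$ with $\inner{x,x^*_\mylambda}\le 1$, and from below using $\|x^*\|_* \le 1$). I expect this to be the only real obstacle; the existence of $L$ and the sublinearity of $p$ are routine.
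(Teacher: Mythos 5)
Your proof is correct, and it follows a genuinely different route from the paper's. The paper first handles the finite-dimensional case by quoting Rockafellar's formula for the one-sided directional derivative of a convex function, $\lim_{\epsilon\to 0^+}\frac{\|x-\epsilon y\|-\|x\|}{\epsilon}=\sup_{x^*\in J(x)}\inner{-y,x^*}$, and using compactness of $J(x)$ to realize the supremum; it then passes to the infinite-dimensional case by restricting to $\operatorname{span}\{x,y\}$ and extending the resulting functional by Hahn--Banach. You instead construct the sublinear functional $p(z)=\lim_{\mylambda\to\infty}\bigl(\|\mylambda x+z\|-\mylambda\bigr)$ (which, up to the sign of its argument, is exactly the Busemann horofunction associated with the ray $\mylambda x$), verify $p(\lambda x)=\lambda$ and $p(-y)=L$, dominate a linear functional on $\R y$ by $p$, and extend it by Hahn--Banach; the inequalities $x^*\le p$ then simultaneously pin down $x^*\in J(x)$ and $\inner{y,x^*}=-L$. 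Your argument is more self-contained (no citation to Rockafellar, no separate finite-dimensional step) and works uniformly in all Banach spaces; what the paper's version buys is brevity, since it can lean on a standard reference and a two-line reduction. One could also view your $p$ as the recession-style limit the paper is implicitly computing — you have simply made the sublinearity explicit enough that Hahn--Banach applies directly, rather than routing through subdifferential compactness.
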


\begin{proof}
If $X$ is finite dimensional, then \cite[Theorem 23.4]{Rockafellar} says that  
$$\lim_{\epsilon \rightarrow 0^+} \frac{\|x-\epsilon y\|- \|x\|}{\epsilon} = \sup_{x^* \in J(x)} \inner{-y,x^*}$$
since $J(x)$ is the subdifferential of the norm at $x$. 
Furthermore $J(x)$ is compact, so there is an $x^* \in J(x)$ where the supremum is attained, and then
$$\lim_{\epsilon \rightarrow 0^+} \frac{\|x-\epsilon y\|- \|x\|}{\epsilon} = -\inner{y,x^*}.$$
By replacing $\epsilon$ with $\mylambda^{-1}$, we have \eqref{eq:subgradient}. The infinite dimensional version follows by first applying the result to the finite dimensional subspace spanned by $x$ and $y$, and then extending the linear functional $x^*$ to all of $X$ using the Hahn-Banach theorem.
\end{proof}

For any nonzero $x \in X$, let $W_x = \{w \in X : \sup_{x^* \in J(x)} \inner{w,x^*} < 0 \}$.  Note that each $W_x$ is a nonempty open convex cone in $X$.  The following result says that if certain limits are bounded below, then the range of the displacement map does not intersect a translate of $W_x$ for some $x$. 
\begin{lemma} \label{lem:avoidedCone}
Let $X$ be a real Banach space and let $f: X \rightarrow X$ be nonexpansive. Let $x \in X$ with $\|x \| = 1$.
If $\lim_{\mylambda \rightarrow \infty} \inner{f(\mylambda x) - \mylambda x, x^*} \ge c$
for all $x^* \in J(x)$, then $\range(f-\id)$ does not intersect the open affine cone $W_x + cx$.
\end{lemma}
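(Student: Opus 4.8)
The plan is to argue by contradiction. Suppose there is a point $z \in X$ with $f(z) - z \in W_x + cx$. The first step is to unpack what this membership means in terms of the duality map. Every $x^* \in J(x)$ satisfies $\|x^*\|_* = \|x\| = 1$ and $\inner{x,x^*} = 1$, so $\inner{cx, x^*} = c$ for all $x^* \in J(x)$; hence $f(z) - z - cx \in W_x$ is equivalent to
$$\sup_{x^* \in J(x)} \inner{f(z)-z, x^*} < c.$$
So to reach a contradiction it suffices to exhibit a \emph{single} $x^* \in J(x)$ with $\inner{f(z)-z,x^*} \ge c$.

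The key tool is Lemma \ref{lem:subgradient}, applied with $y = z$: it produces $x^* \in J(x)$ such that $\|\mylambda x - z\| - \inner{\mylambda x - z, x^*} \to 0$ as $\mylambda \to \infty$. Fixing this $x^*$, I would then use nonexpansiveness together with $\|x^*\|_* = 1$ to get, for every $\mylambda$,
$$\inner{f(\mylambda x) - f(z), x^*} \le \|f(\mylambda x) - f(z)\| \le \|\mylambda x - z\|.$$
Rewriting the left-hand side by adding and subtracting, and using $\inner{\mylambda x, x^*} = \mylambda$, gives
$$\inner{f(\mylambda x) - \mylambda x, x^*} - \inner{f(z) - z, x^*} = \inner{f(\mylambda x) - f(z), x^*} - \inner{\mylambda x - z, x^*} \le \|\mylambda x - z\| - \inner{\mylambda x - z, x^*}.$$

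Finally I would let $\mylambda \to \infty$. By Lemma \ref{lem:monotone} the map $\mylambda \mapsto \inner{f(\mylambda x) - \mylambda x, x^*}$ is monotone decreasing, so its limit exists in $[-\infty, \infty)$, and by hypothesis that limit is at least $c$. The right-hand side of the displayed inequality tends to $0$ by Lemma \ref{lem:subgradient}. Passing to the limit therefore yields $c - \inner{f(z)-z, x^*} \le 0$, i.e. $\inner{f(z)-z,x^*} \ge c$, which contradicts the supremum bound from the first step. I do not expect a genuine obstacle here: once Lemmas \ref{lem:monotone} and \ref{lem:subgradient} are in hand the argument is a direct computation, and the only thing requiring care is bookkeeping — the sign convention defining $W_x$, the identity $\inner{cx,x^*}=c$ on $J(x)$, and choosing the subgradient $x^*$ adapted to the direction $z$ rather than an arbitrary one.
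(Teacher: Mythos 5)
Your proposal is correct and follows essentially the same route as the paper: both apply Lemma \ref{lem:subgradient} to pick the adapted $x^* \in J(x)$, combine nonexpansiveness with $\|x^*\|_* = 1$, and pass to the limit to conclude $\inner{f(y)-y,x^*} \ge c$. The only cosmetic difference is that you frame it as a proof by contradiction (and explicitly cite Lemma \ref{lem:monotone} for existence of the limit, which the paper's hypothesis implicitly presupposes), whereas the paper argues directly that for every $y$ the supremum over $J(x)$ is at least $c$.
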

\begin{proof}
By Lemma \ref{lem:subgradient}, for any $y \in X$ there is an $x^* \in J(x)$ such that 
\begin{align*}
0 &= \lim_{t \rightarrow \infty} \|t x - y \| - \inner{t x - y, x^*} \\
&\ge \limsup_{t \rightarrow \infty} \|f(t x) - f(y) \| - \inner{t x - y, x^*} & (\text{nonexpansiveness}) \\ 
&\ge \limsup_{t \rightarrow \infty} \inner{f(\mylambda x) - f(y), x^*} - \inner{t x - y, x^*} & (\text{since } \|x^*\|_* = 1) \\
&= \lim_{t \rightarrow \infty} \inner{f(tx)-tx,x^*} - \inner{f(y)-y,x^*} \\
&\ge c - \inner{f(y)-y,x^*}.
\end{align*}
This means that $\sup_{x^* \in J(x)} \inner{f(y)-y,x^*} \ge c$ so $f(y)-y \notin W_x + cx$.
\end{proof}

The next observation is also about the range of displacement maps, this time when $f$ is nonexpansive with respect to a uniformly smooth norm.  This lemma can be proved directly from a more general result \cite[Theorem 5.2]{Reich82} which says that the closure of the range of an $m$-accretive operator on a smooth Banach space is always convex. Here we take a different approach using horofunctions.

\begin{lemma} \label{lem:convexRange}
Let $X$ be a uniformly smooth real Banach space and suppose that $f:X \rightarrow X$ is nonexpansive.  The closure of the range of $f - \id$ is convex. 
\end{lemma}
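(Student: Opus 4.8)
The plan is to show that for any $w_0, w_1$ in the closure of $\range(f-\id)$ and any $\mylambda \in (0,1)$, the point $w_\mylambda = (1-\mylambda)w_0 + \mylambda w_1$ also lies in the closure. By approximating, it suffices to treat $w_0, w_1 \in \range(f-\id)$ exactly, say $f(a) - a = w_0$ and $f(b) - b = w_1$. Suppose for contradiction that $w_\mylambda$ is \emph{not} in the closure of the range. Then the map $g := f - w_\mylambda$ is nonexpansive with $\rho(g) = \inf_{y} \|g(y) - y\| = \inf_y \|f(y) - y - w_\mylambda\| > 0$, so Theorem \ref{thm:GaVi} (Gaubert--Vigeral) produces a horofunction $h$ on $X$ with $h(g(x)) \le h(x) - \rho(g)$ for all $x$, i.e.
$$h(f(x) - w_\mylambda) \le h(x) - \rho(g) \qquad \text{for all } x \in X.$$

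The next step is to extract usable information from $h$. Since $X$ is uniformly smooth, it is reflexive, and I expect to invoke Lemma \ref{lem:Gu}: either $h$ is bounded below, or $h(x) = -\inner{x,\nu}$ for some nonzero $\nu \in B_1^*$. The bounded-below case should be dispatched quickly — a horofunction is 1-Lipschitz and convex with $h(0) = 0$, and iterating the displacement inequality along an orbit of $f$ (as in the proof of Theorem \ref{thm:char}, (1)$\Rightarrow$(2)) would force $h$ to be unbounded below, a contradiction. In the linear case $h = -\inner{\cdot,\nu}$, the inequality $h(f(x) - w_\mylambda) \le h(x) - \rho(g)$ becomes
$$-\inner{f(x) - w_\mylambda, \nu} \le -\inner{x,\nu} - \rho(g),$$
that is, $\inner{f(x) - x, \nu} \ge \inner{w_\mylambda, \nu} + \rho(g)$ for all $x \in X$. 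Applying this with $x = a$ and $x = b$ gives $\inner{w_0, \nu} \ge \inner{w_\mylambda,\nu} + \rho(g)$ and $\inner{w_1,\nu} \ge \inner{w_\mylambda,\nu} + \rho(g)$; taking the convex combination $(1-\mylambda)(\cdots) + \mylambda(\cdots)$ yields $\inner{w_\mylambda,\nu} \ge \inner{w_\mylambda,\nu} + \rho(g)$, i.e. $\rho(g) \le 0$, contradicting $\rho(g) > 0$. Hence $w_\mylambda$ is in the closure of the range, and since midpoint-convexity of a closed set gives full convexity, the closure of $\range(f-\id)$ is convex.

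The main obstacle I anticipate is the bounded-below case of Lemma \ref{lem:Gu}, which does not immediately contradict the displacement inequality: an inequality $h(f(x)-w_\mylambda) \le h(x) - \rho(g)$ with $\rho(g) > 0$ and $h$ bounded below is a priori consistent (the orbit could leave every sublevel set of $h$, or $h(x_k)$ could stay large). I would handle this by choosing the orbit carefully — set $x_0 = 0$ and $x_{k+1} = f(x_k) - w_\mylambda$; then $\|x_{k+1} - x_k\| = \|f(x_k) - f(x_{k-1})\| \le \|x_k - x_{k-1}\| \le \|x_1 - x_0\| = \|f(0) - w_\mylambda\|$, so consecutive iterates move a bounded amount, while $h(x_k) \le h(x_0) - k\rho(g) = -k\rho(g) \to -\infty$, directly contradicting that $h$ is bounded below. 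So in fact the bounded-below alternative is vacuous here, and only the linear case needs the convex-combination argument above. The one routine point to be careful about is the initial approximation step reducing closure points to exact range points, but that is standard since $f - \id$ is continuous and the inequalities above are closed conditions.
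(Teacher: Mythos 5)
Your proposal is correct and uses exactly the same machinery as the paper: Theorem \ref{thm:GaVi} to produce a horofunction $h$ with the displacement estimate, the iteration $x_{k+1} = f(x_k) - w_\mylambda$ to rule out the bounded-below case, and Lemma \ref{lem:Gu} to conclude that $h$ is linear, $h = -\inner{\cdot,\nu}$. The only difference is the final packaging. The paper notes that for every $u \notin \overline{\range}(f-\id)$ the functional $\nu$ gives a closed half-space containing $\overline{\range}(f-\id)$ but not $u$, so $\overline{\range}(f-\id)$ is an intersection of closed half-spaces and hence convex, whereas you specialize $u$ to a convex combination $w_\mylambda$ of two range points and evaluate the separating inequality at their preimages $a,b$ to get $\rho \le 0$ directly. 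These two packagings are interchangeable; the paper's is a touch shorter because it never needs the approximation step or the reduction to exact range points, but your version is equally valid and makes the role of the two endpoints explicit.
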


\begin{proof}
Suppose that $u \in X \backslash \overline{\range}(f-\id)$.  Then $\inf_{x \in X} \|f(x)-u - x \| = \rho > 0$. By Theorem \ref{thm:GaVi}, there is a horofunction $h$ on $X$ such that $h(f(x)-u) \le h(x) - \rho$ for all $x \in X$.  This horofunction cannot be bounded below since the iterates $(f-u)^k(0)$ have $h((f-u)^k(0)) \rightarrow -\infty$. So Lemma \ref{lem:Gu} implies that there is a nonzero linear functional $\nu \in B_1^*$ such that $h(x) = -\inner{x,\nu}$ for all $x$. Then 
$$-\inner{f(x)-u, \nu} \le -\inner{x,\nu} - \rho$$
for all $x \in X$.  Equivalently,
$$\inner{f(x)-x, \nu} \ge \inner{u, \nu} + \rho.$$  
Therefore the linear functional $\nu$ separates $u$ from $\overline{\range}(f-\id)$.  This means that $\overline{\range}(f-\id)$ is the intersection of an infinite family of closed half-spaces (one for each $u \notin \overline{\range}(f-\id)$), and as such, it must be a convex set. 
\end{proof}



The last preliminary observation before the main result of this section is the following fixed point theorem \cite[Theorem 2.4]{KaKi06}.  

\begin{theorem}[Kaewcharoen-Kirk] \label{thm:Penot}
Let $X$ be a Banach space which has the fixed point property, let $C$ be a closed, convex subset of $X$, and suppose that $f: C \rightarrow C$ is nonexpansive.  If $\Fix_\delta(f) = \{x \in C : \|f(x) - x\| \le \delta \}$ is bounded and nonempty for some $\delta \ge 0$, then $f$ has a fixed point. 
\end{theorem}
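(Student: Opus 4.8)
The plan is to reduce, through the classical resolvent construction, to a situation covered by the fixed point property; the only real work is to keep the relevant approximate fixed points inside a bounded set.

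\textbf{Reduction and a key observation.} If $\delta = 0$ there is nothing to prove, so assume $\delta > 0$ and fix $p \in \Fix_\delta(f)$. After translating (replace $C$ by $C-p$ and $f$ by $x \mapsto f(x+p)-p$) we may assume $p = 0$, so that $0 \in C$ and $\|f(0)\| \le \delta$. The elementary point to record is that $f$ maps $\Fix_\delta(f)$ into itself: if $\|f(x)-x\| \le \delta$ then $\|f(f(x)) - f(x)\| \le \|f(x)-x\| \le \delta$ by nonexpansiveness. Hence every orbit started inside $\Fix_\delta(f)$ remains in the bounded set $\Fix_\delta(f)$.

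\textbf{A bounded approximate fixed point sequence.} For $t \in (0,1)$ the map $f_t := tf$ sends $C$ into $C$ (since $C$ is convex and $0 \in C$) and is a $t$-Lipschitz self-map of the complete metric space $C$, so it has a unique fixed point $z_t$, that is, $z_t = t f(z_t)$. From $\|z_t\| = t\|f(z_t)\| \le t(\|z_t\| + \|f(0)\|)$ we obtain $\|z_t\| \le \tfrac{t}{1-t}\|f(0)\|$, and therefore
$$\|f(z_t) - z_t\| = (1-t)\|f(z_t)\| \le (1-t)\bigl(\|z_t\| + \|f(0)\|\bigr) \le \|f(0)\| \le \delta .$$
Thus each $z_t$ lies in the bounded set $\Fix_\delta(f)$, so $M := \sup_t \|z_t\| < \infty$, whence $\|f(z_t)\| \le M + \delta$, and plugging this back in gives $\|f(z_t) - z_t\| = (1-t)\|f(z_t)\| \le (1-t)(M+\delta) \to 0$ as $t \to 1^-$. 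Choosing $t_k \uparrow 1$ produces a bounded sequence $w_k := z_{t_k} \in \Fix_\delta(f)$ with $\|f(w_k) - w_k\| \to 0$; in particular $\inf_{x \in C}\|f(x) - x\| = 0$.

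\textbf{Upgrading to a fixed point.} It remains to convert the bounded approximate fixed point sequence (equivalently, the bounded orbit of $0$) into a genuine fixed point, and this is exactly where the fixed point property is used. One cannot simply apply the fixed point property to $\overline{\mathrm{conv}}\,\Fix_\delta(f)$: although $f$ maps $\Fix_\delta(f)$ into itself, nonexpansiveness gives no bound on $\|f(x)-x\|$ when $x$ is merely a convex combination of points of $\Fix_\delta(f)$, so this convex hull need not be $f$-invariant. The natural route is the minimal-invariant-set argument of metric fixed point theory: produce a nonempty closed \emph{bounded} convex $K \subseteq C$ with $f(K) \subseteq K$, and then apply the fixed point property to $f|_K$. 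Such a $K$ would be obtained via Zorn's lemma as a minimal closed convex $f$-invariant set compatible with the bounded orbit; for a minimal such $K$ one has $\overline{\mathrm{conv}}\,f(K) = K$, which is legitimate because $f(K) \subseteq K$ with $K$ closed convex forces $f\bigl(\overline{\mathrm{conv}}\,f(K)\bigr) \subseteq f(K) \subseteq \overline{\mathrm{conv}}\,f(K)$, so $\overline{\mathrm{conv}}\,f(K)$ is again a nonempty closed convex $f$-invariant set. I expect the main obstacle to be precisely this last point: guaranteeing that a \emph{bounded} invariant $K$ exists, i.e. that the relevant chains in the Zorn argument have nonempty intersection. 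In the reflexive case this is immediate from weak compactness of $\overline{\mathrm{conv}}\,\Fix_\delta(f)$; for a general Banach space with the fixed point property one must argue more carefully, and it is plausible that at this step one instead appeals to (or reproves) the fact that a nonexpansive self-map of a closed convex set in a space with the fixed point property that possesses a bounded orbit must have a fixed point.
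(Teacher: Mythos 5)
The paper does not reproduce a proof of this theorem; it merely cites \cite{KaKi06}, so there is no in-paper proof to compare against, and your argument must be judged on its own. Up to the final step it holds up: the resolvent computation is correct, and the observation that every $z_t$ already lies in $\Fix_\delta(f)$ (so that $\sup_t\|z_t\| < \infty$ comes for free) is a nice touch. But the proof stops exactly where the theorem's content lies. You correctly identify that one must exhibit a nonempty, closed, convex, \emph{bounded}, $f$-invariant subset of $C$, and correctly note that $\overline{\operatorname{conv}}\,\Fix_\delta(f)$ does not qualify; however, you then defer the construction to a Zorn's-lemma argument --- which only minimizes among invariant sets one already has and cannot manufacture the first bounded one --- and finally to an unproved claim that a bounded orbit forces a fixed point. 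That last claim is essentially the theorem itself, so as written the argument is circular at the decisive step.

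The missing ingredient is the asymptotic-radius (Lyapunov) construction, and it also makes the resolvent detour unnecessary. You already observed that $f$ maps $\Fix_\delta(f)$ into itself, so after your translation the orbit $x_k := f^k(0)$ stays inside the bounded set $\Fix_\delta(f)$. Define $\varphi : C \to \R$ by $\varphi(x) := \limsup_{k \to \infty}\|x - x_k\|$. This function is finite, $1$-Lipschitz, convex (a decreasing limit of the convex functions $\sup_{k \ge n}\|\cdot - x_k\|$), and coercive since $\varphi(x) \ge \|x\| - \sup_k\|x_k\|$. Crucially, $\varphi\circ f \le \varphi$: shifting the index and using nonexpansiveness, $\varphi(f(x)) = \limsup_k\|f(x) - x_{k+1}\| = \limsup_k\|f(x) - f(x_k)\| \le \limsup_k\|x - x_k\| = \varphi(x)$. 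Hence for any $c > \inf_C\varphi$, the sublevel set $K_c := \{x \in C : \varphi(x) \le c\}$ is nonempty, closed, convex, bounded, and $f$-invariant; the fixed point property applied to $f|_{K_c}$ then produces a fixed point of $f$ in $C$, completing the proof.
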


%

The main result of this section is the following theorem.  Condition \ref{item:SaBounded} in the theorem is sufficient for a nonexpansive map to have surjective displacement when the Banach space has the fixed point property, while condition \ref{item:nuLim} is necessary for surjective displacement on any Banach space.

\begin{theorem} \label{thm:abcd}
Let $f: X \rightarrow X$ be a nonexpansive map on a real Banach space $X$.  Consider the following possible conditions on $f$.  
\begin{enumerate}[(a)]
\item \label{item:SaBounded} The approximate fixed point sets $\Fix_\delta(f)$ are bounded for all $\delta > 0$. 
\item \label{item:fu} $f+u$ has a nonempty and bounded set of fixed points for all $u\in X$.
\item \label{item:surjDisp} $f - \id$ maps onto $X$.  
\item \label{item:denseDisp} The range of $f - \id$ is dense in $X$.
\item \label{item:nuLim} For every $x \in X$ with $\|x \|=1$, there is an $x^* \in J(x)$ such that 
$$\lim_{\mylambda \rightarrow \infty} \inner{f(\mylambda x) - \mylambda x, x^*} = - \infty.$$
\end{enumerate}
For these conditions, the following implications are true. 
\begin{itemize}
\item Condition \ref{item:SaBounded} implies \ref{item:denseDisp}, \ref{item:fu} implies \ref{item:surjDisp}, and \ref{item:denseDisp} implies \ref{item:nuLim}.  
\item If $X$ has the fixed point property, then \ref{item:SaBounded} implies \ref{item:fu}.
\item If $X$ is uniformly smooth, then conditions \ref{item:denseDisp} and \ref{item:nuLim} are equivalent. 
\item If $X$ is a Hilbert space, then conditions \ref{item:fu} and \ref{item:surjDisp} are equivalent. 
\item If $X$ is finite dimensional with a smooth or polyhedral norm, then all five conditions are equivalent.
\end{itemize}
\end{theorem}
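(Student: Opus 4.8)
The plan is to establish the listed implications one at a time, leaning on the preliminary lemmas. I would start with the three unconditional implications. For (b)$\Rightarrow$(c): given $u\in X$, a fixed point $x$ of $f+(-u)$ satisfies $(f-\id)(x)=u$, so $f-\id$ is onto. For (a)$\Rightarrow$(d) I would argue by contrapositive: if $u_0\notin\overline{\range}(f-\id)$ then $\rho:=\inf_x\|f(x)-u_0-x\|>0$, so the nonexpansive map $g:=f-u_0$ has $\rho(g)=\rho>0$, and Theorem \ref{thm:GaVi} gives a horofunction $h$ (with $h(0)=0$) such that $h(g^k(0))\le h(0)-k\rho=-k\rho$; since $h$ is $1$-Lipschitz, $\|g^k(0)\|\ge k\rho\to\infty$, while $\|f(g^k(0))-g^k(0)\|\le\|g^{k+1}(0)-g^k(0)\|+\|u_0\|\le\|g(0)\|+\|u_0\|$ because successive displacements of $g$ are nonincreasing; hence $\Fix_{\|g(0)\|+\|u_0\|}(f)$ contains the unbounded orbit $\{g^k(0)\}$, contradicting (a). For (d)$\Rightarrow$(e), again the contrapositive: if (e) fails there is a unit vector $x$ for which $\phi(x^*):=\lim_{\mylambda\to\infty}\inner{f(\mylambda x)-\mylambda x,x^*}$ is finite for every $x^*\in J(x)$ (the limit exists in $[-\infty,\infty)$ by Lemma \ref{lem:monotone}); $\phi$ is the restriction to the weak$^*$-compact convex set $J(x)$ of an infimum of weak$^*$-continuous linear functionals, hence concave and weak$^*$-upper semicontinuous, so I would argue it is bounded below on $J(x)$, say $\phi\ge c>-\infty$ there; Lemma \ref{lem:avoidedCone} then says $\range(f-\id)$ misses the nonempty open set $W_x+cx$, so (d) fails.

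Next the conditional implications. If $X$ has the fixed point property, then (a)$\Rightarrow$(b): for $u\in X$ set $\delta=\|f(0)+u\|$; then $0\in\Fix_\delta(f+u)$ and $\Fix_\delta(f+u)\subseteq\Fix_{\delta+\|u\|}(f)$ is bounded by (a), so Theorem \ref{thm:Penot} yields a fixed point of $f+u$, whose fixed point set is contained in the bounded set $\Fix_\delta(f+u)$. If $X$ is a Hilbert space, then (c)$\Rightarrow$(b): put $A:=\id-(f+u)$, which is monotone because $f$ is nonexpansive, and note $-A=(f-\id)+u$ is onto since $f-\id$ is; for any $x\in\Fix(f+u)=A^{-1}(0)$ and any $w\in X$, writing $w=A(y)$, monotonicity gives $\inner{w,x}\le\inner{w,y}$, so $\Fix(f+u)$ is weakly bounded, hence bounded; it is nonempty since $-u\in\range(f-\id)$.

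For $X$ uniformly smooth it remains to prove (e)$\Rightarrow$(d); here $X$ is smooth (so $J$ is single-valued) and reflexive, and Lemma \ref{lem:convexRange} makes $\overline{\range}(f-\id)$ closed and convex. If it is not all of $X$, separate $u_0\notin\overline{\range}(f-\id)$ by a functional $\nu$, $\|\nu\|_*=1$, with $\inner{f(x)-x,\nu}\ge\alpha$ for all $x$; by reflexivity $\nu$ attains its norm at a unit vector $x_0$, so $J(x_0)=\{\nu\}$, and applying (e) to $x_0$ gives $\lim_{\mylambda\to\infty}\inner{f(\mylambda x_0)-\mylambda x_0,\nu}=-\infty$, contradicting $\inner{f(\mylambda x_0)-\mylambda x_0,\nu}\ge\alpha$. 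Finally, if $X$ is finite dimensional with a smooth or polyhedral norm, Theorem \ref{thm:char} already gives (a)$\Leftrightarrow$(b)$\Leftrightarrow$(c) (these are its conditions (2),(3),(4)), and combined with (a)$\Rightarrow$(d)$\Rightarrow$(e) it suffices to prove (e)$\Rightarrow$(a); by contrapositive, if $\Fix_\delta(f)$ is unbounded for some $\delta$ then Theorem \ref{thm:char} provides a horofunction $h$ and $c\in\R$ with $h(f(x))\le h(x)+c$, and Theorem \ref{thm:Cormac} makes $h$ a Busemann point, $h(x)=\sup_{\nu\in F^*}\inner{u-x,\nu}$ with $F^*$ a proper face of $B_1^*$ and $\sup_{\nu\in F^*}\inner{u,\nu}=0$. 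Choosing a unit vector $x_0$ in the relative interior of the predual face of $F^*$, so that $J(x_0)=F^*$ (for a polyhedral norm $J$ is constant on $\ri F$; in the smooth case $F^*$ is a single extreme point), one computes $h(\mylambda x_0)=-\mylambda$, whence $\inner{f(\mylambda x_0)-\mylambda x_0,\nu}\ge\inner{u,\nu}-c$ for all $\mylambda\ge0$ and all $\nu\in J(x_0)$, so the limit in (e) is finite for every $\nu\in J(x_0)$ and (e) fails. This closes the cycle and gives equivalence of all five conditions.

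The step I expect to be the main obstacle is the boundedness claim inside (d)$\Rightarrow$(e): showing that a concave, weak$^*$-upper semicontinuous, finite-valued function on the weak$^*$-compact convex set $J(x)$ is automatically bounded below, so that Lemma \ref{lem:avoidedCone} applies with an honest constant. In finite dimensions this is elementary (a concave function finite on a compact convex set is bounded below, e.g.\ by pushing a hypothetical $-\infty$ value to a relative interior point), but in a general Banach space $J(x)$ may have empty relative interior; I would try to exploit that $\phi$ is not merely concave but is the restriction of a positively homogeneous concave weak$^*$-u.s.c.\ functional (a pointwise infimum of linear functionals) and reduce the infimum over $J(x)$ to its extreme points via Krein--Milman and a barycenter argument. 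A secondary technical point to pin down is the face-duality bookkeeping in the polyhedral case, namely that the duality map is constant on the relative interior of the face of $B_1$ predual to $F^*$ and equals $F^*$ there.
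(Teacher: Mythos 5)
Your proof follows essentially the same route as the paper's, step by step: (b)$\Rightarrow$(c) by definition; (a)$\Rightarrow$(d) via Gaubert--Vigeral applied to $g=f-u_0$ with the Fej\'{e}r-monotone displacement bound; (a)$\Rightarrow$(b) via Kaewcharoen--Kirk after the inclusion $\Fix_\delta(f+u)\subseteq\Fix_{\delta+\|u\|}(f)$; (e)$\Rightarrow$(d) for uniformly smooth $X$ via Lemma \ref{lem:convexRange}, Hahn--Banach separation, and reflexivity giving a unit vector whose duality image is the separating functional; and (e)$\Rightarrow$(a) in the finite-dimensional smooth/polyhedral case via Theorem \ref{thm:char} and Walsh's Busemann-point description. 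Your Hilbert-space argument for (c)$\Rightarrow$(b) is packaged as a monotone-operator weak-boundedness argument rather than the paper's direct inner-product expansion of $\|x+y-w\|^2\le\|x-w\|^2$, but the two are the same underlying computation and both are correct.

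The one place where you genuinely part ways with the paper is in (d)$\Rightarrow$(e), and your scrutiny there is well placed. The literal negation of (e) only says that $\phi(x^*):=\lim_{\mylambda\to\infty}\inner{f(\mylambda x)-\mylambda x,x^*}$ is finite for every $x^*\in J(x)$; the paper immediately rewrites this as the stronger statement ``there is a constant $c$ with $\phi\ge c$ on $J(x)$'' (which is what Lemma \ref{lem:avoidedCone} actually requires), without comment. You are right that this passage from pointwise finiteness to a uniform lower bound is the nontrivial step, and that in general Banach spaces it is not a tautology -- $J(x)$ can be a ``large'' weak\mbox{*}-compact convex set with empty relative interior, and weak\mbox{*}-upper semicontinuity plus concavity only control $\phi$ from above, not from below. (In the uniformly smooth case $J(x)$ is a singleton and in the finite-dimensional polyhedral case it is a polytope with finitely many extreme points, so the issue evaporates in every setting where the paper subsequently needs the implication.) I would caution, however, that the Krein--Milman/barycenter fix you sketch is unlikely to close this cleanly: nothing prevents $\phi$ from being unbounded below on $\ext J(x)$, and an infinite barycenter argument runs into exactly the same ``concavity gives lower bounds, not upper bounds'' obstruction. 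A Baire-category attack ($J(x)=\bigcup_n\bigcap_t\{g_t\ge -n\}$ gives a relatively open set where $\phi$ is bounded below) also stalls, because $J(x)$ need not be solid enough to propagate that local bound by the usual convexity trick. So you have correctly located the soft spot, but your proposed repair needs more than Krein--Milman; as it stands the unconditional claim (d)$\Rightarrow$(e) deserves either a sharper argument or a hypothesis (e.g.\ $J(x)$ a polytope, or a singleton) that the paper already has available in all the cases it actually uses.
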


\begin{proof}
\ref{item:SaBounded} $\Rightarrow$ \ref{item:denseDisp}. Suppose by way of contradiction that condition \ref{item:denseDisp} is false.  Then there is a $u \in X\backslash \overline{\range}(f-\id)$ and $\rho = \inf_{x \in X} \|f(x) - u - x \| > 0$. By Theorem \ref{thm:GaVi}, there is a horofunction $h$ on $X$ such that $h(f(x)-u) \le h(x) - \rho$ for all $x \in X$.  Let the sequence $x_k$ be defined recursively by $x_0 = 0$ and $x_{k+1} = f(x_k)-u$.  Then $h(x_k) \le - \rho k$ for all $k \ge 0$.  Since $h$ is 1-Lipschitz, this implies that $\|x_k \| \rightarrow \infty$.  At the same time, for all $k$ we have
\begin{align*}
\|f(x_k)-x_k\| &\le \|f(x_k)-u - x_k \|+\|u\| & \text{(triangle inequality)}\\
&\le \|f(0)-u\|+\|u\|. & \text{(since } f-u \text{ is nonexpansive)} 
\end{align*}
So $x_k$ is an unbounded sequence contained in the set $\Fix_\delta(f)$ with $\delta = \|f(0)-u\|+\|u\|$ which contradicts \ref{item:SaBounded}. 

\ref{item:denseDisp} $\Rightarrow$ \ref{item:nuLim}. Suppose that condition \ref{item:nuLim} fails, that is, there is a constant $c \in \R$ and $x \in X$ with $\|x\|=1$ such that $\lim_{\mylambda \rightarrow \infty} \inner{f(\mylambda x) - \mylambda x, x^*} \ge c$ for all $x^* \in J(x)$. By Lemma \ref{lem:avoidedCone}, the range of $f-\id$ does not contain any elements in the nonempty open set $\{y \in X : \max_{y^* \in J(x)} \inner{y,y^*} < c \}$.  This contradicts \ref{item:denseDisp}. 

\ref{item:SaBounded} $\Rightarrow$ \ref{item:fu}. We assume here that $X$ has the fixed point property. Let $x, u \in X$.  Then $\|f(x)-x\| \le \|f(x)+u-x\|+\|u\|$ by the triangle inequality.  Therefore $\Fix_\delta(f+u) \subseteq \Fix_{(\delta+\|u\|)}(f)$ is bounded for every $\delta \ge 0$. By choosing $\delta$ large enough so that $\Fix_\delta(f+u)$ is nonempty and letting $C = X$, we see that Theorem \ref{thm:Penot} implies that $f+u$ has a fixed point. In addition, the fixed point set $\Fix_0(f+u)$ is bounded because it is contained in $\Fix_{\|u\|}(f)$. 


\ref{item:nuLim} $\Rightarrow$ \ref{item:denseDisp}. Suppose now that $X$ is uniformly smooth.
By Lemma \ref{lem:convexRange}, $\overline{\range}(f-\id)$ is convex.  Suppose there exists $u \in X \backslash \overline{\range}(f-\id)$. By the Hahn-Banach theorem, there is a linear functional $x^* \in X^*$ such that $\inner{f(x)-x,x^*} > \inner{u, x^*}$ for all $x \in X$. We may assume that $\|x^*\|_* = 1$. Since any uniformly smooth Banach space is reflexive, there exists $x \in X$ such that $x^* \in J(x)$, and since $X^*$ is uniformly convex, $x^*$ is the only element of $J(x)$. Then the fact that $\inner{f(\mylambda x)-\mylambda x, x^*} > \inner{u, x^*}$ for all $\mylambda > 0$ contradicts \ref{item:nuLim}.     


It is obvious that \ref{item:fu} implies \ref{item:surjDisp}, so we will just prove the converse when $X$ is a Hilbert space.

\ref{item:surjDisp} $\Rightarrow$ \ref{item:fu}. Let $X$ be a Hilbert space. It suffices to prove that the fixed point set $\Fix_0(f)$ is bounded since the same argument will apply to $f+u$ for any $u$.  Choose any $w \in \Fix_0(f)$ and $y \in X$.  Since $f$ has surjective displacement, there is an $x \in X$ such that $f(x) = x+y$.  Then $\|x+y - w\|^2 = \|f(x)-f(w)\|^2 \le \|x-w\|^2$ by nonexpansiveness.  Using the inner-product to expand the norms in this inequality, and then canceling terms, we find that $\inner{w,y} \ge \inner{x,y} + \tfrac{1}{2}\|y\|^2$.  We can also find a corresponding upper bound $\inner{w,y} \le \inner{x,y} - \tfrac{1}{2}\|y\|^2$ by finding an $x \in X$ such that $f(x) = x-y$. So there is a constant $b_y > 0$, depending only on $y$, such that $|\inner{w,y}| \le b_y$ for all $w \in \Fix_0(f)$.  Thus $\Fix_0(f)$ is weakly bounded which means it must also be strongly bounded \cite[Corollary 2.4]{Brezis}. 

Now suppose that $X$ is finite dimensional and is either smooth or has a polyhedral norm.  By Theorem \ref{thm:char}, conditions \ref{item:SaBounded}, \ref{item:fu}, and \ref{item:surjDisp} are equivalent, and the above results tell us that \ref{item:SaBounded} implies \ref{item:denseDisp} and \ref{item:denseDisp} implies \ref{item:nuLim}.  Therefore, it suffices to prove that \ref{item:nuLim} implies \ref{item:SaBounded}. 

\ref{item:nuLim} $\Rightarrow$ \ref{item:SaBounded}. Suppose that condition \ref{item:SaBounded} is not true.  By Theorem \ref{thm:char}, there is a horofunction $h$ on $X$ and a constant $c \in \R$ such that $h(f(x)) \le h(x) + c$ for all $x \in X$. Since we are assuming that either $X$ is smooth or $X$ has a polyhedral norm, Theorem \ref{thm:Cormac} implies that $h$ is given by \eqref{eq:Cormac} for some proper face $F^*$ of $B_1^*$ and some $u \in X$ such that $\sup_{\nu \in F^*} \inner{u,\nu} = 0$. Also, when $X$ is smooth or has a polyhedral norm, all proper faces of $B^*_1$ are exposed faces. Thus, there is an $x \in X$ with $\|x \| =1$ such that $F^* = J(x)$.   
Now, for any $x^* \in F^*$ and $t > 0$,  
\begin{align*}
\inner{ u - f(\mylambda x), x^*} &\le \sup_{\nu \in F^*} \inner{u-f(\mylambda x),\nu} \\
&= h(f(\mylambda x)) & (\text{by } \eqref{eq:Cormac})\\
&\le h(\mylambda x) + c \\ 
&= \sup_{ \nu \in F^*} \inner{u-\mylambda x, \nu} +c & (\text{by } \eqref{eq:Cormac} \text{ again})\\
&= -\mylambda + c. & (\text{since } J(x) = F^*)
\end{align*}
By rearranging terms, we have $\inner{u, x^*} - c \le \inner{f(t x),x^*} - t = \inner{f(\mylambda x) - \mylambda x, x^*} $. Since this is true for all $x^* \in J(x)$ and $t > 0$, we have contradicted condition \ref{item:nuLim}. 
\end{proof}

\begin{remark}
The implication \ref{item:SaBounded} $\Rightarrow$ \ref{item:denseDisp} appears to already be known. In fact, the following is \cite[Exercise 13.6(a)]{Deimling}: If $F$ is an $m$-accretive operator (also known as a hyperaccretive operator) on a Banach space $X$ such that $\{x \in X : \|F(x)\| \le \alpha\}$ is bounded for all $\alpha$, then $F$ has dense range. For any nonexpansive map $f: X \rightarrow X$, the map $\id  -f$ is $m$-accretive, so this exercise implies that \ref{item:SaBounded} $\Rightarrow$ \ref{item:denseDisp}. If $X$ has the fixed point property, then \ref{item:SaBounded} $\Rightarrow$ \ref{item:surjDisp} is also known. In that case, if $F$ is an $m$-accretive operator such that $\{x \in X: \|F(x)\| \le \alpha\}$ is bounded for all $\alpha$, then \cite[Theorem 6]{ReTo80} implies that $F$ is surjective.
\end{remark}

\begin{remark}
If $X$ is a Hilbert space and $f:X \rightarrow X$ is nonexpansive, then the multivalued map $u \mapsto F_0(f+u)$ is maximally monotone.  Since monotone operators on a Hilbert space are locally bounded on the interior of their domains \cite[Theorem 23.2]{Deimling}, this gives an alternative proof of the equivalence of conditions \ref{item:fu} and \ref{item:surjDisp}.  This also implies that if $f$ has surjective displacement, then the set of $u$ such that $F_0(f+u)$ is single-valued is a dense $G_\delta$ subset of $X$ \cite[Theorem 23.6]{Deimling}. 
\end{remark}

In infinite dimensions, it is easy to construct nonexpansive maps with dense displacement, but not surjective displacement. 

\begin{example} \label{ex:rightshift}
Let $T: \ell_2(\N) \rightarrow \ell_2(\N)$ be the right shift operator 
$$T(x_1,x_2, x_3, \ldots) = (0, x_1, x_2, \ldots).$$ 
If $e_1 = (1,0,0,\ldots)$, then the map $T+e_1$ is an isometry with no fixed points in $\ell_2(\N)$. Thus $T$ does not have surjective displacement. However, $\|T(x) - x\|^2 = 2\|x\|^2 - 2 \inner{T(x),x} > 0$ for all nonzero $x \in \ell_2(\N)$. Therefore, $\inner{T(\mylambda x) - \mylambda x, x} = \mylambda \inner{T(x)-x,x} \rightarrow - \infty$ as $\mylambda \rightarrow \infty$ for all nonzero $x$.  This shows that $T$ has dense displacement by Theorem \ref{thm:abcd}.
\end{example}

Condition \ref{item:SaBounded} is sufficient for a nonexpansive map to have surjective displacement, but the following example shows that it is not necessary in infinite dimensions.  

\begin{example}
Let $X = \ell_2(\N)$, let $\{ e_k \}_{k \in \N}$ denote the standard basis for $\ell_2(\N)$, and let $f: X \rightarrow X$ be the map defined entrywise by
$$f_j(x) = \begin{cases} \tfrac{j}{j+1}(x_j+1) & \text{if } x_j< -1 \\ 0 & \text{if } |x_j|\le 1 \\ \tfrac{j}{j+1}(x_j-1) & \text{if } x_j > 1.\end{cases}$$
Then $f$ is nonexpansive and $f-\id$ is onto.  After all, for any $u \in X$, $|u_j| \le 1$ for all but finitely many $j \in \N$.  Therefore, we can choose $x \in X$ as follows:
$$x_j = \begin{cases}
-(j+1)u_j + j& \text{if } u_j >  1 \\ -u_j & \text{if } |u_j|\le 1 \\ -(j+1)u_j -j& \text{if } u_j < -1,\end{cases}$$
so that $f(x)-x = u$. On the other hand, observe that the approximate fixed point set $F_2(f)$ contains $(k+2) e_k$ for all $k \in \N$. Therefore $f$ has surjective displacement even though the approximate fixed point set $F_2(f)$ is not bounded.
\end{example}

For linear nonexpansive maps, condition \ref{item:surjDisp} implies condition \ref{item:SaBounded} from Theorem \ref{thm:abcd}. 

\begin{lemma}
Suppose $X$ is a real Banach space, $T: X \rightarrow X$ is linear and nonexpansive, and $T$ has surjective displacement. Then all approximate fixed point sets $F_\delta(T)$ are bounded.  
\end{lemma}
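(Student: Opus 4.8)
The plan is to reduce the boundedness of all the approximate fixed point sets to the single fact that $T$ has no nonzero fixed point, and then to rule out nonzero fixed points by a short orbit argument that uses both the linearity of $T$ and the fact that $T-\id$ is onto (not merely dense). So first I would invoke the open mapping theorem: $T-\id$ is a bounded linear surjection of $X$ onto itself, hence open, so there is a constant $c>0$ with $(T-\id)(B_1)\supseteq B_c$. Rescaling, every $z\in X$ admits a $w\in X$ with $(T-\id)w=(T-\id)z$ and $\|w\|\le c^{-1}\|(T-\id)z\|$; since $z-w\in\ker(T-\id)$, the distance from $z$ to $\ker(T-\id)$ is at most $c^{-1}\|(T-\id)z\|$. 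Consequently $F_\delta(T)\subseteq \ker(T-\id)+c^{-1}\delta\,B_1$ for every $\delta>0$, and since $\ker(T-\id)$ is a linear subspace, this set is bounded exactly when $\ker(T-\id)=\{0\}$ (and that subspace is always contained in every $F_\delta(T)$, so the equivalence goes both ways).

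Next I would show $\ker(T-\id)=F_0(T)=\{0\}$. Suppose $Tv=v$. Since $T-\id$ is onto, choose $w$ with $Tw-w=v$. Linearity together with $Tv=v$ gives, by an immediate induction, $T^n w = w+nv$ for all $n\ge 0$. But $T$ is nonexpansive with $T(0)=0$, so $\|T^n w\|\le\|w\|$ for every $n$, which forces $\|w+nv\|\le\|w\|$ for all $n$ and hence $v=0$. Combined with the first step, this shows $F_\delta(T)\subseteq c^{-1}\delta\,B_1$ is bounded for every $\delta>0$.

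I do not expect a serious obstacle here; the only point needing a little care is the correct application of the open mapping theorem, which must be used to produce the \emph{uniform} lower bound $\|(T-\id)z\|\ge c\,\mathrm{dist}(z,\ker(T-\id))$ — this is precisely what upgrades ``$\|(T-\id)z\|$ is small'' to ``$z$ lies near the kernel.'' It is also worth emphasizing that surjectivity of the displacement, rather than just dense displacement, is genuinely used in the second step, since the identity $T^n w=w+nv$ requires $v$ to lie exactly in $\range(T-\id)$. As an alternative to the orbit computation, the triviality of $F_0(T)$ also follows from Theorem~\ref{thm:abcd}, because surjective displacement implies condition \ref{item:nuLim}, which for a linear map rules out any unit fixed vector; but the direct argument above is shorter and self-contained.
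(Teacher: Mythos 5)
Your proof is correct, and while it shares the same overall skeleton as the paper's argument (reduce to showing $\ker(T-\id)=\{0\}$, then use the open mapping/bounded inverse theorem to bound $F_\delta(T)$), the way you establish triviality of the kernel is genuinely different and more self-contained. The paper invokes Theorem~\ref{thm:abcd}, condition~\ref{item:nuLim}: for a nonzero $x$ and $x^*\in J(x)$ the limit $\lim_{t\to\infty}t\inner{T(x)-x,x^*}$ must be $-\infty$, which forces $T(x)\ne x$. That argument rides on the full horofunction machinery behind Theorem~\ref{thm:abcd}. Your orbit computation --- if $Tv=v$ and $(T-\id)w=v$ then $T^n w=w+nv$, contradicting $\|T^n w\|\le\|w\|$ unless $v=0$ --- is elementary, uses only linearity, nonexpansiveness, and the fact that $v$ lies exactly in $\range(T-\id)$, and correctly isolates why genuine surjectivity (not dense range) is needed. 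One small stylistic difference: the paper concludes directly that $T-\id$ is a bijection and writes $F_\delta(T)=(T-\id)^{-1}B_\delta$, whereas you phrase the open mapping estimate as $F_\delta(T)\subseteq\ker(T-\id)+c^{-1}\delta B_1$; both are equivalent once the kernel is trivial, and your formulation has the minor virtue of making transparent that the only possible obstruction to boundedness is a nonzero kernel. Your note that Theorem~\ref{thm:abcd} also covers the kernel step is exactly the route the paper takes.
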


\begin{proof}
For any nonzero $x \in X$, 
$$\lim_{t \rightarrow \infty} \inner{T(tx)-tx,x^*} = \lim_{t \rightarrow \infty} t\inner{T(x)-x,x^*} = -\infty$$
for some $x^* \in J(x)$ by Theorem \ref{thm:abcd}.   Therefore $T(x)-x \ne 0$ and so the kernel of $T-\id$ is $\{0\}$.  This means that $T-\id$ is one-to-one and onto, so it is invertible with a bounded inverse.  Then $F_\delta(T) = (T-\id)^{-1} B_\delta$, so $F_\delta(T)$ is bounded for all $\delta \ge 0$.  
\end{proof}

An interesting corollary of Theorem \ref{thm:abcd} is the following. 

\begin{corollary}
Let $X$ be a real Banach space and suppose that $f$ and $g$ are both nonexpansive maps on $X$. If $X$ is uniformly smooth and $f$ has dense displacement, then $cf+(1-c)g$ is a nonexpansive map with dense displacement for all $0 < c < 1$.  If $f$ has surjective displacement and $X$ is finite dimensional with either a smooth or polyhedral norm, then $c f+ (1-c) g$ is a nonexpansive map with surjective displacement for all $0 < c < 1$.  
\end{corollary}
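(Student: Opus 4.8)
The plan is to reduce both assertions to verifying condition \ref{item:nuLim} of Theorem \ref{thm:abcd} for the map $h := cf + (1-c)g$, and then to invoke the appropriate equivalences in that theorem. First, $h$ is nonexpansive: for $x,y \in X$ convexity of the norm gives $\|h(x)-h(y)\| \le c\|f(x)-f(y)\| + (1-c)\|g(x)-g(y)\| \le \|x-y\|$.

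For the uniformly smooth case, since $X$ is uniformly smooth and $f$ has dense displacement, Theorem \ref{thm:abcd} (equivalence of \ref{item:denseDisp} and \ref{item:nuLim}) shows that $f$ satisfies \ref{item:nuLim}. Fix $x \in X$ with $\|x\|=1$ and let $x^* \in J(x)$ be the functional with $\lim_{\mylambda\to\infty} \inner{f(\mylambda x)-\mylambda x, x^*} = -\infty$ (when $X$ is smooth this $x^*$ is in fact unique). Writing $h(\mylambda x) - \mylambda x = c\,(f(\mylambda x)-\mylambda x) + (1-c)(g(\mylambda x)-\mylambda x)$, we get $\inner{h(\mylambda x)-\mylambda x, x^*} = c\inner{f(\mylambda x)-\mylambda x, x^*} + (1-c)\inner{g(\mylambda x)-\mylambda x, x^*}$. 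By Lemma \ref{lem:monotone} applied to $g$, the function $\mylambda \mapsto \inner{g(\mylambda x)-\mylambda x, x^*}$ is monotone decreasing, hence bounded above by its value at $\mylambda = 0$, namely $\inner{g(0), x^*}$. Since $c > 0$, the first term tends to $-\infty$ while the second stays bounded above, so $\lim_{\mylambda\to\infty}\inner{h(\mylambda x)-\mylambda x, x^*} = -\infty$. Thus $h$ satisfies \ref{item:nuLim}, and since $X$ is uniformly smooth, Theorem \ref{thm:abcd} gives that $h$ has dense displacement.

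For the finite-dimensional smooth-or-polyhedral case, Theorem \ref{thm:abcd} says all five conditions are equivalent, so $f$ having surjective displacement implies $f$ satisfies \ref{item:nuLim}. The identical computation — decomposing $h(\mylambda x)-\mylambda x$ and using Lemma \ref{lem:monotone} to bound the $g$-contribution from above — shows $h$ satisfies \ref{item:nuLim}, using the same functional $x^* \in J(x)$ that works for $f$; here $J(x)$ need not be a singleton, but Lemma \ref{lem:monotone} holds for every element of $J(x)$. Applying Theorem \ref{thm:abcd} once more, $h$ has surjective displacement.

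There is no serious obstacle. The one point to watch is that the term $\inner{g(\mylambda x)-\mylambda x, x^*}$ may itself diverge to $-\infty$, but being monotone decreasing it is at least bounded above, and so cannot interfere with the divergence to $-\infty$ contributed by the $f$-term; everything else is a direct citation of Theorem \ref{thm:abcd}.
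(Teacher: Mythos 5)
Your proof is correct and follows essentially the same route as the paper: reduce both claims to verifying condition (e) of Theorem \ref{thm:abcd} for the convex combination, decompose $\inner{(cf+(1-c)g)(\mylambda x) - \mylambda x, x^*}$ into the $f$- and $g$-contributions, and use Lemma \ref{lem:monotone} to observe that the $g$-term stays bounded above while the $f$-term diverges to $-\infty$. You spell out this decomposition and the role of monotonicity a bit more explicitly than the paper's terse proof, but the underlying argument is identical.
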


\begin{proof}
It is easy to verify that $c f+(1-c)g$ is nonexpansive for all $0 < c<1$.  By Lemma \ref{lem:monotone}, 
$$\lim_{\mylambda \rightarrow \infty} \inner{c f (\mylambda x) + (1-c) g(\mylambda x) - \mylambda x, x^*}$$ 
exists and is contained in $[-\infty,\infty)$ for all $x \in X$ with $\|x \| = 1$ and $x^* \in J(x)$. Suppose $f$ has dense displacement. Then $f$ satisfies condition \ref{item:nuLim} of Theorem \ref{thm:abcd} and so must $cf+(1-c)g$ for all $0 < c <1$.  Then if $X$ is uniformly smooth, it follows that from Theorem \ref{thm:abcd} that $cf+(1-c)g$ has dense displacement.  If $X$ is finite dimensional and smooth or $X$ has a polyhedral norm, then Theorem \ref{thm:abcd} implies that $cf+(1-c)g$ has surjective displacement.
\end{proof}

Some of the implications from Theorem \ref{thm:abcd} may not be best possible. It is not clear whether condition \ref{item:SaBounded} implies surjective displacement on Banach spaces without the fixed point property or whether condition \ref{item:nuLim} implies surjective displacement on every finite dimensional real Banach space. We also don't know whether conditions \ref{item:fu} and \ref{item:surjDisp} are equivalent on Banach spaces that are not Hilbert spaces.   

\section{Polyhedral norm nonexpansive maps} \label{sec:polyhedral}

The following theorem gives a computational procedure involving a finite number of limits that determine whether or not a nonexpansive map on a space with a polyhedral norm has surjective displacement.

\begin{theorem} \label{thm:faces}
Let $X$ be a finite dimensional real Banach space with a polyhedral norm and suppose that $f: X \rightarrow X$ is nonexpansive.  For every face $F$ of the unit ball $B_1$, choose an $x_F$ in the relative interior of $F$ and an $x_F^*$ in the relative interior of the corresponding dual face $F^*$ of $B_1^*$.  Then $f-\id$ maps onto $X$ if and only if 
\begin{equation} \label{eq:facecondition}
lim_{\mylambda \rightarrow \infty} \inner{f(\mylambda x_F) - \mylambda x_F, x_F^* } = -\infty
\end{equation}
for every proper face $F$.  
\end{theorem}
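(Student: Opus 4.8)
The plan is to reduce Theorem \ref{thm:faces} to condition \ref{item:nuLim} of Theorem \ref{thm:abcd}, which for a polyhedral-norm finite-dimensional space is equivalent to surjective displacement. The statement of \ref{item:nuLim} quantifies over \emph{all} unit vectors $x$ and asks for \emph{some} $x^* \in J(x)$, whereas \eqref{eq:facecondition} quantifies only over the finitely many face-representatives $x_F$ and uses the specific relative-interior functional $x_F^*$. So the whole content is to show these two formulations coincide. First I would record the structural fact that for a polyhedral norm the unit sphere is partitioned (up to lower-dimensional overlaps) by the relative interiors of the faces $F$ of $B_1$, and that $J(x)$ depends only on which face's relative interior contains $x$: if $x \in \ri F$ then $J(x) = F^*$, the dual face, and moreover $x^* \in \ri F^*$ iff $F^* = J(x)$ is ``centered'' at $x^*$ in the sense that $x$ lies in the relative interior of the exposed face of $B_1$ cut out by $x^*$. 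This is exactly the exposed-face duality already invoked in the proof of \ref{item:nuLim} $\Rightarrow$ \ref{item:SaBounded} in Theorem \ref{thm:abcd}.

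The forward direction (surjective displacement $\Rightarrow$ \eqref{eq:facecondition}) is the easy one: by Theorem \ref{thm:abcd}\ref{item:nuLim}, for each $x_F$ there exists \emph{some} $x^* \in J(x_F) = F^*$ with $\inner{f(\mylambda x_F) - \mylambda x_F, x^*} \to -\infty$. I must upgrade ``some $x^*$'' to ``the chosen $x_F^* \in \ri F^*$''. The mechanism is Lemma \ref{lem:monotone} together with a convexity/interior argument in the dual face: the function $x^* \mapsto \lim_{\mylambda\to\infty}\inner{f(\mylambda x_F)-\mylambda x_F, x^*}$ is, on $F^*$, a pointwise limit of affine functions that are monotone decreasing in $\mylambda$, hence it is a concave, upper-semicontinuous, $[-\infty,\infty)$-valued function of $x^*$; if it equals $-\infty$ at one point of $F^*$ and is ever finite on $\ri F^*$, concavity forces it to be finite on a dense subset, but an affine-limit concave function that is $-\infty$ somewhere and finite somewhere cannot have the $-\infty$ set meet $\ri F^*$ unless it is identically $-\infty$ there --- actually the cleanest route is: the $-\infty$ set is convex, so if it contains a point of $F^*$ and is not all of $F^*$ it misses $\ri F^*$, giving finiteness at $x_F^*$, a contradiction; hence it contains $\ri F^*$ and in particular $x_F^*$. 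I expect this dual-face interior argument to be the main obstacle, since one has to be careful that the limit can genuinely be $-\infty$ and handle the extended-real concavity correctly.

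For the converse (\eqref{eq:facecondition} $\Rightarrow$ surjective displacement) I verify \ref{item:nuLim} for an arbitrary unit vector $x$. Such $x$ lies in $\ri F$ for a unique proper face $F$, and then $J(x) = F^* = J(x_F)$. I want $\inner{f(\mylambda x)-\mylambda x, x^*}\to -\infty$ for some $x^* \in F^*$; I will take $x^* = x_F^*$ and transfer the hypothesis from $x_F$ to $x$. The transfer uses nonexpansiveness: since $x, x_F \in \ri F$, the difference $x - x_F$ lies in the linear span of $F$ shifted, and crucially $\inner{x - x_F, \nu} = 0$ for every $\nu \in F^*$ (because $\inner{\cdot,\nu}$ is constant $=1$ on $F$, hence constant on its affine hull). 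Combining $\|f(\mylambda x) - f(\mylambda x_F)\| \le \|\mylambda x - \mylambda x_F\| = \mylambda\|x - x_F\|$ with $\|x_F^*\|_* = 1$ gives $\inner{f(\mylambda x) - f(\mylambda x_F), x_F^*} \le \mylambda\|x-x_F\|$, and a symmetric lower bound, so $\inner{f(\mylambda x) - f(\mylambda x_F), x_F^*}$ is bounded by $\mylambda\|x-x_F\|$ in absolute value --- that bound still grows, so instead I should fix $\mylambda$ and compare $f(\mylambda x)$ with $f(\mylambda x_F)$ only after first noting that along the ray the relevant quantity is $\inner{f(\mylambda x) - \mylambda x, x_F^*}$ and $\inner{\mylambda x, x_F^*} = \mylambda = \inner{\mylambda x_F, x_F^*}$ since both $x,x_F \in F$. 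Thus $\inner{f(\mylambda x)-\mylambda x, x_F^*} - \inner{f(\mylambda x_F) - \mylambda x_F, x_F^*} = \inner{f(\mylambda x) - f(\mylambda x_F), x_F^*} \le \|f(\mylambda x) - f(\mylambda x_F)\| \le \mylambda\|x - x_F\|$, which is the wrong direction. The correct fix is to choose, for each fixed $\mylambda$, a point on the \emph{same ray}: actually the honest argument compares $f(\mylambda x)$ with $f(\mu x_F)$ for a matched $\mu$, or better, observes that the hypothesis \eqref{eq:facecondition} at $x_F$ already forces, via Theorem \ref{thm:abcd}\ref{item:denseDisp}$\Rightarrow$\ref{item:nuLim} being an equivalence here, that no horofunction of the Busemann form \eqref{eq:Cormac} associated to $F^*$ satisfies $h(f(x)) \le h(x) + c$; then I run the argument of Theorem \ref{thm:char} (1)$\Leftrightarrow$(4) directly, since by Theorem \ref{thm:Cormac} every horofunction on a polyhedral space is a Busemann point attached to some proper face $F^*$, and \eqref{eq:facecondition} rules out exactly the hypothesis ``$h \circ f \le h + c$'' for that $h$ by the same computation as in the proof of \ref{item:nuLim} $\Rightarrow$ \ref{item:SaBounded}. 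I expect this reconciliation --- matching the face-indexed limits to the Busemann-point horofunctions and checking the translation-direction bookkeeping --- to be where the argument needs the most care.
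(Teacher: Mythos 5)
Your overall strategy is sound and, after the acknowledged false starts are stripped away, lands on a valid proof; but the two directions deserve separate assessments.

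For the forward direction you arrive at the paper's argument, slightly obscured. The correct mechanism is exactly the one you gesture at: because $x_F^* \in \ri F^*$, any given $x^* \in F^*$ can be written $x_F^* = c\,x^* + (1-c)\,y^*$ for some $y^* \in F^*$ and $0<c<1$, and since $\lim_\mylambda \inner{f(\mylambda x_F)-\mylambda x_F, y^*} < \infty$ by Lemma~\ref{lem:monotone}, linearity of the pairing in $\nu$ forces $\lim_\mylambda \inner{f(\mylambda x_F)-\mylambda x_F, x_F^*} = -\infty$. Your ``the $-\infty$ set is convex, so if it is not all of $F^*$ it misses $\ri F^*$'' framing is a detour, and the displayed inference is false as stated: a convex proper subset of $F^*$ can certainly meet $\ri F^*$. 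What actually does the work is the relative-interior ``stretch past'' property, not convexity of the bad set.

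For the converse, you (correctly) abandon the naive nonexpansiveness transfer --- the bound $\mylambda\|x-x_F\|$ grows --- and the ``matched $\mu$'' idea you mention in passing is in fact how the paper's Lemma~\ref{lem:limitLink} resolves this, via the semidifferentiability of the norm (Lemma~\ref{lem:subgradient}), giving $\lim_\mylambda\inner{f(\mylambda x)-\mylambda x, x^*} \le \inner{f(\mymu y)-\mymu y, x^*}$ for every fixed $\mymu$. You instead route through Theorem~\ref{thm:char} and Walsh's Theorem~\ref{thm:Cormac}: assume $f-\id$ is not onto, get a horofunction $h$ with $h\circ f \le h + c$, write $h$ as a Busemann point attached to a proper face $G^*$ of $B_1^*$, take the pre-dual face $G$ with $x_G \in \ri G$ (so $J(x_G)=G^*$ and $h(\mylambda x_G)=-\mylambda$), and read off $\inner{f(\mylambda x_G)-\mylambda x_G, x_G^*}\ge \inner{u,x_G^*}-c$ for all $\mylambda>0$, contradicting \eqref{eq:facecondition}. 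This is a genuine alternative to the paper's route: the paper establishes condition~\ref{item:nuLim} of Theorem~\ref{thm:abcd} by transferring the limit from $x_F$ to an arbitrary $x\in\ri F$ via Lemma~\ref{lem:limitLink}, whereas you bypass that lemma by re-running the Busemann computation already carried out in the proof of \ref{item:nuLim} $\Rightarrow$ \ref{item:SaBounded}. Both are correct; the paper's is more modular (the transfer lemma is reusable and stated in Banach-space generality), while yours is shorter given that the horofunction machinery is already in place, at the cost of repeating that computation. Either way the bookkeeping you flagged --- that the face $F$ appearing in \eqref{eq:facecondition} must be taken to be the pre-dual face $G$ of the $G^*$ attached to the horofunction --- is exactly the point to be careful about, and it works out because the face lattices of $B_1$ and $B_1^*$ are in anti-isomorphic bijection for a polyhedral norm.
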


Of course, Theorem \ref{thm:faces} is only useful as a test for the existence of fixed points if it is possible to calculate the limits in \eqref{eq:facecondition}.  However, given an explicit expression for $f$, this is frequently the case.  Before proving the theorem, we make the following general observation.  

\begin{lemma} \label{lem:limitLink}
Let $X$ be a real Banach space, let $x, y \in X$ with $\|x\| = \|y\| = 1$, and suppose that $J(x) \subseteq J(y)$.  Then for any $x^* \in J(x)$, 
$$\lim_{\mylambda \rightarrow \infty} \inner{f(\mylambda x) -\mylambda x, x^*} \le \lim_{\mylambda \rightarrow \infty} \inner{f(\mylambda y) -\mylambda y, x^*}.$$
In particular, if $J(x) = J(y)$, then the two limits are equal.
\end{lemma}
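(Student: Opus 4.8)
The plan is to compare the two monotone limits furnished by Lemma~\ref{lem:monotone}, exploiting a single structural consequence of the hypothesis $J(x)\subseteq J(y)$: if $\nu\in J(x)$ then $\|\nu\|_*=\|x\|=1$ and $\inner{x,\nu}=1$, and since $\nu\in J(x)\subseteq J(y)$ with $\|y\|=1$ we also get $\inner{y,\nu}=1$; hence
$\inner{\mylambda x-\mymu y,\nu}=\mylambda-\mymu$ for every $\nu\in J(x)$ and all $\mylambda,\mymu>0$. In particular $x^*\in J(y)$, so Lemma~\ref{lem:monotone} applies along both rays and the limits $L_x:=\lim_{\mylambda\to\infty}\inner{f(\mylambda x)-\mylambda x,x^*}$ and $L_y:=\lim_{\mymu\to\infty}\inner{f(\mymu y)-\mymu y,x^*}$ exist in $[-\infty,\infty)$, with $L_y=\inf_{\mymu>0}\inner{f(\mymu y)-\mymu y,x^*}$ by monotonicity.

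The heart of the argument is the chain of inequalities, valid for all $\mylambda,\mymu>0$:
\begin{align*}
\inner{f(\mylambda x)-\mylambda x,x^*}-\inner{f(\mymu y)-\mymu y,x^*}
&=\inner{f(\mylambda x)-f(\mymu y),x^*}-(\mylambda-\mymu)\\
&\le\|f(\mylambda x)-f(\mymu y)\|-(\mylambda-\mymu)\\
&\le\|\mylambda x-\mymu y\|-(\mylambda-\mymu),
\end{align*}
where the equality uses $\inner{\mylambda x-\mymu y,x^*}=\mylambda-\mymu$, the first inequality uses $\|x^*\|_*=1$, and the second uses nonexpansiveness of $f$. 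Fixing $\mymu$ and letting $\mylambda\to\infty$, the left-hand side decreases to $L_x$, so \emph{provided} $\|\mylambda x-\mymu y\|-(\mylambda-\mymu)\to 0$ as $\mylambda\to\infty$, one gets $L_x\le\inner{f(\mymu y)-\mymu y,x^*}$ for every $\mymu>0$; taking the infimum over $\mymu$ then yields $L_x\le L_y$.

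What remains is to justify $\|\mylambda x-\mymu y\|-(\mylambda-\mymu)\to 0$, and this is where Lemma~\ref{lem:subgradient} enters: applied to the unit vector $x$ and the vector $\mymu y$, it produces some $\tilde x^*\in J(x)$ with $\lim_{\mylambda\to\infty}\|\mylambda x-\mymu y\|-\inner{\mylambda x-\mymu y,\tilde x^*}=0$; since $\tilde x^*\in J(x)\subseteq J(y)$, the first paragraph gives $\inner{\mylambda x-\mymu y,\tilde x^*}=\mylambda-\mymu$, which is exactly the claim. I expect this to be the only real obstacle: a naive comparison of $f(\mylambda x)$ with $f(\mylambda y)$ is useless because the rays diverge linearly, $\|\mylambda x-\mylambda y\|=\mylambda\|x-y\|$, and it is precisely the inclusion $J(x)\subseteq J(y)$ — forcing every supporting functional of $x$ to annihilate the relevant differences — that makes $\|\mylambda x-\mymu y\|$ grow like $\mylambda-\mymu$ with vanishing error. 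Finally, for the last assertion, if $J(x)=J(y)$ then $x^*\in J(y)$ and $J(y)=J(x)$, so the same argument with $x$ and $y$ interchanged gives $L_y\le L_x$, whence $L_x=L_y$.
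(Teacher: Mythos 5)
Your proof is correct and follows essentially the same route as the paper: both exploit Lemma~\ref{lem:subgradient} together with the observation that $\inner{\mylambda x-\mymu y,\nu}=\mylambda-\mymu$ for any $\nu\in J(x)\subseteq J(y)$, then chain through nonexpansiveness and the pairing with $x^*$ to obtain $L_x\le\inner{f(\mymu y)-\mymu y,x^*}$ for each $\mymu$, and finally let $\mymu\to\infty$. The only difference is cosmetic: the paper starts the inequality chain from the subgradient limit and descends, whereas you start from the difference of displacements and ascend before invoking the subgradient lemma, but the steps and ingredients are identical.
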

\begin{proof}
By Lemma \ref{lem:subgradient}, for any $\mymu>0$ there is a $z^* \in J(x)$ such that 
\begin{align*}
0 &= \lim_{\mylambda \rightarrow \infty} \|\mylambda x - \mymu y\| - \inner{\mylambda x - \mymu y, z^*}  \\
&= \lim_{\mylambda \rightarrow \infty} \|\mylambda x - \mymu y\| - \mylambda + \mymu & \text{(since } z^* \in J(x) \subseteq J(y)) \\
&\ge \limsup_{\mylambda \rightarrow \infty} \| f(\mylambda x) - f(\mymu y) \| - \mylambda + \mymu & (\text{nonexpansiveness}) \\
&\ge \limsup_{\mylambda \rightarrow \infty} \inner{f(\mylambda x) - f(\mymu y), x^*} - \mylambda + \mymu & (\text{since } \|x^*\|_*=1)\\
&= \lim_{\mylambda \rightarrow \infty} \langle f(\mylambda x) - \mylambda x, x^* \rangle - \inner{f(\mymu y) -\mymu y, x^*}.
\end{align*}
We have shown that $\inner{f(\mymu y) -\mymu y, x^*} \ge \lim_{\mylambda \rightarrow \infty} \langle f(\mylambda x) - \mylambda x, x^* \rangle $ for all $s > 0$. The conclusion follows by letting $s \rightarrow \infty$. 
\end{proof}

\begin{proof}[Proof of Theorem \ref{thm:faces}]
Choose any proper face $F$ of $B_1$. 
If $f - \id$ maps onto $X$, then Theorem \ref{thm:abcd} implies that 
$\lim_{\mylambda \rightarrow \infty} \inner{f(\mylambda x_F) - \mylambda x_F, x^*} = -\infty$
for at least one $x^* \in F^*$. Any $x_F^*$ in the relative interior of $F^*$ can be expressed as a non-trivial convex combination of $x^*$ with some other $y^* \in F^*$, that is, $x_F^* = c x^* + (1-c) y^*$ where $0 < c < 1$.  Since the $\lim_{\mylambda \rightarrow \infty} \inner{f(\mylambda x_F) - \mylambda x_F, y^*}<\infty$ by Lemma \ref{lem:monotone}, it follows that $\lim_{\mylambda \rightarrow \infty} \inner{f(\mylambda x_F) - \mylambda x_F, x_F^*} = - \infty$. This proves that the condition given by \eqref{eq:facecondition} is necessary in order for $f - \id$ to map onto $X$.  

Now, suppose that \eqref{eq:facecondition} holds for one $x_F \in \ri F$. For any other $y_F \in \ri F$, $J(x_F) = J(y_F)$. Then by Lemma \ref{lem:limitLink}
$$\lim_{\mylambda \rightarrow \infty} \inner{f(\mylambda y_F) - \mylambda y_F, x_F^*} =\lim_{t \rightarrow \infty} \inner{ f(t x_F) - t x_F, x_F^*} = - \infty$$ 
for all $y_F \in \ri F$.  Therefore confirming \eqref{eq:facecondition} for one representative $x_F$ in the relative interior of each proper face $F$ is sufficient to confirm it for all $y_F \in \ri F$.  Since every $x \in X$ with $\|x \|=1$ is contained in the relative interior of some face $F$, verifying \eqref{eq:facecondition} for every face $F$ will establish condition \ref{item:nuLim} in Theorem \ref{thm:abcd}. That is sufficient to show that $f$ has surjective displacement.
\end{proof}

One might wonder if checking \eqref{eq:facecondition} for a subset of the faces of $B_1$ might be sufficient to prove that \eqref{eq:facecondition} holds for all proper faces?  For the supremum norm on $\R^n$, this is not the case.  The following example shows that for any proper face $F$ of the unit ball in $(\R^n, \|\cdot \|_\infty)$, there is an $\ell_\infty$ nonexpansive map such that \eqref{eq:facecondition} holds for every face except $F$.  

\begin{example} \label{ex:supnorm}
Let $X = \R^n$ with the supremum norm $\|x\|_\infty$. Let $N = \{1,\ldots,n\}$.
For any subset $I \subseteq N$, let $e_I$ be the vector in $\R^n$ defined by 
$$(e_I)_i = \begin{cases}
1 & \text{if } i \in I \\
0 & \text{otherwise.}
\end{cases}$$
The faces of the unit ball in $X$ are the sets 
$$F_{IJ} = \{x \in B_1: x_i = 1 \text{ if } i \in I, x_i = -1 \text{ if } i \in J \}$$ 
where $I$ and $J$ are disjoint subsets of $N$. Note that $F_{IJ}$ is a proper face if at least one of the sets $I, J$ is nonempty. The corresponding dual faces are 
$$F_{IJ}^* = \{ x \in B_1^* : \sum_{i \in I} x_i - \sum_{j \in J} x_j = 1\}.$$
For each face $F_{IJ}$, let $x_{IJ} = e_I - e_J$ and $x_{IJ}^* = \tfrac{1}{|I|+|J|} (e_I - e_J)$. 
Observe that $x_{IJ} \in \ri(F_{IJ})$ and $x_{IJ}^* \in \ri(F_{IJ}^*)$. 

In order to apply Theorem \ref{thm:faces} to a nonexpansive map $f$ on $X$, we must check whether  
$$\lim_{\mylambda \rightarrow \infty} \inner{f(\mylambda x_{IJ}) - \mylambda x_{IJ}, x_{IJ}^*} = -\infty$$
for every disjoint pair $I, J \subseteq N$ with $I \cup J \ne \varnothing$.  

Fix disjoint sets $K,L \subset N$ such that $K \cup L$ is nonempty.
Let $P_{KL}: \R^n \rightarrow \R^n$ be the map 
$$P_{KL}(x)_i = \begin{cases}
\max \{x_i, 0 \} & \text{if } i \in K \cup L \\
0 & \text{otherwise.} 
\end{cases}$$
Let $D_L:\R^n \rightarrow \R^n$ be the transformation
$$D_L(x)_i = \begin{cases}
-x_i & \text{if } i \in L \\
x_i & \text{otherwise.}
\end{cases}$$
Finally, let $\sigma$ be a permutation of $N$ that is a cyclic permutation of the elements of $K \cup L$ and let $S: \R^n \rightarrow \R^n$ be the permutation operator corresponding to $\sigma$, that is $S(x)_i = x_{\sigma(i)}$. 

Each of the maps $P_{KL}$, $D_L$, and $S$ are nonexpansive on $X$, and therefore so is the composition $f = D_L \circ S \circ P_{KL} \circ D_L$. Observe that $\mylambda x_{KL}$ is a fixed point of $f$ for all $\mylambda > 0$.  Therefore $f$ cannot have surjective displacement.  But every one of the limits \eqref{eq:facecondition} holds, except for the one corresponding to the face $F_{KL}$. 
\end{example}

When $\|\cdot \|$ is either the $\ell_1$ or the $\ell_\infty$ norm on $\R^n$, the condition of Theorem \ref{thm:faces} involves checking $3^n - 1$ limit inequalities.  
Interestingly, according to Kalai's $3^d$ conjecture \cite{Kalai89,SWZ09}, any centrally symmetric $n$-dimensional polytope should have at least $3^n$ nonempty faces.  Thus the number of limits needed to check the conditions of Theorem \ref{thm:faces} for other polyhedral norms on $\R^n$ should always be at least $3^n-1$.   

Unfortunately, there is no way to show that an $\ell_2$-nonexpansive map on $\R^n$ has surjective displacement using a finite set of limits of the type in Theorem \ref{thm:faces}. 
To see why, let $f$ be an orthogonal projection onto a one-dimensional subspace spanned by $v \in \R^n$ with $\|v\|_2 = 1$. For any $x$ that is not a scalar multiple of $v$ (that is, almost every $x$), we have $|\inner{x,v}| < \|x\|_2$, so $\lim_{\mylambda \rightarrow \infty} \inner{f(\mylambda x) - \mylambda x, x} = \mylambda (|\inner{x,v}|^2 - \|x\|_2^2) \rightarrow -\infty$.  This is despite the fact that $f$ has an unbounded set of fixed points.  
In section \ref{sec:recession}, we will give a computable sufficient condition for confirming surjective displacement when $f$ is a nonexpansive map on a smooth finite dimensional normed space.

\section{Perron-Frobenius theory} \label{sec:PF}

Let $N = \{1, \ldots, n\}$. For any subset $I \subseteq N$, let $e_I$ be the vector in $\R^n$ defined by 
$$(e_I)_i = \begin{cases}
1 & \text{if } i \in I \\
0 & \text{otherwise.}
\end{cases}$$
We use $\ge$ to denote the natural partial ordering on $\R^n$: $x \ge y$ if and only if $x_i \ge y_i$ for all $i \in N$. A map $T: \R^n \rightarrow \R^n$ is \emph{order-preserving} if $T(x) \ge T(y)$ whenever $x \ge y$.  $T$ is \emph{additively homogeneous} if $T(x+\mylambda e_N) = T(x) + \mylambda e_N$ for all $x \in \R^n$ and $\mylambda \in \R$ and $T$ is \emph{additively subhomogeneous} if $T(x+\mylambda e_N) \le T(x) + \mylambda e_N$ for all $x \in \R^n$ and $\mylambda \ge 0$.  

If $T: \R^n \rightarrow \R^n$ is order-preserving, then $T$ is nonexpansive with respect to the $\ell_\infty$-norm if and only if it is additively subhomogeneous \cite[Lemma 2.7.2]{LemmensNussbaum}. Such maps are called \emph{subtopical}. Order-preserving maps on $\R^n$ that are additively homogeneous are called \emph{topical}.

\subsection{Surjective displacement for subtopical maps}

The following necessary and sufficient condition for a subtopical map to have surjective displacement only requires computing $2(2^n-1)$ limits as opposed to the $3^n-1$ limits needed by the condition in Theorem \ref{thm:faces} for $\ell_\infty$-nonexpansive maps that are not order-preserving. 

\begin{theorem} \label{thm:subtopical}
Let $T: \R^n \rightarrow \R^n$ be subtopical. Then $T$ has surjective displacement if and only if for every nonempty $I \subseteq N$, 
\begin{equation} \label{eq:subtopical}
\lim_{\mylambda \rightarrow \infty} \inner{T(\mylambda e_I),e_I} - \mylambda |I| = -\infty \text{ and }  \lim_{\mylambda \rightarrow \infty} \inner{T(-\mylambda e_I),e_I} + \mylambda |I| = +\infty.
\end{equation}
\end{theorem}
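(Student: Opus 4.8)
The plan is to deduce Theorem \ref{thm:subtopical} directly from Theorem \ref{thm:faces} applied to $X = (\R^n, \|\cdot\|_\infty)$, exploiting the explicit face structure of the $\ell_\infty$ unit ball recorded in Example \ref{ex:supnorm}. Since $T$ is subtopical it is $\ell_\infty$-nonexpansive by \cite[Lemma 2.7.2]{LemmensNussbaum}, so Theorem \ref{thm:faces} is available. Recall that the proper faces of $B_1$ are the sets $F_{IJ}$ for disjoint $I,J \subseteq N$ with $I \cup J \ne \varnothing$, and that $x_{IJ} := e_I - e_J \in \ri F_{IJ}$ and $x_{IJ}^* := \tfrac{1}{|I|+|J|}(e_I - e_J) \in \ri F_{IJ}^*$ are valid choices of representatives. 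With these choices one has $\inner{T(\mylambda x_{IJ}) - \mylambda x_{IJ}, x_{IJ}^*} = \tfrac{1}{|I|+|J|}\bigl(\inner{T(\mylambda(e_I-e_J)), e_I - e_J} - \mylambda(|I|+|J|)\bigr)$, so condition \eqref{eq:facecondition} for $F_{IJ}$ is precisely $\lim_{\mylambda \to \infty}\inner{T(\mylambda(e_I-e_J)), e_I-e_J} - \mylambda(|I|+|J|) = -\infty$; the special cases $J = \varnothing$ and $I = \varnothing$ are exactly the two families of limits in \eqref{eq:subtopical}. Consequently the necessity direction is immediate: if $T$ has surjective displacement, Theorem \ref{thm:faces} forces \eqref{eq:facecondition} for \emph{every} proper face, in particular for $F_{I\varnothing}$ and $F_{\varnothing I}$, which is \eqref{eq:subtopical}.

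The substance is the converse. Assuming only the $2(2^n-1)$ limits in \eqref{eq:subtopical}, attached to the faces $F_{I\varnothing}$ and $F_{\varnothing I}$, I must recover \eqref{eq:facecondition} for all $3^n - 1$ faces $F_{IJ}$. The idea is to sandwich $T(\mylambda(e_I - e_J))$ between the extreme orbits $T(\mylambda e_I)$ and $T(-\mylambda e_J)$ using order preservation. Split the inner product as $\inner{T(\mylambda(e_I-e_J)), e_I - e_J} - \mylambda(|I|+|J|) = \bigl(\inner{T(\mylambda(e_I-e_J)), e_I} - \mylambda|I|\bigr) + \bigl(-\inner{T(\mylambda(e_I-e_J)), e_J} - \mylambda|J|\bigr)$. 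For $\mylambda > 0$ we have $-\mylambda e_J \le \mylambda(e_I - e_J) \le \mylambda e_I$ coordinatewise, so order preservation gives $T(-\mylambda e_J) \le T(\mylambda(e_I - e_J)) \le T(\mylambda e_I)$; pairing with the nonnegative vectors $e_I$ and $e_J$ yields $\inner{T(\mylambda(e_I-e_J)), e_I} \le \inner{T(\mylambda e_I), e_I}$ and $\inner{T(\mylambda(e_I-e_J)), e_J} \ge \inner{T(-\mylambda e_J), e_J}$. Hence the first bracket is bounded above by $\inner{T(\mylambda e_I), e_I} - \mylambda|I|$, which tends to $-\infty$ by the first half of \eqref{eq:subtopical} when $I \ne \varnothing$ and equals $0$ when $I = \varnothing$; likewise the second bracket is bounded above by $-\inner{T(-\mylambda e_J), e_J} - \mylambda|J|$, which tends to $-\infty$ by the second half of \eqref{eq:subtopical} when $J \ne \varnothing$ and equals $0$ when $J = \varnothing$. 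Since $I \cup J \ne \varnothing$, at least one bracket tends to $-\infty$ while the other stays bounded above, so the sum tends to $-\infty$; this is \eqref{eq:facecondition} for $F_{IJ}$, and Theorem \ref{thm:faces} then gives that $T - \id$ is onto.

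The only real obstacle — the ``hard part'' — is recognizing that the pairing against $e_I - e_J$ must be decomposed into its $e_I$-part and its $e_J$-part, and that each part must be compared against a \emph{different} reference orbit ($T(\mylambda e_I)$ for the first, $T(-\mylambda e_J)$ for the second): a single comparison fails, because $T(\mylambda(e_I-e_J))$ is in general neither $\le$ nor $\ge$ any one $T(\pm\mylambda e_K)$. Everything else is routine bookkeeping: that the chosen representatives lie in $\ri F_{IJ}$ and $\ri F_{IJ}^*$ is already checked in Example \ref{ex:supnorm}, and that the limits in \eqref{eq:subtopical} exist (in $[-\infty,\infty)$, respectively $(-\infty,\infty]$) follows from the monotonicity in Lemma \ref{lem:monotone}. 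I would also note that subhomogeneity of $T$ is used only indirectly, to make $T$ nonexpansive so that Theorem \ref{thm:faces} and Lemma \ref{lem:monotone} apply; the reduction from the $2(2^n-1)$ limits to all $3^n-1$ limits uses nothing beyond order preservation.
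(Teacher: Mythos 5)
Your proposal is correct and follows essentially the same route as the paper: reduce to Theorem \ref{thm:faces} using the explicit face structure from Example \ref{ex:supnorm}, note $x_{IJ} = (|I|+|J|)x_{IJ}^*$, split the pairing against $e_I - e_J$ into its $e_I$- and $e_J$-parts, and sandwich $T(\mylambda(e_I - e_J))$ between $T(-\mylambda e_J)$ and $T(\mylambda e_I)$ via order preservation. The only cosmetic difference is that you make the edge cases $I = \varnothing$ or $J = \varnothing$ explicit, whereas the paper absorbs them silently into the same inequality.
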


\begin{proof}
Since $T$ is subtopical, it is nonexpansive with respect to the supremum norm $\|\cdot \|_\infty$. We use the notation from Example \ref{ex:supnorm} for the faces $F_{IJ}$, dual faces $F_{IJ}^*$ and representative points $x_{IJ} \in \ri F_{IJ}$ and $x_{IJ}^* \in \ri F_{IJ}^*$ of the unit ball $B_1$ in $(\R^n, \|\cdot\|_\infty)$.   
By Theorem \ref{thm:faces}, $T$ has surjective displacement if and only if 
$$\lim_{\mylambda \rightarrow \infty} \inner{T(\mylambda x_{IJ}) - \mylambda x_{IJ}, x_{IJ}^*} = -\infty$$ 
for every disjoint pair $I, J \subseteq N$ with $I \cup J \ne \varnothing$. Since $x_{IJ} = (|I|+|J|) x_{IJ}^*$, the limit above is equivalent to
$$\lim_{\mylambda \rightarrow \infty} \inner{T(\mylambda x_{IJ}), x_{IJ}} - \mylambda (|I| + |J|) = -\infty.$$ 
In order for $T$ to have surjective displacement, it is necessary that both limits in \eqref{eq:subtopical} hold true, as those two limits correspond to the limit condition above for $x_{I \varnothing}$ and $x_{\varnothing I}$. We will now show that the limits in \eqref{eq:subtopical} are also sufficient for $T$ to have surjective displacement.  

Note that $-e_J \le x_{IJ} \le e_I$.  Therefore $T(-\mylambda e_J) \le T(\mylambda x_{IJ}) \le T(\mylambda e_I)$ for all $\mylambda > 0$.  So
\begin{align*}
\inner{T(\mylambda x_{IJ}), x_{IJ}} - \mylambda (|I| + |J|) &= \inner{T(\mylambda x_{IJ}), e_I} - \mylambda |I| -  \inner{T(\mylambda x_{IJ}), e_J} - \mylambda |J| \\
&\le \inner{T(\mylambda e_I), e_I} - \mylambda |I| -  \inner{T(-\mylambda e_J), e_J} - \mylambda |J| \\
&= \left( \inner{T(\mylambda e_I), e_I} - \mylambda |I| \right) - \left( \inner{T(-\mylambda e_J), e_J} + \mylambda |J| \right).
\end{align*}
If \eqref{eq:subtopical} holds for all nonempty $I$, then the inequality above implies that
$$\lim_{\mylambda \rightarrow \infty} \inner{T(\mylambda x_{IJ}), x_{IJ}} - \mylambda (|I| + |J|) = -\infty$$
for all disjoint sets $I, J \subseteq N$ with $I \cup J \ne \varnothing$.  Hence $T$ has surjective displacement by Theorem \ref{thm:faces}. 
\end{proof}

\subsection{Existence of additive eigenvectors for topical maps} 
The results in this subsection are not new, but rather give alternative proofs of some necessary and sufficient conditions from \cite{AkGaHo20} for a topical map $T:\R^n \rightarrow \R^n$ to have the property that $T+u$ has an additive eigenvector for all $u \in \R^n$. The results in \cite{AkGaHo20} were proved using a game theoretic approach involving dominions of players in certain one and two-player games. Here we will show how those results follow from Theorem \ref{thm:faces}.

For $x \in \R^n$, let $\mathsf{t}(x) = \max_{i \in N} x_i$ and $\mathsf{b}(x) = \min_{i \in N} x_i$. If $T: \R^n \rightarrow \R^n$ is topical, then $T$ is nonexpansive with respect to the variation seminorm $\|x \|_\text{var} = \mathsf{t}(x)-\mathsf{b}(x)$ \cite[Section 2.2]{GaGu04}. Note that $\|\cdot \|_\text{var}$ is not a norm on $\R^n$, but it is a norm on the subspace $V_0 = \{ x \in \R^n : \inner{x,e_N} = 0 \}$. The normalized map $f(x) = T(x) - \tfrac{1}{n}\inner{T(x), e_N}e_N$ maps $V_0$ into itself and is nonexpansive with respect to $\|\cdot\|_\text{var}$.  If $f$ has a fixed point $x$ in $V_0$, then $x$ is an \emph{additive eigenvector} of $T$, that is, $T(x) = x + \lambda e_N$ for some $\lambda \in \R$. In fact, $T+u$ has an additive eigenvector if and only if $f+u_0$ with $u_0 = u - \tfrac{1}{n}\inner{u,e_N}e_N$ has a fixed point. Since the variation norm is a polyhedral norm on $V_0$, Theorem \ref{thm:faces} gives a testable necessary and sufficient condition to see if $T + u$ has an additive eigenvector for every $u \in \R^n$. We will see below that the condition ends up being equivalent to the one given in \cite[Theorem 4.2]{AkGaHo20}.  

The proper faces of $B_1$ in $(V_0, \|\cdot\|_\text{var})$ are the sets 
$$F_{IJ} = \{ x \in V_0 \, : \, \|x\|_\text{var} = 1, \, x_i = \mathsf{t}(x) \, \forall i\in I, \, x_j = \mathsf{b}(x) \, \forall j \in J\}$$ 
where $I, J \subset N$ are nonempty and disjoint. 
The dual norm is $\|x \|_\text{var}^* = \tfrac{1}{2} \sum_{i \in N} |x_i|$, and the faces of the dual unit ball are 
$$F_{IJ}^* = \{ x \in V_0 \, : \, \|x\|_\text{var}^* = 1, \textstyle{\sum_{i \in I}} x_i = 1, \textstyle{\sum_{j \in J}} x_j = -1 \}. $$ 
Note that if $x \in F_{IJ}^*$, then $x_i > 0$ for all $i \in I$, $x_i < 0$ for $i \in J$ and $x_i = 0$ for $i \in (I \cup J)^c$.  
Let $x_{IJ}^* = \frac{1}{|I|}e_I - \frac{1}{|J|} e_J$, and let $\delta = \|x_{IJ}^*\|_\text{var} = \left( \frac{1}{|I|} + \frac{1}{|J|} \right).$ By construction, $x_{IJ}^* \in \ri F_{IJ}^*$.
If we let $x_{IJ} = \tfrac{1}{\delta} x_{IJ}^*$, then $x_{IJ} \in \ri F_{IJ}$.  

\begin{lemma} \label{lem:connection}
Let $T: \R^n \rightarrow \R^n$ be topical. Let $f:V_0 \rightarrow V_0$ be the corresponding normalized map $f(x) = T(x)-\tfrac{1}{n} \inner{T(x),e_N}e_N$. Let $I, J$ be disjoint, nonempty subsets of $N$. Then 
\begin{equation} \label{eq:facecondition2}
\lim_{\mylambda \rightarrow \infty} \inner{f(\mylambda x_{IJ}) - \mylambda x_{IJ}, x_{IJ}^*} = -\infty
\end{equation}
if and only if 
\begin{equation} \label{eq:hypergraph}
\lim_{\mylambda \rightarrow \infty} \inner{T(-\mylambda e_{I^c}),e_I } = -\infty \text{ or } \lim_{\mylambda \rightarrow \infty} \inner{T(\mylambda e_{J^c}),e_J} = \infty.
\end{equation}
\end{lemma}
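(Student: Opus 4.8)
The plan is to compute the limit in \eqref{eq:facecondition2} explicitly by squeezing $t\,x_{IJ}$ between a few vectors on which $T$ is easy to control, using only that $T$ is order-preserving and additively homogeneous. First, since $x_{IJ}^* = \tfrac{1}{|I|}e_I - \tfrac{1}{|J|}e_J$ lies in $V_0$, we have $\inner{e_N,x_{IJ}^*}=0$, so the normalizing correction in $f$ is annihilated: $\inner{f(y),x_{IJ}^*}=\inner{T(y),x_{IJ}^*}$ for every $y$. A short computation gives $\inner{x_{IJ},x_{IJ}^*}=\tfrac1\delta\|x_{IJ}^*\|_2^2=1$. Setting $a=\tfrac{t}{\delta|I|}$ and $b=\tfrac{t}{\delta|J|}$, we have $t\,x_{IJ}=a e_I - b e_J$ with $a+b=t$ and $a,b\to\infty$ as $t\to\infty$, so \eqref{eq:facecondition2} is the assertion that $\tfrac{1}{|I|}\inner{T(a e_I - b e_J),e_I}-\tfrac{1}{|J|}\inner{T(a e_I - b e_J),e_J}-t\to-\infty$.

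The next step is to bracket this quantity. For the upper bound, observe that $a e_I \ge t\,x_{IJ}\ge -b e_J$ coordinatewise, so order-preservation gives $\inner{T(a e_I),e_I}\ge\inner{T(t\,x_{IJ}),e_I}$ and $\inner{T(-b e_J),e_J}\le\inner{T(t\,x_{IJ}),e_J}$; rewriting $a e_I = a e_N - a e_{I^c}$ and $-b e_J = -b e_N + b e_{J^c}$ and using additive homogeneity turns these into $\inner{T(-a e_{I^c}),e_I}+a|I|$ and $\inner{T(b e_{J^c}),e_J}-b|J|$. Substituting and cancelling (the $t$, $a$ and $b$ terms collapse because $a+b=t$) yields
\[
\inner{T(t\,x_{IJ}),x_{IJ}^*}-t \;\le\; \tfrac{1}{|I|}\inner{T(-a e_{I^c}),e_I}-\tfrac{1}{|J|}\inner{T(b e_{J^c}),e_J}.
\]
For the matching lower bound, use instead $a e_I - b e_{I^c}=a e_N - t e_{I^c}$, which lies coordinatewise below $t\,x_{IJ}$, and $a e_{J^c} - b e_J = -b e_N + t e_{J^c}$, which lies above it; the identical manipulation gives
\[
\inner{T(t\,x_{IJ}),x_{IJ}^*}-t \;\ge\; \tfrac{1}{|I|}\inner{T(-t e_{I^c}),e_I}-\tfrac{1}{|J|}\inner{T(t e_{J^c}),e_J}.
\]

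Finally, let $t\to\infty$. Because $e_I,e_J\ge 0$ and $T$ is order-preserving, $s\mapsto\inner{T(-s e_{I^c}),e_I}$ is nonincreasing and $s\mapsto\inner{T(s e_{J^c}),e_J}$ is nondecreasing on $[0,\infty)$, so the two limits in \eqref{eq:hypergraph} exist; call them $A\in[-\infty,\infty)$ and $B\in(-\infty,\infty]$. Since $a,b\to\infty$ with $t$ and these functions are monotone, both the upper and lower brackets tend to $\tfrac{1}{|I|}A-\tfrac{1}{|J|}B$ (interpreted as $-\infty$ whenever $A=-\infty$ or $B=+\infty$), so by the squeeze the limit in \eqref{eq:facecondition2} equals this value; alternatively its existence is already guaranteed by Lemma \ref{lem:monotone}. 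Hence that limit is $-\infty$ exactly when $A=-\infty$ or $B=+\infty$, which is precisely condition \eqref{eq:hypergraph}.

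The only real difficulty is the bookkeeping: one must choose comparison vectors that (i) bound $t\,x_{IJ}$ correctly on all three coordinate blocks $I$, $J$, $(I\cup J)^c$, and (ii) become multiples of $e_{I^c}$ or $e_{J^c}$ after subtracting the right multiple of $e_N$, so that exactly the quantities in \eqref{eq:hypergraph} emerge; and one must track the extended-real arithmetic so that the degenerate case $A=-\infty$, $B=+\infty$ is covered (it is handled by the upper bracket alone).
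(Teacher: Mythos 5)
Your proof is correct and follows essentially the same route as the paper: compute $\inner{f(t x_{IJ}) - t x_{IJ}, x_{IJ}^*}$ by dropping the normalizing correction, then bracket $T(t\,x_{IJ})$ above and below against $T$ evaluated at comparison vectors that collapse under additive homogeneity to yield precisely the quantities $\inner{T(-\cdot\, e_{I^c}),e_I}$ and $\inner{T(\cdot\, e_{J^c}),e_J}$, and finish using monotonicity of those two functions. The change of variables $a = t/(\delta|I|)$, $b = t/(\delta|J|)$ with $a+b=t$ is a cosmetic simplification of the paper's bookkeeping; the two brackets you derive are the same inequalities \eqref{eq:lowerConnect} and \eqref{eq:upperConnect} the paper uses.
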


\begin{proof}
Start by observing that 
$$-\tfrac{1}{\delta|J|} e_J \le x_{IJ} \le \tfrac{1}{\delta|I|} e_I.$$
This implies that
$$-e_{I^c} \le x_{IJ} - \tfrac{1}{\delta |I|} e_N \le - \tfrac{1}{\delta |I|}e_{I^c}$$
and
$$\tfrac{1}{\delta |J|} e_{J^c} \le x_{IJ} + \tfrac{1}{\delta |J|} e_N \le e_{J^c}.$$
Also, 
\begin{align*}
\inner{f(\mylambda x_{IJ}) - \mylambda x_{IJ}, x_{IJ}^*} &= \inner{T(\mylambda x_{IJ}),x_{IJ}^*} - t \\
&= \tfrac{1}{|I|}\inner{T(\mylambda x_{IJ}) ,  e_I} - \tfrac{1}{|J|} \inner{T(\mylambda x_{IJ}) , e_J} - t \\
&= \tfrac{1}{|I|} \inner{T(\mylambda x_{IJ} - \tfrac{1}{\delta |I|} e_N), e_I} - \tfrac{1}{|J|} \inner{T(\mylambda x_{IJ}+\tfrac{1}{\delta |J|} e_N), e_J}.
\end{align*}
Combining these facts, we see that 
\begin{equation} \label{eq:lowerConnect}
\inner{f(\mylambda x_{IJ}) - \mylambda x_{IJ}, x_{IJ}^*} \ge \tfrac{1}{|I|} \inner{T(-\mylambda e_{I^c}),e_I} - \tfrac{1}{|J|} \inner{T(\mylambda e_{J^c}),e_J}
\end{equation}
and
\begin{equation} \label{eq:upperConnect}
\inner{f(\mylambda x_{IJ}) - \mylambda x_{IJ}, x_{IJ}^*} \le \tfrac{1}{|I|} \inner{T(-\tfrac{\mylambda}{\delta |I|} e_{I^c}),e_I} - \tfrac{1}{|J|} \inner{T(\tfrac{\mylambda}{\delta |J|} e_{J^c}),e_J}.
\end{equation}
As $T$ is order-preserving, both $\inner{T(-\mylambda e_{I^c}),e_I}$ and $\inner{T(\mylambda e_{J^c}),e_J}$ are monotone functions of $\mylambda$. So both $\lim_{\mylambda \rightarrow \infty} \inner{T(-\mylambda e_{I^c}),e_I}$ and $\lim_{\mylambda \rightarrow \infty} \inner{T(\mylambda e_{J^c}),e_J}$ exist. From this and the last two inequalities, the conclusion follows. 
\end{proof}

The next theorem is equivalent to \cite[Theorem 4.2]{AkGaHo20}. 

\begin{theorem} \label{thm:hypergraph}
Let $T : \R^n \rightarrow \R^n$ be topical.  Then $T+u$ has an additive eigenvector for all $u \in \R^n$ if and only if for every pair of nonempty, disjoint sets $I, J \subset N$ either 
$$\lim_{ \mylambda \rightarrow \infty} \inner{T(-\mylambda e_{I^c}), e_I} = - \infty \text{ or } \lim_{ \mylambda \rightarrow \infty} \inner{T(\mylambda e_{J^c}), e_J} = + \infty.$$
\end{theorem}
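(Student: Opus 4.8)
The plan is to reduce the statement to a surjective-displacement question on the polyhedral normed space $(V_0, \|\cdot\|_\text{var})$ and then apply Theorem \ref{thm:faces} together with Lemma \ref{lem:connection}. First I would record the dictionary, already noted in the text, between additive eigenvectors of $T+u$ and fixed points of a translate of the normalized map $f(x) = T(x) - \tfrac{1}{n}\inner{T(x),e_N}e_N$. A direct computation shows that for $u \in \R^n$ and $u_0 = u - \tfrac{1}{n}\inner{u,e_N}e_N \in V_0$, the normalized map of $T+u$ equals $f + u_0$; hence $T+u$ has an additive eigenvector if and only if $f+u_0$ has a fixed point in $V_0$. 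Since $f$ maps $V_0$ into $V_0$ and is nonexpansive with respect to $\|\cdot\|_\text{var}$, and since $u_0$ ranges over all of $V_0$ as $u$ ranges over $\R^n$, the condition ``$T+u$ has an additive eigenvector for all $u \in \R^n$'' is equivalent to ``$f-\id$ maps $V_0$ onto $V_0$'', i.e., to $f$ having surjective displacement as a nonexpansive self-map of $(V_0, \|\cdot\|_\text{var})$.

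Next I would invoke Theorem \ref{thm:faces} for the map $f$ on the finite dimensional polyhedral normed space $(V_0, \|\cdot\|_\text{var})$. The proper faces of its unit ball are exactly the sets $F_{IJ}$ with $I, J \subset N$ nonempty and disjoint, and representatives $x_{IJ} \in \ri F_{IJ}$ and $x_{IJ}^* \in \ri F_{IJ}^*$ have already been fixed above. Theorem \ref{thm:faces} then asserts that $f-\id$ maps onto $V_0$ if and only if
\[
\lim_{\mylambda \to \infty} \inner{f(\mylambda x_{IJ}) - \mylambda x_{IJ}, x_{IJ}^*} = -\infty
\]
for every such pair $I, J$; that is, if and only if \eqref{eq:facecondition2} holds for all nonempty disjoint $I, J \subset N$.

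Finally, Lemma \ref{lem:connection} identifies \eqref{eq:facecondition2} with the condition \eqref{eq:hypergraph}, namely that for each nonempty disjoint pair $I, J$ either $\lim_{\mylambda \to \infty}\inner{T(-\mylambda e_{I^c}), e_I} = -\infty$ or $\lim_{\mylambda \to \infty}\inner{T(\mylambda e_{J^c}), e_J} = +\infty$. Chaining the three equivalences yields the theorem. The only points requiring care — none of them deep — are the claimed description of the proper faces of the variation-norm unit ball in $V_0$ (so that Theorem \ref{thm:faces} tests precisely the pairs $I, J$ in the statement) and the surjectivity of $u \mapsto u_0$ onto $V_0$ (so that quantifying over all $u \in \R^n$ genuinely encodes surjectivity of $f - \id$ on $V_0$). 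I expect the mild bookkeeping around this normalization to be the main obstacle; all of the analytic content is already carried by Theorem \ref{thm:faces} and Lemma \ref{lem:connection}.
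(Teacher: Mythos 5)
Your proposal is correct and follows essentially the same route as the paper's proof: reduce the existence of additive eigenvectors of $T+u$ for all $u$ to surjective displacement of the normalized map $f$ on $(V_0, \|\cdot\|_\text{var})$, apply Theorem~\ref{thm:faces} over the faces $F_{IJ}$, and then invoke Lemma~\ref{lem:connection} to translate each limit condition \eqref{eq:facecondition2} into \eqref{eq:hypergraph}. The extra bookkeeping you flag (that $u\mapsto u_0$ maps onto $V_0$, and that the normalized map of $T+u$ is $f+u_0$) is routine and is tacitly assumed in the paper's one-paragraph argument.
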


\begin{proof}
The functions $T+u$ have an additive eigenvector for every $u \in \R^n$ if and only if the corresponding normalized map $f: V_0 \rightarrow V_0$ defined by $f(x) = T(x) - \tfrac{1}{n}\inner{T(x),e_N} e_N$ has surjective displacement. By Theorem \ref{thm:faces}, this happens if and only if \eqref{eq:facecondition2} holds for every disjoint pair of nonempty subsets $I, J \subset N$. This, in turn, is equivalent to \eqref{eq:hypergraph} holding for each pair $I, J$ by Lemma \ref{lem:connection}.
\end{proof}

The condition of Theorem \ref{thm:hypergraph} only requires computing $2(2^n-1)$ limits, however one also needs to check that for each nonempty, disjoint pair $I, J \subset N$, one of the limits in \eqref{eq:hypergraph} is infinite.  There are almost $3^n$ such pairs.  However, a combinatorial approach using hypergraphs reduces the number of computations necessary to check the conditions above to $O(2^n n^2)$. We'll give a short explanation of the idea here, for more details see \cite[Section 4.2.3]{AkGaHo20}.  

A \emph{directed hypergraph} consists of a set of nodes $N$ and a collection of directed hyperarcs.  A \emph{hyperarc} is an ordered pair $(\mathbf{t},\mathbf{h})$ where $\mathbf{t}, \mathbf{h} \subseteq N$.  Following \cite{AkGaHo20}, we introduce two directed hypergraphs $\Hp$ and $\Hm$ both of which have nodes $N = \{1, \ldots, n\}$.  The hyperarcs of $\Hp$ are the pairs $(J,\{i\})$ with $i \notin J$ such that 
$$\lim_{\mylambda \rightarrow \infty} T_i(\mylambda e_J) = +\infty.$$ 
The hyperarcs of $\Hm$ are $(J,\{i\})$ with $i \notin J$ such that 
$$\lim_{\mylambda \rightarrow \infty} T_i(-\mylambda e_J) = -\infty.$$ 

Let $\mathcal{H}$ be any directed hypergraph with nodes $N$. For $I \subseteq N$, we say that $I$ is \emph{invariant} in $\mathcal{H}$ if there are no hyperarcs from a subset of $I$ to a subset of $I^c$.  Since $T$ is order-preserving, if $I$ is not invariant in $\Hm$ or $\Hp$, then there must be a $j \in I^c$ such that $(I,\{j\})$ is a hyperarc in the hypergraph.  Note that $\lim_{\mylambda \rightarrow \infty} \inner{T(-\mylambda e_{I^c}), e_I} = - \infty$ if and only if $I^c$ is not invariant in $\Hm$ and $\lim_{\mylambda \rightarrow \infty} \inner{T(\mylambda e_{J^c}), e_J} = \infty$ if and only if $J^c$ is not invariant in $\Hp$. 

The \emph{reach} of $I$ in a hypergraph $\mathcal{H}$ is the smallest invariant subset of $N$ that contains $I$. 
Using the notion of reach, we can give an improved version of Theorem \ref{thm:hypergraph}.  This next result is described in the comments preceding \cite[Theorem 4.6]{AkGaHo20}. 

\begin{theorem} \label{thm:hypergraph2}
Let $T : \R^n \rightarrow \R^n$ be topical.  Then $T+u$ has an additive eigenvector for all $u \in \R^n$ if and only if for every nonempty proper subset $J \subset N$ either 
$$\lim_{ \mylambda \rightarrow \infty} \inner{T(\mylambda e_{J^c}), e_J} = + \infty \text{ or } \reach(J,\Hm) = N.$$
\end{theorem}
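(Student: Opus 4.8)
The plan is to derive Theorem \ref{thm:hypergraph2} directly from Theorem \ref{thm:hypergraph} by a purely combinatorial rewriting of the quantifiers, using the dictionary recorded just before the statement: $\lim_{\mylambda \to \infty} \inner{T(-\mylambda e_{I^c}), e_I} = -\infty$ if and only if $I^c$ is not invariant in $\Hm$, and $\lim_{\mylambda \to \infty} \inner{T(\mylambda e_{J^c}), e_J} = +\infty$ if and only if $J^c$ is not invariant in $\Hp$. By Theorem \ref{thm:hypergraph}, $T+u$ has an additive eigenvector for all $u \in \R^n$ exactly when, for every disjoint nonempty pair $I, J \subset N$, one has $\lim_{\mylambda \to \infty} \inner{T(-\mylambda e_{I^c}), e_I} = -\infty$ or $\lim_{\mylambda \to \infty} \inner{T(\mylambda e_{J^c}), e_J} = +\infty$, so it suffices to show this pairwise condition is equivalent to the condition in the present theorem.

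First I would reorganize the quantifiers. The disjoint nonempty pairs $I, J \subset N$ are precisely the pairs with $J$ a nonempty proper subset of $N$ and $\varnothing \neq I \subseteq J^c$. Since the disjunct $\lim_{\mylambda \to \infty} \inner{T(\mylambda e_{J^c}), e_J} = +\infty$ does not involve $I$, the condition of Theorem \ref{thm:hypergraph} is equivalent to: for every nonempty proper $J \subset N$, either $\lim_{\mylambda \to \infty} \inner{T(\mylambda e_{J^c}), e_J} = +\infty$, or $\lim_{\mylambda \to \infty} \inner{T(-\mylambda e_{I^c}), e_I} = -\infty$ for every nonempty $I \subseteq J^c$.

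The key step is to identify the second alternative with $\reach(J,\Hm) = N$. Fix a nonempty proper $J$. As $I$ ranges over the nonempty subsets of $J^c$, the complement $K = I^c$ ranges exactly over the sets with $J \subseteq K \subsetneq N$. By the dictionary above, ``$\lim_{\mylambda \to \infty} \inner{T(-\mylambda e_{I^c}), e_I} = -\infty$ for every nonempty $I \subseteq J^c$'' says precisely that no $K$ with $J \subseteq K \subsetneq N$ is invariant in $\Hm$. Since $\reach(J,\Hm)$ is the smallest invariant subset of $N$ containing $J$ --- well-defined because an intersection of invariant sets is again invariant, the heads of all hyperarcs here being singletons --- this is equivalent to $\reach(J,\Hm) = N$: if the reach were a proper subset it would itself be such an invariant $K$, and conversely any invariant $K$ with $J \subseteq K \subsetneq N$ would force $\reach(J,\Hm) \subseteq K \subsetneq N$. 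Substituting this equivalence into the reorganized condition gives exactly the statement of Theorem \ref{thm:hypergraph2}.

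I do not anticipate a substantial obstacle here: the argument is a quantifier rearrangement together with the elementary characterization of reach, and both Theorem \ref{thm:hypergraph} and the limit--invariance dictionary are already available. The only points deserving care are the edge cases --- insisting that $J$ be a proper subset so that $J^c \neq \varnothing$ and the inner alternative quantifies over a nonempty family --- and a one-line remark that $\reach(J,\Hm)$ is well-defined for these hypergraphs.
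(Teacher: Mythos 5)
Your proposal is correct and follows essentially the same route as the paper: both derive Theorem~\ref{thm:hypergraph2} from Theorem~\ref{thm:hypergraph} by translating the limit conditions into invariance statements via the dictionary stated just before the theorem, and then using the characterization of $\reach(J,\Hm)$ as the smallest invariant superset of $J$. Your presentation packages the two directions as a single biconditional through a quantifier rearrangement, whereas the paper argues the two implications separately, but the underlying idea — that $\reach(J,\Hm)=N$ is equivalent to no $I^c$ with $\varnothing\ne I\subseteq J^c$ being invariant — is identical.
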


\begin{proof}
If $\lim_{\mylambda \rightarrow \infty} \inner{ T(\mylambda e_{J^c}), e_J}$ is finite for some nonempty proper subset $J \subset N$ and if $I = N \backslash \reach(J, \Hm)$ is nonempty, then $I^c = \reach(J,\Hm)$ is invariant in $\Hm$ so $\lim_{\mylambda \rightarrow \infty} \inner{T(-\mylambda e_{I^c}), e_I} > -\infty$. Therefore Theorem \ref{thm:hypergraph} implies that $T+u$ does not have an additive eigenvector for all $u \in \R^n$. On the other hand, if $\reach(J, \Hm) = N$, then for any nonempty $I$ disjoint from $J$, we have $J \subseteq I^c$, so $\reach(I^c, \Hm) = N$. This means that $I^c$ is not invariant in $\Hm$, so $\lim_{\mylambda \rightarrow \infty} \inner{T(-\mylambda e_{I^c}),e_I} = -\infty$ for all nonempty $I$ disjoint from $J$. Then Theorem \ref{thm:hypergraph} implies that $T+u$ has an additive eigenvector for all $u \in \R^n$.
\end{proof}

Since the reach of a set in $\Hm$ can be computed relatively quickly (see \cite[Lemma 4.5]{AkGaHo20}), the condition in Theorem \ref{thm:hypergraph2} can be checked in $O(2^n n^2)$ computations including limits as a single computation. See \cite[Theorem 4.6]{AkGaHo20} for details. 

Although we chose to state Theorem \ref{thm:hypergraph2} using the hypergraph $\Hm$, the following dual result is also true.
$T+u$ has additive eigenvector for all $u \in \R^n$ if and only if for every nonempty proper subset $I \subset N$ either 
$$\lim_{ \mylambda \rightarrow \infty} \inner{T(-\mylambda e_{I^c}), e_I} = -\infty \text{ or } \reach(I,\Hp) = N.$$
The proof is essentially the same. It turns out that working with the hypergraph $\Hm$, as in Theorem \ref{thm:hypergraph2}, is more convenient for convex topical maps. 

\subsubsection{Convex topical maps} If $T: \R^n \rightarrow \R^n$ is convex in addition to being topical, then we can determine whether or not $T+u$ has an additive eigenvector for all $u \in \R^n$ much more quickly.  Let $\Gp$ be the directed graph 
with nodes $N$ and an arc from $i$ to $j$ if $\lim_{\mylambda \rightarrow \infty} T_i(\mylambda e_{\{j\}}) = \infty$. Note that the arcs in $\Gp$ point in the opposite direction of the corresponding hyperarcs in $\Hp$.  The graph $\Gp$ was introduced in \cite{GaGu04} and used in \cite{AkGaHo20} to prove the following theorem. Recall that the nodes of a directed graph can be partitioned into their strongly connected components, and a strongly connected component is called a \emph{final class} if there are no arcs from that component to any other.  

\begin{theorem}[{\cite[Corollary 4.4]{AkGaHo20}}] \label{thm:convex}
Let $T: \R^n \rightarrow \R^n$ be convex and topical.  Then $T+u$ has an additive eigenvector for every $u \in \R^n$ if and only if $\Gp$ has a unique final class $C$ and $\reach(C,\Hm) = N$.  
\end{theorem}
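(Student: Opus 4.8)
\emph{Approach.} The plan is to deduce this from Theorem \ref{thm:hypergraph2}, which says that $T+u$ has an additive eigenvector for all $u$ if and only if for every nonempty proper $J\subset N$ either $\lim_{\mylambda\to\infty}\inner{T(\mylambda e_{J^c}),e_J}=+\infty$ or $\reach(J,\Hm)=N$. I would first record two reductions. (i) Since $\mylambda e_{J^c}\ge \mylambda e_{\{j\}}\ge 0$ for $j\in J^c$ and $T$ is order-preserving, $\lim_{\mylambda\to\infty}\inner{T(\mylambda e_{J^c}),e_J}$ is finite exactly when no arc of $\Gp$ runs from $J$ to $J^c$; in particular it is finite whenever $J$ is a final class of $\Gp$. (ii) The key structural fact, and the one place convexity enters: if $T$ is convex and topical, $i\ne j$, and $\lim_{\mylambda\to\infty}T_i(\mylambda e_{\{j\}})<+\infty$, then $T_i$ is independent of its $j$th coordinate. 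The proof is short: $t\mapsto T_i(te_{\{j\}})$ is convex and nondecreasing, hence bounded above, hence constant (a convex function on $\R$ that is bounded above is constant), so $T_i(te_{\{j\}})\equiv T_i(0)$; since each $T_i$ is $\ell_\infty$-nonexpansive (topical maps are, by \cite[Lemma 2.7.2]{LemmensNussbaum}), $|T_i(x+te_{\{j\}})-T_i(0)|\le\|x\|_\infty$ for all $t$, so $t\mapsto T_i(x+te_{\{j\}})$ is a bounded convex function on $\R$, hence constant equal to $T_i(x)$. A consequence I would use repeatedly: if $C$ is a final class of $\Gp$, then each $T_i$ with $i\in C$ depends only on the coordinates indexed by $C$.

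\emph{The ``if'' direction.} Assuming $\Gp$ has a unique final class $C$ with $\reach(C,\Hm)=N$, I would take any nonempty proper $J$. If $\lim_{\mylambda\to\infty}\inner{T(\mylambda e_{J^c}),e_J}=+\infty$ there is nothing to check. Otherwise, by (i) no arc of $\Gp$ leaves $J$, so $J$ is a union of strongly connected components of $\Gp$; the restriction of the condensation to those components has a sink, which is a final class of $\Gp$ and hence equals $C$. Thus $C\subseteq J$, and since $\reach(J,\Hm)$ is an invariant set containing $C$ it contains $\reach(C,\Hm)=N$. So the condition of Theorem \ref{thm:hypergraph2} holds for every $J$, giving the conclusion.

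\emph{The ``only if'' direction.} Assuming $T+u$ has an additive eigenvector for all $u$, the dichotomy of Theorem \ref{thm:hypergraph2} holds for every nonempty proper $J$. I would first rule out two distinct final classes $C_1\ne C_2$ of $\Gp$ (both nonempty and proper): by (ii) each $T_i$ with $i\in C_2$ ignores every coordinate outside $C_2$, so $\lim_{\mylambda\to\infty}T_i(-\mylambda e_{J'})=T_i(0)$ for every $J'\subseteq N\setminus C_2$, whence $\Hm$ has no hyperarc from a subset of $N\setminus C_2$ to $C_2$, i.e. $N\setminus C_2$ is invariant in $\Hm$. Since $C_1\subseteq N\setminus C_2\subsetneq N$ this forces $\reach(C_1,\Hm)\ne N$; but $C_1$ is a final class, so by (i) $\lim_{\mylambda\to\infty}\inner{T(\mylambda e_{C_1^c}),e_{C_1}}$ is finite, and both alternatives fail for $J=C_1$ --- a contradiction. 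Hence $\Gp$ has a unique final class $C$, and finally $\reach(C,\Hm)=N$: this is trivial if $C=N$, and if $C\subsetneq N$ then by (i) the first alternative fails for $J=C$, forcing the second.

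\emph{Main obstacle.} The only nonroutine ingredient is (ii); everything else is bookkeeping with $\Gp$ and $\Hm$. It is precisely (ii) that forces $N\setminus C_2$ to be invariant in $\Hm$ in the uniqueness argument --- for a topical map that is not convex this can fail (e.g. a coordinatewise minimum), so the convexity hypothesis is genuinely used there. I should also note in passing that $\Gp$ always has at least one final class, since the condensation of a nonempty finite directed graph is a DAG and has a sink.
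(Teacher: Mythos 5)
Your proof is correct, and the overall skeleton (reduce to Theorem~\ref{thm:hypergraph2}, then translate between invariance in $\Gp$, $\Hp$, and $\Hm$, then do the DAG bookkeeping about final classes) matches the paper's. The genuine difference is in the convexity lemma you use to bridge $\Gp$ and $\Hm$/$\Hp$. The paper's Lemma~\ref{lem:conv} proves the needed implication (``$J$ invariant in $\Gp$ $\Rightarrow$ $J^c$ invariant in $\Hp$ and $\Hm$'') by two Jensen-type averaging inequalities: $T_i(\tfrac{\mylambda}{|J^c|}e_{J^c}) \le \tfrac{1}{|J^c|}\sum_{j\in J^c} T_i(\mylambda e_{\{j\}})$ to get $\Hp$-invariance, and then $T_i(0)+\tfrac{1}{2}\mylambda \le \tfrac12(T_i(\mylambda e_J)+T_i(\mylambda e_{J^c}))$ together with additive homogeneity to get $\Hm$-invariance as a lower bound on $T_i(-\mylambda e_{J^c})$. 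Your step (ii) replaces this with the sharper structural observation that if $\lim_{\mylambda\to\infty}T_i(\mylambda e_{\{j\}})<\infty$ then $T_i$ is literally independent of the $j$th coordinate, proved from ``a convex function on $\R$ bounded above is constant'' and $\ell_\infty$-nonexpansiveness. This is a stronger conclusion (the relevant limits are not merely finite but equal to $T_i(0)$), and it gives the $\Hm$-invariance in one stroke without invoking additive homogeneity separately; the paper's route is lighter on convex analysis but needs the extra homogeneity manipulation. One cosmetic point: your reduction (i) is stated as an equivalence but the ``since'' clause only justifies the easy direction (an arc forces the limit to be infinite); the converse direction relies on (ii), so logically (ii) should precede (i), or (i) should cite it.
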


Because the strongly connected components of $\Gp$ can be computed in $O(n^2)$ time and $\reach(C,\Hm)$ can also be computed in $O(n^2)$ \cite[Lemma 4.5]{AkGaHo20}, the conditions in Theorem \ref{thm:convex} are much faster to check than the conditions of Theorems \ref{thm:hypergraph}/\ref{thm:hypergraph2} when $n$ is large. See \cite[Corollary 4.7]{AkGaHo20}. The graph $\Gp$ also leads to the following (already known) generalization of \cite[Theorem 2]{GaGu04}.  

\begin{theorem} \label{thm:extra}
Let $T: \R^n \rightarrow \R^n$ be topical. If $\Gp$ is strongly connected, then $T+u$ has an additive eigenvector for every $u \in \R^n$.
\end{theorem}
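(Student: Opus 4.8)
The plan is to derive this from Theorem~\ref{thm:hypergraph2} by showing that strong connectivity of $\Gp$ forces the first alternative in that theorem to hold for every nonempty proper subset $J \subset N$. That is, I will show
\[ \lim_{\mylambda \to \infty} \inner{T(\mylambda e_{J^c}), e_J} = +\infty \qquad \text{whenever } \varnothing \ne J \subsetneq N, \]
after which Theorem~\ref{thm:hypergraph2} immediately gives that $T+u$ has an additive eigenvector for all $u \in \R^n$ (when $n=1$ there is nothing to prove, since there are no nonempty proper subsets). By the equivalence recorded just before Theorem~\ref{thm:hypergraph2}, the displayed limit equals $+\infty$ precisely when $J^c$ is not invariant in the hypergraph $\Hp$, so it suffices to show that no nonempty proper subset $J \subset N$ has $J^c$ invariant in $\Hp$.

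Fix such a $J$. The key is the correspondence between $\Gp$ and $\Hp$ noted in the excerpt: an arc $i \to j$ of $\Gp$ (that is, $\lim_\mylambda T_i(\mylambda e_{\{j\}}) = +\infty$) is exactly a hyperarc $(\{j\},\{i\})$ of $\Hp$. Hence, if $\Gp$ contains an arc $j \to a$ with $j \in J$ and $a \in J^c$, then $(\{a\},\{j\})$ is a hyperarc of $\Hp$ whose tail $\{a\}$ is a subset of $J^c$ and whose head $\{j\}$ is a subset of $(J^c)^c = J$; by definition this means $J^c$ is not invariant in $\Hp$. So it remains only to produce, for each nonempty proper $J \subset N$, an arc of $\Gp$ from a node of $J$ to a node of $J^c$. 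This is immediate from strong connectivity: choosing $\ell \in J$ and $m \in J^c$, any directed path $\ell = v_0 \to v_1 \to \cdots \to v_r = m$ in $\Gp$ must, at its first index $k$ with $v_k \in J^c$ (which exists and satisfies $k \ge 1$), provide an arc $v_{k-1} \to v_k$ with $v_{k-1} \in J$ and $v_k \in J^c$.

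There is no genuinely hard step here; the entire content is the elementary graph-theoretic fact that a strongly connected digraph has an arc crossing every nontrivial cut, combined with the dictionary between $\Gp$-arcs, $\Hp$-hyperarcs, and the limits $\inner{T(\mylambda e_{J^c}), e_J}$ that is already set up in the excerpt. The main thing to be careful about is bookkeeping: the arc $i \to j$ of $\Gp$ corresponds to the hyperarc $(\{j\},\{i\})$ of $\Hp$ with the roles reversed, so one must check that an arc \emph{from} $J$ \emph{to} $J^c$ is what makes $J^c$ (rather than $J$) fail to be invariant, and that order-preservation of $T$ is what guarantees $\mylambda \mapsto \inner{T(\mylambda e_{J^c}), e_J}$ is monotone, so that divergence of a single coordinate term forces the sum to diverge. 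Note also that, unlike Theorem~\ref{thm:convex}, this argument needs no convexity: it uses only that $T$ is order-preserving together with the already-established reformulations, so it applies to arbitrary topical $T$.
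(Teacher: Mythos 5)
Your proof is correct and essentially matches the paper's: where the paper cites the non-convex half of Lemma~\ref{lem:conv} to conclude from strong connectivity of $\Gp$ that $\Hp$ has no nontrivial invariant subsets, you unfold that lemma into a direct cut-crossing argument using the arc/hyperarc dictionary, and you invoke Theorem~\ref{thm:hypergraph2} rather than Theorem~\ref{thm:hypergraph}, but the content is the same in both cases. Your closing observation that no convexity is needed is also accurate, since only the first implication of Lemma~\ref{lem:conv} (which does not assume convexity) is used.
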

\color{black}

We will prove both Theorem \ref{thm:convex} and Theorem \ref{thm:extra} using the following lemma which relates the graph $\Gp$ with the hypergraphs $\Hp$ and $\Hm$. 
\begin{lemma} \label{lem:conv}
Let $T: \R^n \rightarrow \R^n$ be topical. If $I \subset N$ is invariant in $\Hp$, then $I^c$ is invariant in $\Gp$.  If $T$ is also convex, then for any $J \subset N$ that is invariant in $\Gp$, $J^c$ is invariant in both $\Hp$ and $\Hm$.  
\end{lemma}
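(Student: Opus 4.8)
The plan is to treat the two assertions separately. The first needs nothing beyond unwinding the definitions of the hypergraphs and of $\Gp$; the second is where convexity carries the argument.

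For the first assertion I would argue by contraposition. Suppose $I^c$ is not invariant in $\Gp$, so there is an arc $i \rightarrow j$ with $i \in I^c$ and $j \in I$; by definition of $\Gp$ this means $\lim_{\mylambda \rightarrow \infty} T_i(\mylambda e_{\{j\}}) = +\infty$. But that is exactly the defining condition for $(\{j\},\{i\})$ to be a hyperarc of $\Hp$, and since $\{j\} \subseteq I$ and $\{i\} \subseteq I^c$ (note $i \ne j$), this hyperarc goes from a subset of $I$ to a subset of $I^c$, so $I$ is not invariant in $\Hp$. Topicality here is used only to make the monotone limit defining an arc well-posed.

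For the second assertion, the crux is the claim: \emph{if $T$ is convex and topical and $J$ is invariant in $\Gp$, then for each $i \in J$ the coordinate function $T_i$ depends only on the coordinates indexed by $J$.} Granting this, the conclusion is immediate: for $i \in J$ and any $K \subseteq J^c$, the points $\mylambda e_K$, $-\mylambda e_K$ and $0$ all agree on the $J$-coordinates, so $T_i(\mylambda e_K) = T_i(-\mylambda e_K) = T_i(0)$ for every $\mylambda$; hence $\lim_{\mylambda \rightarrow \infty} T_i(\mylambda e_K) = T_i(0) < +\infty$ and $\lim_{\mylambda \rightarrow \infty} T_i(-\mylambda e_K) = T_i(0) > -\infty$, so $(K,\{i\})$ is a hyperarc of neither $\Hp$ nor $\Hm$. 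Since $i \in J$ and $K \subseteq J^c$ were arbitrary, this says precisely that $J^c$ is invariant in both hypergraphs (the degenerate cases $J = \varnothing$ and $J = N$ being vacuous).

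To prove the claim, fix $i \in J$ and $j \in J^c$. Invariance of $J$ in $\Gp$ means there is no arc $i \rightarrow j$, i.e. $c := \sup_{\mylambda \ge 0} T_i(\mylambda e_{\{j\}}) < +\infty$ (this supremum is a limit, since $\mylambda \mapsto T_i(\mylambda e_{\{j\}})$ is non-decreasing). For an arbitrary $x \in \R^n$, put $M = \max_{\ell} x_\ell$, so $x \le M e_N$; then for $t \ge 0$, order-preservation and additive homogeneity give $T_i(x + t e_{\{j\}}) \le T_i(M e_N + t e_{\{j\}}) = M + T_i(t e_{\{j\}}) \le M + c$. Thus $g(t) := T_i(x + t e_{\{j\}})$ is convex on $\R$ (the composition of the convex $T_i$ with an affine map), non-decreasing on $\R$, and bounded above on $[0,\infty)$; such a one-variable function must be constant, since a strictly positive slope anywhere would, by convexity, force at least linear growth on $[0,\infty)$, contradicting boundedness. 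Hence $T_i$ is invariant under translation in the $e_{\{j\}}$ direction, and letting $j$ range over $J^c$ proves the claim. I expect the only genuine content to be isolating this coordinate-independence statement and verifying the elementary one-variable convexity fact; the rest is bookkeeping with the definitions.
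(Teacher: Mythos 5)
Your proof is correct. For the first assertion you argue by contraposition and go directly through the singleton hyperarcs: invariance of $I$ in $\Hp$ already forbids $(\{j\},\{i\})$ with $j\in I$, $i\in I^c$, which is exactly what is needed. The paper instead argues directly, deducing $\lim_{\mylambda\rightarrow\infty}\inner{T(\mylambda e_I),e_{I^c}}<\infty$ from invariance and then using order-preservation to descend to the singleton limits; the two are equivalent, yours being marginally more economical. The second assertion is where you genuinely depart from the paper. The paper works at the level of the limits: for $\Hp$-invariance it applies a Jensen-type inequality $T_i\bigl(\tfrac{\mylambda}{|J^c|}e_{J^c}\bigr) \le \tfrac{1}{|J^c|}\sum_{j\in J^c}T_i(\mylambda e_{\{j\}})$ to pass from the singleton limits to the limit at $e_{J^c}$, and for $\Hm$-invariance it combines additive homogeneity ($T_i(-\mylambda e_{J^c})=T_i(\mylambda e_J)-\mylambda$) with a separate convexity estimate $T_i(\tfrac12\mylambda e_N)\le\tfrac12\bigl(T_i(\mylambda e_J)+T_i(\mylambda e_{J^c})\bigr)$ to get a lower bound. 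You instead isolate and prove the stronger structural fact that for $i\in J$ the component $T_i$ is independent of every coordinate in $J^c$, using the elementary observation that a convex, non-decreasing function on $\R$ bounded above on a ray is constant. With this in hand both invariances are immediate, and in fact with exact equalities $T_i(\pm\mylambda e_K)=T_i(0)$ rather than merely bounded limits. Your route is more conceptual and explains why the limits are finite (the relevant coordinate dependence is absent entirely), at the modest cost of proving more than strictly needed; the paper's route is shorter and stays closer to the hypergraph definitions.
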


\begin{proof}
If $I \subset N$ is invariant in $\Hp$, then $\lim_{t \rightarrow \infty} \inner{T(te_I),e_{I^c}} < \infty$. Since $T$ is order-preserving, 
$$\lim_{t \rightarrow \infty} T_j(te_{\{i\}}) \le \lim_{t \rightarrow \infty} T_j(te_I) < \infty$$
for all $i \in I$ and $j \in I^c$. Therefore $I^c$ is invariant in $\Gp$.   

Now suppose that $T$ is convex and $J \subset N$ is invariant in $\Gp$. This means that
$\lim_{\mylambda \rightarrow \infty} T_i(\mylambda e_{\{j\}}) < \infty$ 
for all $i \in J$ and $j \in J^c$. By the convexity of $T$, 
$$T_i\left(\tfrac{\mylambda}{|J^c|}  e_{J^c} \right) \le \tfrac{1}{|J^c|} \sum_{j \in J^c} T_i(\mylambda e_{\{j\}})$$
therefore $\lim_{\mylambda \rightarrow \infty} T_i(\mylambda e_{J^c}) < \infty$ for all $i \in J$.  This proves that $J^c$ is invariant in $\Hp$. 

Since $T$ is additively homogeneous, $T_i(-\mylambda e_{J^c}) = T_i(\mylambda e_J) - \mylambda$ for all $i \in N$. By convexity
$$
T_i(0) + \tfrac{1}{2} \mylambda = T_i\left( \tfrac{1}{2} \mylambda e_N \right) \le \tfrac{1}{2} (T_i(\mylambda e_J) + T_i(\mylambda e_{J^c})).
$$ 
Rearranging and doubling the inequality above gives 
$$T_i(-\mylambda e_{J^c}) = T_i(\mylambda e_J) - \mylambda \ge 2T_i(0) - T_i(\mylambda e_{J^c}).$$ 
We have already shown that $\lim_{\mylambda \rightarrow \infty} T_i(\mylambda e_{J^c}) < \infty$ when $i \in J$.  So we have $\lim_{\mylambda \rightarrow \infty} T_i(-\mylambda e_{J^c}) > - \infty$ 
for all $i \in J$. Hence $J^c$ is invariant in $\Hm$.
\end{proof}


\begin{proof}[Proof of Theorem \ref{thm:convex}]
If $\Gp$ has two different final classes $I$ and $J$, then both $I$ and $J$ are invariant in $\Gp$ so by Lemma \ref{lem:conv} $I^c$ is invariant in $\Hm$ and $J^c$ is invariant in $\Hp$. This means that 
$$\lim_{t \rightarrow \infty} \inner{T(te_{J^c}),e_J} < \infty \text{ and } \lim_{t \rightarrow \infty} \inner{T(-t e_{I^c}),e_I} > -\infty.$$ 
So $T+u$ does not have an additive eigenvector for all $u \in \R^n$ by Theorem \ref{thm:hypergraph}. 

Now suppose that $C$ is the unique final class of $\Gp$. Since $C$ is invariant in $\Gp$, $C^c$ is invariant in $\Hp$ by Lemma \ref{lem:conv}.  This means that 
$$\lim_{t \rightarrow \infty} \inner{T(te_{C^c}),e_C} < \infty,$$ 
so by Theorem \ref{thm:hypergraph2} it is necessary that $\reach(C,\Hm) = N$ in order for $T+u$ to have an additive eigenvector for every $u \in \R^n$.

We will now show that $\reach(C,\Hm) = N$ is also sufficient. Let $J$ be any nonempty proper subset of $N$ such that $\lim_{t \rightarrow \infty} \inner{T(t e_{J^c}),e_J} < \infty$, i.e., $J^c$ is invariant in $\Hp$. By Lemma \ref{lem:conv}, $J$ is invariant in $\Gp$. 
Any nonempty set that is invariant in $\Gp$ must contain $C$, so if $\reach(C,\Hm) = N$, then $\reach(J,\Hm)=N$ as well. Thus $T+u$ has an additive eigenvector for all $u \in \R^n$ by Theorem \ref{thm:hypergraph2}.
\end{proof}

\begin{proof}[Proof of Theorem \ref{thm:extra}]
If $T: \R^n \rightarrow \R^n$ is topical and $\Gp$ is strongly connected, then Lemma \ref{lem:conv} implies  that there are no nontrivial invariant subsets of $N$ in the hypergraph $\Hp$.  Thus $\lim_{t \rightarrow \infty} \inner{T(te_{J^c}),e_J} = \infty$ for all nonempty proper subsets $J \subset N$, and therefore $T+u$ has an additive eigenvector for all $u \in \R^n$ by Theorem \ref{thm:hypergraph}. 
\end{proof}

%
%

\section{Uniqueness of fixed points} \label{sec:unique}

In this section, we give necessary and sufficient conditions for the uniqueness of fixed points of polyhedral norm nonexpansive maps.  

\begin{lemma} \label{lem:BigRlittler}
Let $X$ be a finite dimensional real Banach space with a polyhedral norm and suppose that $F$ is a proper face of the unit ball $B_1$.  For any $x \in \ri F$, there is a constant $c < 1$ such that 
$$\|Rx - ry\| = R-r$$ whenever $y \in F$ and $0 \le r \le \frac{1}{2}(1-c)R$. 
\end{lemma}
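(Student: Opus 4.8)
The plan is to reduce everything to the polytope structure of the face $F$. Since the norm is polyhedral, $B_1$ is a polytope, so $F$ is a polytope with finitely many extreme points $v_1,\dots,v_m$, and $\|v_i\| = 1$ for each $i$ because extreme points of a proper face of $B_1$ are extreme points of $B_1$ and hence of norm one. As $x \in \ri F$, I would write $x = \sum_{i=1}^{m} \alpha_i v_i$ with $\alpha_i > 0$ and $\sum_i \alpha_i = 1$ --- the standard description of the relative interior of a polytope, which one can also obtain directly by pushing the barycenter $\tfrac1m \sum_i v_i$ slightly past $x$ while staying in $F$. Setting $\alpha := \min_i \alpha_i > 0$ and $c := 1 - 2\alpha$, we have $c < 1$ and $\tfrac12(1-c) = \alpha$, so the lemma reduces to showing that $\|Rx - ry\| = R - r$ whenever $y \in F$ and $0 \le r \le \alpha R$; the case $R = 0$ is trivial, so assume $R > 0$.

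For the upper bound, I would fix $y \in F$, write $y = \sum_i \beta_i v_i$ with $\beta_i \ge 0$ and $\sum_i \beta_i = 1$, and expand
\[
Rx - ry = \sum_{i=1}^{m} \bigl( R\alpha_i - r\beta_i \bigr)\, v_i .
\]
These coefficients sum to $R - r$, and each is nonnegative since $R\alpha_i - r\beta_i \ge R\alpha - r \ge 0$ (using $\alpha_i \ge \alpha$, $\beta_i \le 1$, and $r \le \alpha R$). Thus $Rx - ry$ is a nonnegative combination of the unit vectors $v_i$ with weights summing to $R - r$, and the triangle inequality gives $\|Rx - ry\| \le R - r$.

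For the matching lower bound I would exhibit a norming functional that is constant on $F$: pick any $x^* \in J(x)$, so $\|x^*\|_* = 1$ and $\inner{x, x^*} = 1$. From $1 = \inner{x, x^*} = \sum_i \alpha_i \inner{v_i, x^*}$ together with $\inner{v_i, x^*} \le 1$ and $\alpha_i > 0$, we get $\inner{v_i, x^*} = 1$ for all $i$, hence $\inner{y, x^*} = 1$ for every $y \in F$. Then $\inner{Rx - ry, x^*} = R - r$, and since $\|x^*\|_* = 1$ and $R - r \ge 0$, this forces $\|Rx - ry\| \ge R - r$. Combining the two estimates yields $\|Rx - ry\| = R - r$.

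I do not anticipate a genuine difficulty here; once the polytope description of $\ri F$ is available the rest is bookkeeping. The two points that need a little care are (i) citing or proving that description of the relative interior, and (ii) calibrating $c$ so that the hypothesis $0 \le r \le \tfrac12(1-c)R$ of the statement is exactly the inequality $r \le \alpha R$ that keeps every coefficient $R\alpha_i - r\beta_i$ nonnegative --- any $c$ with $1 - 2\alpha \le c < 1$ serves equally well.
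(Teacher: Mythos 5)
Your proof is correct, and it takes a genuinely different route from the paper. The paper works entirely on the dual side: it uses the polyhedral norm's representation $\|z\| = \max_{\nu \in \ext B_1^*} \inner{z,\nu}$, sets $c = \max\{\inner{x,\nu} : \nu \in (\ext B_1^*)\setminus F^*\}$, and then checks that $\inner{Rx - ry,\nu}$ equals $R-r$ on $F^*$ and is at most $cR + r \le R-r$ off $F^*$, so the maximum over the dual extreme points is exactly $R-r$. You instead work on the primal side, decomposing $x$ and $y$ over the vertices of $F$ and calibrating $c$ from the smallest barycentric coordinate of $x$, getting the upper bound $\|Rx-ry\| \le R-r$ from the triangle inequality applied to a nonnegative combination of unit vectors, and the lower bound from a single norming functional that is constant on $F$. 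The paper's argument has the small advantage that both bounds fall out of the same dual formula, while yours is more elementary in that it avoids dual faces entirely and makes the role of the constant $c$ (it quantifies how deeply $x$ sits inside $F$) very transparent; your upper-bound step does require the standard fact that $\ri F$ consists exactly of the strictly positive convex combinations of the vertices, which you correctly flag and sketch. Both arguments establish $c < 1$ from $x \in \ri F$, just in dual versus primal language. Note also that the two constants need not agree, but both are admissible since the lemma only requires some $c < 1$ that works.
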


\begin{proof}
Since $x, y \in F$, it follows that $\inner{x,\nu} = \inner{y, \nu} = 1$ for all $\nu$ in the dual face $F^*$.  Let $c = \max \{\inner{x,\nu} \, : \, \nu \in (\ext B_1) \backslash F^* \}$.  Since $x \in \ri F$, it follows that $c < 1$.  If $0 \le r \le \frac{1}{2} (1-c) R$, then $cR + r \le R-r$.  Observe that 
$\inner{Rx-ry, \nu} = R -r$
if $\nu \in F^*$, and $\inner{Rx-ry, \nu} \le cR + r$ if $\nu \in \ext B_1^*$ and $\nu \notin F^*$.  Therefore $\|Rx-ry\| = \max_{\nu \in \ext B_1^*} \inner{Rx-ry,\nu} = R-r$.  
\end{proof}

\begin{theorem} \label{thm:unique}
Let $X$ be a finite dimensional real Banach space with a polyhedral norm, let $f: X \rightarrow X$ be nonexpansive, and suppose that $f(0) = 0$.  Then the following are equivalent.
\begin{enumerate}
\item $f$ has a nonzero fixed point $u \in X$.
\item For some $R > 0$, there is an $x$ contained in the relative interior of a proper face $F_R$ of $B_R$ such that $f(x) \in F_R$.  
\item There is a proper face $G$ of the unit ball $B_1$ and an $\epsilon > 0$ such that $rG$ is invariant under $f$ for all $0 \le r \le \epsilon$.   
\end{enumerate}
\end{theorem}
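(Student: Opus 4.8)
The plan is to establish the cycle of implications $(3)\Rightarrow(1)\Rightarrow(2)\Rightarrow(3)$. For $(3)\Rightarrow(1)$ I would argue that if $rG$ is invariant under $f$ for $0\le r\le\epsilon$, then in particular $f$ maps the compact convex set $\epsilon G$ into itself; since $f$ is nonexpansive, hence continuous, Brouwer's fixed point theorem yields a fixed point $u\in\epsilon G$, and because $G$ is a proper face of $B_1$ we have $G\subseteq\partial B_1$, so $\|u\|=\epsilon>0$ and $u\neq 0$. For $(1)\Rightarrow(2)$: given a nonzero fixed point $u$, set $R=\|u\|>0$ and let $F_R$ be the unique face of the polytope $B_R$ whose relative interior contains $u$; since $u\in\partial B_R$ this face is proper, and $f(u)=u\in F_R$, so $x:=u$ works.

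The substantive step is $(2)\Rightarrow(3)$. Here I would set $G=\tfrac1R F_R$, a proper face of $B_1$; then $x/R\in\ri G$, and since $f(x)\in F_R=RG$ we may write $f(x)=Rz$ with $z\in G$, while $\|y\|=1$ for every $y\in G$ (as $G\subseteq\partial B_1$). Applying Lemma~\ref{lem:BigRlittler} to the face $G$ and the point $x/R\in\ri G$ produces a constant $c<1$ with $\|x-ry\|=R-r$ for all $y\in G$ and all $0\le r\le\tfrac12(1-c)R$; put $\epsilon=\min\{\tfrac12(1-c)R,\tfrac R2\}>0$. Fix $y\in G$ and $0<r\le\epsilon$. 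Nonexpansiveness of $f$ with $f(0)=0$ gives $\|f(ry)\|\le r$, and nonexpansiveness with $f(x)=Rz$ gives $\|Rz-f(ry)\|\le\|x-ry\|=R-r$; adding these, $R=\|Rz\|\le\|Rz-f(ry)\|+\|f(ry)\|\le R$, so both intermediate inequalities are equalities. Hence $\tfrac{f(ry)}{r}$ and $\tfrac{Rz-f(ry)}{R-r}$ both lie in $B_1$, and the identity $z=\tfrac rR\cdot\tfrac{f(ry)}{r}+\tfrac{R-r}{R}\cdot\tfrac{Rz-f(ry)}{R-r}$ exhibits $z\in G$ as a convex combination, with coefficient $r/R\in(0,1)$, of two points of $B_1$. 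Since $G$ is a face of $B_1$, both points lie in $G$; in particular $f(ry)/r\in G$, i.e.\ $f(ry)\in rG$. Together with $f(0)=0$, this shows $rG$ is invariant under $f$ for all $0\le r\le\epsilon$.

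I expect the main obstacle to be exactly this last implication: promoting the single hypothesis ``$f(x)\in F_R$ for one $x\in\ri F_R$'' to invariance of an entire sliver $\{\,rG : 0\le r\le\epsilon\,\}$. The two ingredients that make it work are Lemma~\ref{lem:BigRlittler}, which forces $\|x-ry\|$ to equal exactly $R-r$ and so turns nonexpansiveness into a saturated triangle inequality, and the defining property of a face, which converts ``$z$ is a nontrivial convex combination of two points of $B_1$'' into ``those points lie in $G$.'' The two easy directions need only Brouwer's theorem together with the elementary fact that every point of a polytope lies in the relative interior of a unique face.
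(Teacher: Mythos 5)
Your cycle of implications $(3)\Rightarrow(1)\Rightarrow(2)\Rightarrow(3)$ is exactly the one in the paper, and the first two are identical (Brouwer on a compact convex proper face; the trivial observation for $(1)\Rightarrow(2)$). The argument for $(2)\Rightarrow(3)$ is correct and reaches the same conclusion, but your last step differs in an interesting way from the paper's. The paper picks the (possibly smaller) face $G_R\subseteq F_R$ whose relative interior contains $f(x)$, introduces the dual face $G^*$, and shows $\inner{f(rv),\nu}=r$ for every $\nu\in G^*$, concluding $f(rv)\in rG$ via the duality between $G$ and $G^*$. You instead work with $G=\tfrac1R F_R$ itself and exhibit $z=f(x)/R$ as a nontrivial convex combination of $f(ry)/r$ and $(Rz-f(ry))/(R-r)$, two points of $B_1$ (both of norm exactly $1$ because the triangle inequality saturates), and then invoke the extremality property that defines a face. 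Both routes rest on the same engine, namely Lemma~\ref{lem:BigRlittler} turning nonexpansiveness into an equality $\|x-ry\|=R-r$; your version is slightly more elementary since it avoids dual faces and the duality map, while the paper's version yields a marginally sharper conclusion, pinning invariance on the smaller face $G_R/R$ rather than on all of $F_R/R$. Either face satisfies condition~$(3)$, so the theorem follows in both cases. One small hygiene point you handled correctly and that is easy to miss: $\epsilon$ must be kept strictly below $R$ so that $r/R\in(0,1)$ makes the convex combination genuinely nontrivial; your $\min$ with $R/2$ takes care of the case where the lemma's constant $c$ is negative.
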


\begin{proof}
(1) $\Rightarrow$ (2). If $u \ne 0$ is a fixed point of $f$, let $R = \|u\|$ and let $F_R$ be the face of $B_R$ such that $u \in \ri F$.  Then $f(u) = u \in F_R$.  

(2) $\Rightarrow$ (3). Suppose $x \in \ri F_R$ and $f(x) \in F_R$.  Let $G_R \subseteq F_R$ be the face of $B_R$ such that $f(x) \in \ri G_R$. Let $F = \frac{1}{R}F_R$ and $G = \frac{1}{R} G_R$ so that $F$ and $G$ are faces of the unit ball $B_1$. Let $G^*$ be the dual face corresponding to $G$.  Choose any $v \in G$.  Since $G \subseteq F$, we have $\|x - rv\| = R - r$ for all $r > 0$ sufficiently small by Lemma \ref{lem:BigRlittler}.  Then $\|f(x)-f(rv)\| \le R - r$ by nonexpansiveness.  Observe that $\inner{f(x),\nu} = \|f(x)\| = R$ for all $\nu \in G^*$.  At the same time, $\inner{f(rv),\nu} \le \|f(rv)\| \le r$ for all $\nu \in G^*$.  Therefore, for any $\nu \in G^*$ we have
$$R - r \le \inner{f(x)-f(rv), \nu} \le \|f(x)-f(rv)\| \le \|x - rv \| = R-r.$$
This proves that $\inner{f(rv), \nu} = r$ for all $\nu \in G^*$ and therefore $f(rv) \in rG$.  Since $v \in G$ was arbitrary, this proves that $rG$ is invariant under $f$. 

(3) $\Rightarrow$ (1). The faces of $B_1$ are compact and convex. Therefore, if $rG$ is invariant for some $r > 0$, then it contains a nonzero fixed point of $f$ by the Brouwer fixed point theorem.
\end{proof}

\begin{corollary} \label{cor:unique}
Let $X$ be a finite dimensional real Banach space with a polyhedral norm, let $f: X \rightarrow X$ be nonexpansive, and suppose $u \in X$ is a fixed point of $f$. 
For each face $F$ of the unit ball $B_1$, choose $x_F \in \ri F$ and $x_F^* \in \ri F^*$.  Then $u$ is the unique fixed point of $f$ if and only if 
\begin{equation} \label{eq:unique}
\inner{f(u+\mylambda x_F) -u - \mylambda x_F,x_F^*}  < 0
\end{equation}
for all $\mylambda >0$ and every face $F$ of $B_1$.  
\end{corollary}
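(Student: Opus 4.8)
The plan is to reduce to the case $u=0$ and then match the single-ray condition \eqref{eq:unique} against the geometric characterizations of a nonzero fixed point in Theorem~\ref{thm:unique}. First I would replace $f$ by the nonexpansive map $g(x):=f(x+u)-u$, which satisfies $g(0)=0$ and $\Fix_0(g)=\Fix_0(f)-u$; thus $u$ is the unique fixed point of $f$ if and only if $0$ is the unique fixed point of $g$, and \eqref{eq:unique} asserts exactly that $\inner{g(\mylambda x_F)-\mylambda x_F, x_F^*}<0$ for all $\mylambda>0$ and every proper face $F$ of $B_1$. Since $x_F\in\ri F$ lies on the unit sphere and $x_F^*\in\ri F^*\subseteq J(\ri F)$, we get $\|x_F^*\|_*=1$, and as $\inner{\cdot,x_F^*}$ is $\le 1$ on $B_1$ and attains the value $1$ at some point of $\ri F$ it equals $1$ on all of $F$; in particular $x_F^*\in J(x_F)$ with $\inner{x_F,x_F^*}=1$. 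Hence Lemma~\ref{lem:monotone} shows $\mylambda\mapsto\inner{g(\mylambda x_F)-\mylambda x_F, x_F^*}$ is nonincreasing, and since its value at $\mylambda=0$ is $0$ it is $\le 0$ for every $\mylambda>0$. Therefore \eqref{eq:unique} fails for some $(\mylambda,F)$ if and only if there is a proper face $F$ and a $\mylambda_0>0$ with $\inner{g(\mylambda_0 x_F)-\mylambda_0 x_F, x_F^*}=0$, and it suffices to prove that this last condition holds if and only if $g$ has a nonzero fixed point.

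For the forward implication, suppose $\inner{g(\mylambda_0 x_F)-\mylambda_0 x_F, x_F^*}=0$ for some proper face $F$ and $\mylambda_0>0$, and set $x:=\mylambda_0 x_F$. Then $\inner{g(x),x_F^*}=\inner{x,x_F^*}=\mylambda_0=\|x\|$, while nonexpansiveness and $g(0)=0$ force $\|g(x)\|\le\mylambda_0$; combined with $\|x_F^*\|_*=1$ this yields $\|g(x)\|=\mylambda_0$ and $x_F^*\in J(g(x))$. Here I invoke polyhedrality: by the standard duality between the faces of the polytope $B_1$ and those of $B_1^*$, whenever $x_F^*\in\ri F^*$ the exposed face $\{y\in B_1:\inner{y,x_F^*}=1\}$ equals $F$, so $g(x)/\mylambda_0\in F$, i.e.\ $g(x)$ lies in the proper face $\mylambda_0 F$ of $B_{\mylambda_0}$ whose relative interior contains $x=\mylambda_0 x_F$. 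This is condition (2) of Theorem~\ref{thm:unique} with $R=\mylambda_0$, so by the implication $(2)\Rightarrow(1)$ there $g$ has a nonzero fixed point. Conversely, if $g$ has a nonzero fixed point, the implication $(1)\Rightarrow(3)$ of Theorem~\ref{thm:unique} provides a proper face $G$ of $B_1$ and an $\epsilon>0$ such that $rG$ is invariant under $g$ for $0\le r\le\epsilon$; taking $r=\epsilon$ and using $x_G\in G$ gives $g(\epsilon x_G)\in\epsilon G$, and since $\inner{y,x_G^*}=1$ for all $y\in G$ (because $x_G^*\in G^*$), we get $\inner{g(\epsilon x_G),x_G^*}=\epsilon=\inner{\epsilon x_G,x_G^*}$, so \eqref{eq:unique} fails at $(\epsilon,G)$. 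Together with the reduction to $g$ and the monotonicity observation this proves the corollary.

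The main obstacle is the careful translation between \eqref{eq:unique} and the face-theoretic statements of Theorem~\ref{thm:unique}: one must notice that the quantity in \eqref{eq:unique} is automatically $\le 0$, so that its strict negativity is the genuine condition, and one must correctly use polyhedral duality to conclude that a unit vector $z$ with $x_F^*\in J(z)$ and $x_F^*\in\ri F^*$ lies in $F$ itself rather than in some larger face. The remaining steps are routine applications of Lemma~\ref{lem:monotone} and Theorem~\ref{thm:unique}.
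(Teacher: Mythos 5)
Your proof is correct and follows essentially the same route as the paper's: reduce to $u=0$, observe that $\inner{f(\mylambda x_F)-\mylambda x_F, x_F^*}\le 0$ automatically so that \eqref{eq:unique} fails precisely when equality holds at some $\mylambda_0$, and then translate equality into an occurrence of condition (2) (resp.\ a failure of condition (3)) of Theorem~\ref{thm:unique} via polyhedral face duality. You spell out a few points the paper leaves implicit, in particular the monotonicity via Lemma~\ref{lem:monotone} and the explicit use of $(1)\Rightarrow(3)$ in the converse, but the underlying argument is the same.
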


\begin{proof}
We may assume without loss of generality that $u = 0$ by replacing $f$ with the map $x \mapsto f(u+x)-u$. Since $f$ is nonexpansive and $f(0) = 0$, it follows that $\|f(\mylambda x_F)\| \le \mylambda$ for all $\mylambda > 0$.  Therefore $\inner{f(\mylambda x_F) - \mylambda x_F,x_F^*} \le 0$.  If \eqref{eq:unique} fails for any face $F$ and $\mylambda > 0$, then $\inner{f(\mylambda x_F),x_F^*} = \mylambda $.  This means that $f(\mylambda x_F) \in \mylambda F$ and so $f$ has a nonzero fixed point by Theorem \ref{thm:unique}. Conversely, if \eqref{eq:unique} holds for every face $F$ and $\mylambda > 0$, then none of the faces $\mylambda F$ are invariant under $f$. Therefore Theorem \ref{thm:unique} implies that $0$ is the unique fixed point of $f$. 
\end{proof}

Note that the functions $\mylambda \mapsto \inner{f(u+\mylambda x_F)-u - \mylambda x_F,x_F^*}$ in \eqref{eq:unique} are monotone decreasing by Lemma \ref{lem:monotone}.  Therefore it suffices to verify that 
$$\lim_{\mylambda \rightarrow 0^+} \operatorname{sign} (\inner{f(u+\mylambda x_F) -u - \mylambda x_F,x_F^*}) = -1$$ 
for each face $F$ in order to confirm that $u$ is the unique fixed point of $f$. 

For subtopical maps, we have the following criterion for a fixed point to be unique. 

\begin{theorem}
Let $T : \R^n \rightarrow \R^n$ be subtopical and suppose that $u$ is a fixed point of $T$.  Then $u$ is the unique fixed point of $T$ if and only if for every nonempty $I \subseteq N$, 
$$
\inner{T(u+\mylambda e_{I})-u, e_I} - \mylambda|I| < 0 \text{ and } \inner{T(u-\mylambda e_{I})-u, e_I} + \mylambda|I| >0
$$
for all $t >0$.
\end{theorem}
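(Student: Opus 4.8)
The plan is to deduce this theorem from Corollary~\ref{cor:unique} in exactly the way Theorem~\ref{thm:subtopical} was deduced from Theorem~\ref{thm:faces}. First I would note that a subtopical $T$ is, being order-preserving and additively subhomogeneous, nonexpansive with respect to $\|\cdot\|_\infty$, so Corollary~\ref{cor:unique} applies with $X = (\R^n, \|\cdot\|_\infty)$. Using the faces of $B_1$ from Example~\ref{ex:supnorm} — the proper faces are the $F_{IJ}$ for disjoint $I, J \subseteq N$ with $I \cup J \neq \varnothing$, with representatives $x_{IJ} = e_I - e_J \in \ri F_{IJ}$ and $x_{IJ}^* = \tfrac{1}{|I|+|J|}(e_I - e_J) \in \ri F_{IJ}^*$ — Corollary~\ref{cor:unique} says that $u$ is the unique fixed point of $T$ if and only if, for every such pair $I, J$ and every $\mylambda > 0$, after clearing the positive factor $|I|+|J|$,
\begin{equation*}
\inner{T(u + \mylambda(e_I - e_J)) - u - \mylambda(e_I - e_J),\, e_I - e_J} < 0.
\end{equation*}

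For the necessity direction I would just specialize: taking $J = \varnothing$ gives the first displayed inequality $\inner{T(u + \mylambda e_I) - u, e_I} - \mylambda|I| < 0$, and taking $I = \varnothing$, after rewriting $\inner{\,\cdot\,,\, -\tfrac{1}{|J|}e_J}$ and flipping the inequality, gives the second, $\inner{T(u - \mylambda e_J) - u, e_J} + \mylambda|J| > 0$. For sufficiency I would run the monotonicity argument from the proof of Theorem~\ref{thm:subtopical}, but now pointwise in $\mylambda$ rather than in the limit. From $u - \mylambda e_J \le u + \mylambda(e_I - e_J) \le u + \mylambda e_I$ and the fact that $T$ is order-preserving, $T(u - \mylambda e_J) \le T(u + \mylambda(e_I - e_J)) \le T(u + \mylambda e_I)$. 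Splitting the pairing over $e_I$ and $e_J$ and using these coordinatewise bounds (pairing with the nonnegative vectors $e_I$, $e_J$ preserves the inequalities),
\begin{align*}
&\inner{T(u + \mylambda(e_I - e_J)) - u - \mylambda(e_I - e_J),\, e_I - e_J} \\
&\quad = \bigl(\inner{T(u + \mylambda(e_I - e_J)) - u,\, e_I} - \mylambda|I|\bigr) - \bigl(\inner{T(u + \mylambda(e_I - e_J)) - u,\, e_J} + \mylambda|J|\bigr) \\
&\quad \le \bigl(\inner{T(u + \mylambda e_I) - u,\, e_I} - \mylambda|I|\bigr) - \bigl(\inner{T(u - \mylambda e_J) - u,\, e_J} + \mylambda|J|\bigr),
\end{align*}
and the two displayed hypotheses make the first parenthesized quantity negative and the second positive, so the whole expression is negative; hence the criterion of Corollary~\ref{cor:unique} holds for every proper face of $B_1$, and $u$ is the unique fixed point.

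I do not expect a genuine obstacle here: the argument is essentially a transcription of the proof of Theorem~\ref{thm:subtopical}, with the limits replaced by pointwise inequalities in $\mylambda$ and the role of "surjective displacement via Theorem~\ref{thm:faces}" played by "uniqueness via Corollary~\ref{cor:unique}." The only thing that needs care is the sign bookkeeping in the $I = \varnothing$ and $J = \varnothing$ reductions and in the split of the pairing over $e_I$ and $e_J$; one should double-check that the factor $|I|+|J|$ cancels cleanly and that the monotonicity step is applied to the correct two comparison vectors $u \pm \mylambda e_I$ and $u \mp \mylambda e_J$.
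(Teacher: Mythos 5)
Your proposal is correct and follows essentially the same route as the paper: reduce to Corollary~\ref{cor:unique} on $(\R^n,\|\cdot\|_\infty)$ with the face representatives $x_{IJ}$, $x_{IJ}^*$ from Example~\ref{ex:supnorm}, get necessity by specializing to $J=\varnothing$ and $I=\varnothing$, and get sufficiency from the order-preserving splitting inequality, which the paper cites from the proof of Theorem~\ref{thm:subtopical} and you re-derive in place. The only cosmetic difference is that the paper normalizes $u=0$ by replacing $T$ with $x\mapsto T(u+x)-u$, whereas you carry $u$ through the computation.
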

\begin{proof}
We can assume without loss of generality that $u = 0$ by replacing $T$ with the map $x \mapsto T(u+x)-u$ which is also subtopical. All subtopical maps are nonexpansive with respect to the $\ell_\infty$ norm on $\R^n$. We use the same notation as in Example \ref{ex:supnorm} for the faces $F_{IJ}$ of the $\ell_\infty$ unit ball $B_1$, as well as representative points $x_{IJ} \in \ri F_{IJ}$ and $x_{IJ}^* \in \ri F_{IJ}^*$.  In the proof of Theorem \ref{thm:subtopical}, we showed that
\begin{align*}
(|I|+|J|)\langle T(\mylambda x_{IJ})&-\mylambda x_{IJ}, x_{IJ}^* \rangle = \inner{T(\mylambda x_{IJ}), x_{IJ}} - \mylambda (|I|+|J|) \\ 
&\le  \left( \inner{T(\mylambda e_I), e_I} - \mylambda |I| \right) - \left( \inner{T(-\mylambda e_J), e_J} + \mylambda |J| \right)
\end{align*}
for all disjoint $I, J \subset N$ with $I \cup J \ne \varnothing$. Therefore, \eqref{eq:unique} holds for every face $F_{IJ}$ (including $F_{\varnothing I}$ and $F_{I \varnothing}$) if and only if 
$$\inner{T(\mylambda e_{I}), e_I} - \mylambda|I| < 0 
\text{ and } \inner{T(-\mylambda e_{I}), e_I} + \mylambda|I| > 0$$
for all nonempty $I \subseteq N$ and $t >0$. 
\end{proof}

We can also use the results of this section to give an alternative proof of \cite[Theorem 4.8]{AkGaHo20}. Combining Corollary \ref{cor:unique} with \eqref{eq:lowerConnect} and \eqref{eq:upperConnect} we have the following. 

\begin{theorem}
Let $T : \R^n \rightarrow \R^n$ be topical.  Suppose that $u$ is an additive eigenvector of $T$.  Then $u$ is the unique additive eigenvector of $T$ up to an additive constant if and only if for every pair of nonempty disjoint sets $I, J \subset N$, and all $t > 0$, either 
$$\inner{T(u-\mylambda e_{I^c})-u, e_I} < 0 \text{ or } \inner{T(u+\mylambda e_{J^c})-u, e_J} > 0.$$
\end{theorem}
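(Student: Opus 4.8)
The plan is to deduce the theorem from Corollary~\ref{cor:unique} applied to the normalized map, translating the resulting criterion by means of the estimates \eqref{eq:lowerConnect} and \eqref{eq:upperConnect}, in the same way that Lemma~\ref{lem:connection} was used to derive Theorem~\ref{thm:hypergraph}.

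First I would normalize. Write $T(u) = u + \lambda_0 e_N$ and let $f(x) = T(x) - \tfrac1n\inner{T(x),e_N}e_N$ be the associated normalized map on $(V_0,\|\cdot\|_{\text{var}})$, with $u_0 = u - \tfrac1n\inner{u,e_N}e_N \in V_0$. One checks that $u_0$ is a fixed point of $f$ and that $u$ is the unique additive eigenvector of $T$ up to an additive constant precisely when $u_0$ is the unique fixed point of $f$. Applying Corollary~\ref{cor:unique} to $f$ with fixed point $u_0$, using the faces $F_{IJ}$ of the variation-norm unit ball and the representatives $x_{IJ}\in\ri F_{IJ}$, $x_{IJ}^*\in\ri F_{IJ}^*$ fixed before Lemma~\ref{lem:connection}, gives: $u_0$ is the unique fixed point of $f$ if and only if
$$\inner{f(u_0 + \mylambda x_{IJ}) - u_0 - \mylambda x_{IJ},\, x_{IJ}^*} < 0$$
for all $\mylambda > 0$ and all disjoint nonempty $I,J\subset N$. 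Since $\inner{e_N,x_{IJ}^*}=0$, both the normalization and the translation by $\tfrac1n\inner{u,e_N}e_N$ cancel out of this expression, so it equals $\inner{T(u+\mylambda x_{IJ}) - u - \mylambda x_{IJ},\, x_{IJ}^*}$.

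Next I would rerun the computation from the proof of Lemma~\ref{lem:connection} with the fixed point $u$ carried along (additive homogeneity absorbs the constants added inside $T$). This yields, for each disjoint nonempty $I,J$ and each $\mylambda > 0$, the lower bound
$$\inner{T(u+\mylambda x_{IJ}) - u - \mylambda x_{IJ},\, x_{IJ}^*} \ge \tfrac{1}{|I|}\inner{T(u-\mylambda e_{I^c})-u,\,e_I} - \tfrac{1}{|J|}\inner{T(u+\mylambda e_{J^c})-u,\,e_J}$$
and a corresponding upper bound in which $\mylambda$ is replaced by the rescaled arguments $\tfrac{\mylambda}{\delta|I|}$ and $\tfrac{\mylambda}{\delta|J|}$ respectively; the additive eigenvalue $\lambda_0$ cancels in each bound because the $e_I$-term and the $e_J$-term enter with opposite signs. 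The ``only if'' direction is then immediate from the lower bound: if the left-hand side is $<0$ for every $\mylambda>0$, then for each $\mylambda$ one cannot have both $\inner{T(u-\mylambda e_{I^c})-u,e_I}\ge 0$ and $\inner{T(u+\mylambda e_{J^c})-u,e_J}\le 0$, which is exactly the stated alternative.

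For the ``if'' direction I would use the upper bound, and here lies the main obstacle: the upper bound evaluates the two relevant quantities at the \emph{rescaled} arguments $\tfrac{\mylambda}{\delta|I|}$ and $\tfrac{\mylambda}{\delta|J|}$ rather than at a common value, whereas the hypothesis is phrased with a single $\mylambda$. This is resolved by monotonicity: since $T$ is order-preserving, $\mylambda\mapsto\inner{T(u-\mylambda e_{I^c})-u,e_I}$ is nonincreasing and $\mylambda\mapsto\inner{T(u+\mylambda e_{J^c})-u,e_J}$ is nondecreasing, and both are continuous. A short argument with such a pair of functions shows that ``for all $\mylambda>0$, the first is negative or the second is positive'' is equivalent to ``the first is negative for all $\mylambda>0$, or the second is positive for all $\mylambda>0$'', and feeding this into the upper bound makes its right-hand side strictly negative for every $\mylambda>0$ and every face $F_{IJ}$. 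Corollary~\ref{cor:unique} then gives that $u_0$ is the unique fixed point of $f$, i.e.\ $u$ is the unique additive eigenvector of $T$ up to an additive constant. I expect the careful bookkeeping for this last step --- verifying that the rescaled arguments genuinely do no harm --- to be the only real difficulty; everything else is a routine transcription of the arguments behind Lemma~\ref{lem:connection} and Corollary~\ref{cor:unique}.
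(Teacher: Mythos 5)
Your overall structure is the right one and matches what the paper clearly intends (translate Corollary~\ref{cor:unique} through the normalization $f(x)=T(x)-\tfrac1n\inner{T(x),e_N}e_N$ and use translated forms of \eqref{eq:lowerConnect}--\eqref{eq:upperConnect}), and your ``only if'' direction via the lower bound is correct. The gap is in the ``if'' direction, and it is not where you suspected.

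Write $p_I(\mymu)=\inner{T(u-\mymu e_{I^c})-u,e_I}$ (nonincreasing) and $q_J(\mymu)=\inner{T(u+\mymu e_{J^c})-u,e_J}$ (nondecreasing). Your monotonicity step correctly reduces the hypothesis to: for each disjoint nonempty $I,J$, either $p_I<0$ on all of $(0,\infty)$ or $q_J>0$ on all of $(0,\infty)$. But the translated upper bound is
$$\inner{f(u_0+\mylambda x_{IJ})-u_0-\mylambda x_{IJ},\,x_{IJ}^*}\ \le\ \tfrac{1}{|I|}\,p_I\!\left(\tfrac{\mylambda}{\delta|I|}\right)\ -\ \tfrac{1}{|J|}\,q_J\!\left(\tfrac{\mylambda}{\delta|J|}\right),$$
and to make the right-hand side strictly negative you need \emph{both} terms to help: if you are in the case $p_I<0$ everywhere you still need $q_J\ge 0$, and if you are in the case $q_J>0$ everywhere you still need $p_I\le 0$. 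These auxiliary sign conditions are exactly what you do not have in general. Writing $T(u)=u+\lambda_0 e_N$, one has $p_I(0)=\lambda_0|I|$ and $q_J(0)=\lambda_0|J|$, so $p_I\le 0$ and $q_J\ge 0$ hold (by monotonicity from $0$) precisely when $\lambda_0=0$; your remark that ``the additive eigenvalue $\lambda_0$ cancels'' is true for the inequalities \eqref{eq:lowerConnect}--\eqref{eq:upperConnect} themselves but not for the sign conditions your conclusion requires.

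In fact the statement as printed is false when $\lambda_0\ne 0$: take $T(x)=x+e_N$ on $\R^2$ and $u=0$. Then $T(u)=u+e_N$, every vector is an additive eigenvector, yet $q_J\equiv|J|>0$ for all $J$, so the ``either/or'' hypothesis holds vacuously. Your upper bound gives $\tfrac{1}{|I|}\cdot 1-\tfrac{1}{|J|}\cdot 1=0$, not $<0$, so the chain of implications stops exactly where I indicated. The fix --- which the paper presumably intends, since it is citing a normalized statement from the literature --- is either to assume $T(u)=u$, or to replace the conditions by $\inner{T(u-\mylambda e_{I^c})-u,e_I}<\lambda_0|I|$ and $\inner{T(u+\mylambda e_{J^c})-u,e_J}>\lambda_0|J|$; with that correction your monotonicity argument plus $p_I(0)=\lambda_0|I|$, $q_J(0)=\lambda_0|J|$ closes the proof. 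The issue you flagged (different rescalings $\tfrac{\mylambda}{\delta|I|}$ vs.\ $\tfrac{\mylambda}{\delta|J|}$) is genuinely harmless and your monotonicity lemma handles it; the missing ingredient is the eigenvalue normalization.
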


As is the case with the condition in Theorem \ref{thm:hypergraph}, determining whether an additive eigenvector is unique can be done more quickly using a hypergraph approach, particularly when $T$ is convex in addition to topical.  See \cite[Subsection 4.3]{AkGaHo20} and \cite{AkGa03}.

\section{Recession maps and semiderivatives} \label{sec:recession}

Let $X$ be a real Banach space and $f: X \rightarrow X$ be nonexpansive. If 
$$f_\infty(x) = \lim_{\mylambda \rightarrow \infty} \mylambda^{-1} f(\mylambda x)$$
exists for all $x \in X$, then we call $f_\infty$ the \emph{recession map} for $f$. Recession maps were introduced in \cite{GaGu04} for topical functions, but the same definition applies more broadly. Recession maps don't always exist, but when one does it can provide a sufficient condition for $f$ to have surjective displacement. The following theorem is based on a similar result for topical maps on $\R^n$ \cite[Theorem 13]{GaGu04}. Related infinite dimensional results with additional compactness assumptions can be found in \cite[Theorem 2.7]{KaKi06} and \cite[Theorem 12]{Penot03}.

\begin{theorem} \label{thm:recession}
Let $X$ be a finite dimensional real Banach space and $f: X \rightarrow X$ be nonexpansive. If the recession map $f_\infty$ exists and 0 is the only fixed point of $f_\infty$, then $f$ has surjective displacement.  
\end{theorem}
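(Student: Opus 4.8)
### Proof Proposal for Theorem \ref{thm:recession}

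The plan is to verify condition \ref{item:nuLim} of Theorem \ref{thm:abcd}, which for a finite dimensional space with any norm is necessary for dense displacement, and which — combined with Theorem \ref{thm:char} — will give surjective displacement once we connect it back to the approximate fixed point sets. Actually, the cleanest route is to show directly that the approximate fixed point sets $\Fix_\delta(f)$ are bounded for all $\delta>0$ (condition \ref{item:SaBounded}), since in finite dimensions this is equivalent to surjective displacement by Theorem \ref{thm:char}. So the core claim to establish is: if $f_\infty$ exists and has $0$ as its only fixed point, then $\Fix_\delta(f)$ is bounded for every $\delta > 0$.

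First I would record the basic properties of $f_\infty$: it is nonexpansive (as a pointwise limit of the nonexpansive maps $x \mapsto \lambda^{-1}f(\lambda x)$), and it is positively homogeneous, i.e. $f_\infty(tx) = t f_\infty(x)$ for $t \ge 0$. Next, suppose for contradiction that $\Fix_\delta(f)$ is unbounded for some $\delta>0$, so there is a sequence $x_k$ with $\|x_k\| \to \infty$ and $\|f(x_k) - x_k\| \le \delta$. Normalize: passing to a subsequence, $x_k/\|x_k\| \to v$ with $\|v\|=1$. The goal is to show $f_\infty(v) = v$, contradicting the hypothesis. Write $\lambda_k = \|x_k\|$ and $v_k = x_k/\lambda_k$. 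Then
$$\|f_\infty(v) - v\| \le \|f_\infty(v) - \lambda_k^{-1}f(\lambda_k v)\| + \|\lambda_k^{-1}f(\lambda_k v) - \lambda_k^{-1} f(\lambda_k v_k)\| + \|\lambda_k^{-1}f(x_k) - v_k\| + \|v_k - v\|.$$
The first term $\to 0$ by definition of $f_\infty$ at the fixed point $v$; the second is bounded by $\|v - v_k\| \to 0$ by nonexpansiveness; the third equals $\lambda_k^{-1}\|f(x_k) - x_k\| \le \delta/\lambda_k \to 0$; the fourth $\to 0$. Hence $f_\infty(v) = v$ with $v \ne 0$, the desired contradiction. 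Therefore $\Fix_\delta(f)$ is bounded for all $\delta > 0$, and Theorem \ref{thm:char} gives that $f - \id$ maps onto $X$.

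The main obstacle — and the place to be careful — is the interchange of limits implicit in the first term: we need $\lambda_k^{-1} f(\lambda_k v) \to f_\infty(v)$, which is exactly the defining convergence of the recession map evaluated at the \emph{fixed} direction $v$, so this is immediate; the subtlety is only that we must evaluate $f_\infty$ along the limiting direction $v$ rather than along the varying directions $v_k$, and the nonexpansiveness estimate in the second term is precisely what bridges that gap. One should also note that the hypothesis ``$0$ is the only fixed point of $f_\infty$'' is used only to force $v = 0$; since $\|v\| = 1$ this is the contradiction. No compactness beyond finite-dimensionality of $X$ (used to extract the convergent subsequence $v_k \to v$ from the unit sphere) is needed, which is why the finite-dimensional hypothesis appears. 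An alternative, essentially equivalent, write-up would invoke Theorem \ref{thm:char}(1): if some horofunction $h$ satisfied $h(f(x)) \le h(x) + c$, use Lemma \ref{lem:deepdirection} to get a direction $v$ of steepest descent and run a similar homogeneity argument to produce a nonzero fixed point of $f_\infty$; but the direct approximate-fixed-point argument above is the shortest path.
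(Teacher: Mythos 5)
Your proof is correct and follows essentially the same strategy as the paper: establish that the approximate fixed point sets $\Fix_\delta(f)$ are bounded and then invoke Theorem~\ref{thm:char}. The only cosmetic difference is in how the boundedness is justified: the paper notes that the 2-Lipschitz maps $g_\lambda(x) = \lambda^{-1}f(\lambda x) - x$ converge uniformly on the compact sphere $\partial B_1$ to $f_\infty - \id$, which has a strictly positive infimum there, yielding a quantitative lower bound $\|f(\lambda x)-\lambda x\|\ge \tfrac12\epsilon\lambda$; you instead run a sequential contradiction argument, using nonexpansiveness of $\lambda_k^{-1}f(\lambda_k\cdot)$ to bridge the changing directions $v_k$ to the limiting direction $v$, arriving at a nonzero fixed point $v$ of $f_\infty$.
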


\begin{proof}
For each $\mylambda > 1$, let $g_\mylambda(x) = \mylambda^{-1} f(\mylambda x) - x$.  Since $\mylambda^{-1} f(\mylambda x)$ is nonexpansive, each of the functions $g_\mylambda$ is 2-Lipschitz. If $f_\infty$ exists, then the functions $g_\mylambda$ converge pointwise to $f_\infty- \id$, and therefore they converge uniformly on the unit sphere $\partial B_1$. If $0$ is the only fixed point of $f_\infty$, then there must be an $\epsilon > 0$ such that  
$$\inf_{x \in \partial B_1} \|f_\infty(x) - x \| = \epsilon.$$  
Then for $\mylambda$ sufficiently large, 
$$\inf_{x \in \partial B_1} \|\mylambda^{-1}f(\mylambda x) - x \| \ge \tfrac{1}{2} \epsilon,$$
or equivalently,
$$\|f(\mylambda x) - \mylambda x \| \ge \tfrac{1}{2} \epsilon \mylambda$$
for all $x \in \partial B_1$. 
This means that the approximate fixed point sets $\Fix_\delta(f)$ are bounded for all $\delta > 0$, which proves that $f-\id$ is onto by Theorem \ref{thm:char}.
\end{proof}

The condition in Theorem \ref{thm:recession} is sufficient, but not necessary for $f$ to have surjective displacement. For example, the $(\R, |\cdot|)$-nonexpansive function
$$f(x) = \begin{cases} 
0 & \text{if } x \le 1 \\
x-\sqrt{x} & \text{ if } x > 1
\end{cases}$$
has surjective displacement even though $f_\infty(x) = x$ for all $x > 0$. 

Theorem \ref{thm:recession} fails in infinite dimensions.  For example, the right shift operator on $\ell_2(\N)$ from Example \ref{ex:rightshift} is linear, so it is its own recession map.  Clearly, 0 is the only fixed point of the right shift operator, and yet the right shift operator only has dense, not surjective, displacement on $\ell_2(\N)$.  However, when $X$ is a uniformly smooth and strictly convex Banach space, we can prove the following. 

\begin{theorem}
Let $X$ be a real Banach space that is uniformly smooth and strictly convex.  Suppose that $f: X \rightarrow X$ is nonexpansive and the recession map $f_\infty$ exists for $f$.  If 0 is the unique fixed point of $f_\infty$, then $f$ has dense displacement.
\end{theorem}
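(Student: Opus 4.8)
The plan is to verify condition \ref{item:nuLim} of Theorem \ref{thm:abcd}: since $X$ is uniformly smooth, Theorem \ref{thm:abcd} tells us that \ref{item:nuLim} is equivalent to $f$ having dense displacement (condition \ref{item:denseDisp}), so it suffices to establish \ref{item:nuLim}. First I would record the elementary properties of the recession map. Each map $x \mapsto \mylambda^{-1} f(\mylambda x)$ is nonexpansive and fixes $0$, so taking the pointwise limit (which exists by hypothesis) shows $f_\infty$ is nonexpansive with $f_\infty(0) = 0$; in particular $\|f_\infty(x)\| \le \|x\|$ for every $x \in X$. Also, because $X$ is smooth, the duality map $J$ is single-valued on $X \setminus \{0\}$.

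Now fix $x \in X$ with $\|x\| = 1$ and let $J(x) = \{x^*\}$. By Lemma \ref{lem:monotone}, the function $\mylambda \mapsto \inner{f(\mylambda x) - \mylambda x, x^*}$ is monotone decreasing, so its limit as $\mylambda \to \infty$ is either $-\infty$ or a finite real number. Suppose for contradiction that it converges to a finite value $c$. Dividing by $\mylambda$ and using that $\mylambda^{-1} f(\mylambda x) \to f_\infty(x)$ in norm while $x^*$ is a continuous linear functional, the quotient $\mylambda^{-1}\inner{f(\mylambda x) - \mylambda x, x^*}$ tends both to $0$ and to $\inner{f_\infty(x) - x, x^*}$, so $\inner{f_\infty(x), x^*} = \inner{x, x^*} = 1$. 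Since $\|f_\infty(x)\| \le 1$ and $\|x^*\|_* = 1$, this forces $\|f_\infty(x)\| = 1$ and $x^* \in J(f_\infty(x))$. Thus $x$ and $f_\infty(x)$ are two unit vectors sharing the same (unique) supporting functional $x^*$.

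The remaining step is to deduce $x = f_\infty(x)$, and this is where strict convexity enters. Uniform smoothness of $X$ gives reflexivity, so $X = X^{**}$, and strict convexity of $X = X^{**}$ means exactly that $X^*$ is smooth; hence the duality map $J^*$ of $X^*$ is single-valued on $X^* \setminus \{0\}$. For unit vectors the relation $x^* \in J(x)$ is, under the identification $X = X^{**}$, equivalent to $x \in J^*(x^*)$, and likewise $f_\infty(x) \in J^*(x^*)$; since $J^*(x^*)$ is a single point we conclude $x = f_\infty(x)$. But $\|x\| = 1 \ne 0$, contradicting the assumption that $0$ is the unique fixed point of $f_\infty$. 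Therefore $\lim_{\mylambda \to \infty} \inner{f(\mylambda x) - \mylambda x, x^*} = -\infty$ for every unit vector $x$, which is condition \ref{item:nuLim}, and the theorem follows from Theorem \ref{thm:abcd}. The one delicate point is this last duality argument: it must be phrased in terms of the duality map of $X^*$ and reflexivity, and it genuinely needs strict convexity, since without it a single functional can support two distinct unit vectors and the contradiction collapses. The rest of the argument is routine.
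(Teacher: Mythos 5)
Your proof is correct and follows essentially the same route as the paper: both reduce to condition \ref{item:nuLim} of Theorem \ref{thm:abcd}, use nonexpansiveness and $f_\infty(0)=0$ to get $\|f_\infty(x)\|\le 1$, use smoothness to obtain the unique supporting functional $x^*=J(x)$, and use strict convexity to rule out $\inner{f_\infty(x),x^*}=1$ unless $f_\infty(x)=x$. The only differences are stylistic: the paper argues directly that $\inner{f_\infty(x)-x,x^*}<0$ (so the normalized limit is negative, hence the unnormalized limit is $-\infty$), and it invokes strict convexity in its elementary geometric form (a norm-one functional attains the value $1$ at exactly one point of $B_1$), whereas you argue by contradiction and detour through reflexivity and the single-valuedness of the duality map of $X^*$ --- an equivalent but somewhat heavier formulation of the same fact.
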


\begin{proof}
Choose any $x \in X$ with $\|x \|=1$.  Since $X$ is uniformly smooth, $J(x)$ contains a single element, $x^*$.  Since $X$ is strictly convex, $x$ is the only element in $B_1$ with $\inner{x,x^*} = 1$.  Therefore, $\inner{f_\infty(x),x^*} < \inner{x,x^*}$. This means that 
$$\lim_{\mylambda \rightarrow \infty} \mylambda^{-1} \inner{f(\mylambda x) - \mylambda x, x^*} < 0$$
so
$$\lim_{\mylambda \rightarrow \infty} \inner{f(\mylambda x) - \mylambda x, x^*} = -\infty,$$
which proves that $f$ has dense displacement by Theorem \ref{thm:abcd}.  
\end{proof}

Another simple observation about recession maps is that they are composable.  

\begin{lemma}
If $X$ is a real Banach space and $f, g:X \rightarrow X$ are nonexpansive maps that have recession maps $f_\infty$ and $g_\infty$ respectively, then the recession map for $f \circ g$ is $f_\infty \circ g_\infty$.  
\end{lemma}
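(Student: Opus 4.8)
The plan is to unwind the definition of the recession map directly, using the fact that the rescaled maps are uniformly nonexpansive. Fix $x \in X$ and, for each $\mylambda > 0$, set $h_\mylambda := \mylambda^{-1} f(\mylambda \, \cdot\,)$. Since $f$ is nonexpansive, each $h_\mylambda$ is nonexpansive as well, because $\|h_\mylambda(z_1) - h_\mylambda(z_2)\| = \mylambda^{-1}\|f(\mylambda z_1) - f(\mylambda z_2)\| \le \|z_1 - z_2\|$. The starting observation is the identity $\mylambda^{-1} f(g(\mylambda x)) = h_\mylambda\big(\mylambda^{-1} g(\mylambda x)\big)$.

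Next I would introduce $y_\mylambda := \mylambda^{-1} g(\mylambda x)$, which by hypothesis converges to $g_\infty(x)$ as $\mylambda \to \infty$. The key step is a triangle inequality that exploits the nonexpansiveness of $h_\mylambda$:
$$\big\| h_\mylambda(y_\mylambda) - f_\infty(g_\infty(x)) \big\| \le \big\| h_\mylambda(y_\mylambda) - h_\mylambda(g_\infty(x)) \big\| + \big\| h_\mylambda(g_\infty(x)) - f_\infty(g_\infty(x)) \big\| \le \|y_\mylambda - g_\infty(x)\| + \big\| \mylambda^{-1} f(\mylambda\, g_\infty(x)) - f_\infty(g_\infty(x)) \big\|.$$
The first summand tends to $0$ because $y_\mylambda \to g_\infty(x)$, and the second tends to $0$ by the definition of $f_\infty$ evaluated at the single point $g_\infty(x)$. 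Hence $\mylambda^{-1} f(g(\mylambda x)) \to f_\infty(g_\infty(x))$, which is precisely the statement that the recession map of $f \circ g$ exists and equals $f_\infty \circ g_\infty$.

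I do not expect a serious obstacle here: the only point requiring care is that the rescaled maps $h_\mylambda = \mylambda^{-1} f(\mylambda \, \cdot\,)$ are nonexpansive \emph{uniformly in $\mylambda$}, which is exactly the equicontinuity needed to replace the fixed argument in the definition of $f_\infty$ by the moving argument $y_\mylambda$. Note that no finite-dimensionality, smoothness, or compactness hypothesis enters, so the lemma holds in an arbitrary real Banach space.
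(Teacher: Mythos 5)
Your proof is correct and follows essentially the same route as the paper: write $\mylambda^{-1} f(g(\mylambda x)) = f_\mylambda(g_\mylambda(x))$ with $f_\mylambda(z) = \mylambda^{-1}f(\mylambda z)$, then split via the triangle inequality and use the uniform nonexpansiveness of $f_\mylambda$ to control the first term. Your notation ($h_\mylambda$, $y_\mylambda$) differs cosmetically, but the decomposition, the role of equicontinuity, and the conclusion are identical to the paper's argument.
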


\begin{proof}
Let $f_\mylambda(x) = \mylambda^{-1} f(\mylambda x)$ and $g_\mylambda(x) = \mylambda^{-1}g(\mylambda x)$. Note that $f_\mylambda$ is nonexpansive for all $\mylambda > 0$. Therefore 
\begin{align*}
\|\mylambda^{-1} f(g(\mylambda x)) &- f_\infty(g_\infty(x))\| = \|f_\mylambda (g_\mylambda (x)) - f_\infty(g_\infty(x))\| \\ 
&\le \|f_\mylambda (g_\mylambda(x)) - f_\mylambda(g_\infty(x))\| + \| f_\mylambda(g_\infty(x)) - f_\infty(g_\infty(x))\| \\
&\le \|g_\mylambda(x) - g_\infty(x)\| + \| f_\mylambda(g_\infty(x)) - f_\infty(g_\infty(x))\|.
\end{align*}
Since both $\|g_\mylambda(x) - g_\infty(x)\|$ and $\| f_\mylambda(g_\infty(x)) - f_\infty(g_\infty(x))\|$ approach $0$ as $\mylambda \rightarrow \infty$, we conclude that $f_\infty \circ g_\infty$ is the recession map for $f \circ g$. 
\end{proof}
%

Just like recession maps provide a sufficient condition for surjective displacement, the semiderivative of a nonexpansive map can give a sufficient condition to check uniqueness of fixed points.  Nonexpansive maps aren't always differentiable.  When they aren't, it is sometimes possible to define a semiderivative as follows.  Let $X$ be a real Banach space, and $f: X \rightarrow X$ be a function.  For any $u \in X$, the \emph{semiderivative} of $f$ at $u$ is the function $f'_u: X \rightarrow X$ given by 
$$f'_u(x) = \lim_{t \rightarrow 0^+} \frac{f(u+tx)-f(u)}{t},$$
assuming those limits exist for all $x \in X$. Semiderivatives are similar to recession functions. For example, if $0$ is a fixed point of $f$, then the semiderivative $f'_0$ and the recession function $f_\infty$ are both limits of the expression $t^{-1} f(tx)$, one as $t \rightarrow 0^+$, the other as $t\rightarrow \infty$. When a semiderivative exists at a fixed point, it can be used to check if the fixed point is unique. The following is a special case of \cite[Theorem 6.1]{AkGaNu14}.

\begin{theorem}
Suppose that $X$ is a finite dimensional real Banach space, $f: X \rightarrow X$ is nonexpansive, and $u \in X$ is fixed point of $f$.  If the semiderivative $f'_u$ exists and 0 is the unique fixed point of $f'_u$, then $u$ is the unique fixed point of $f$. 
\end{theorem}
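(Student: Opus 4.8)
The plan is to reduce to the case $u=0$ and then argue by contradiction: if $f$ had a second fixed point $w\neq u$, I would manufacture fixed points of $f$ of arbitrarily small positive norm around $u$, which the hypothesis on $f'_u$ rules out because it forces $\|f(y)-y\|$ to be bounded below by a positive multiple of $\|y-u\|$ near $u$.

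First I would normalize: replacing $f$ by $\bar f(x)=f(u+x)-u$ gives a nonexpansive map fixing $0$, whose fixed points are exactly those of $f$ shifted by $-u$, and whose semiderivative at $0$ is $\bar f'_0=f'_u$. So I may assume $f(0)=0$ and that $0$ is the unique fixed point of $f'_0$. Now, mimicking the proof of Theorem \ref{thm:recession}, set $g_t(x)=t^{-1}f(tx)$ for $t>0$; each $g_t$ is nonexpansive, fixes $0$, and $g_t\to f'_0$ pointwise as $t\to 0^+$. Since $\{g_t\}$ is equi-Lipschitz and the unit sphere $\partial B_1$ is compact, the convergence is uniform on $\partial B_1$, and since $f'_0$ is continuous with no fixed point on $\partial B_1$ we get $\epsilon:=\inf_{x\in\partial B_1}\|f'_0(x)-x\|>0$. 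Hence there is a $t_0>0$ with $\|g_t(x)-x\|\ge\epsilon/2$ for all $x\in\partial B_1$ and $0<t\le t_0$; rewriting this in terms of $y=tx$ gives $\|f(y)-y\|\ge\tfrac{\epsilon}{2}\|y\|$ whenever $0<\|y\|\le t_0$. In particular $f$ has no fixed point $y$ with $0<\|y\|\le t_0$.

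For the contradiction, suppose $f(w)=w$ with $w\neq 0$. Let $\Gamma$ be the set of $\|w\|$-Lipschitz curves $\eta\colon[0,1]\to X$ with $\eta(0)=0$ and $\eta(1)=w$. This is a nonempty (it contains $s\mapsto sw$), convex, and — by the Arzel\`{a}-Ascoli theorem — compact subset of the Banach space $C([0,1],X)$, and $\eta\mapsto f\circ\eta$ is a $1$-Lipschitz self-map of $\Gamma$ (using $f(0)=0$, $f(w)=w$, and nonexpansiveness of $f$). By the Schauder fixed point theorem there is an $\eta^*\in\Gamma$ with $f(\eta^*(s))=\eta^*(s)$ for every $s\in[0,1]$, so each $\eta^*(s)$ is a fixed point of $f$. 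Setting $s^*=\sup\{s:\eta^*(s)=0\}$, continuity gives $\eta^*(s^*)=0$ while $\eta^*(1)=w\neq 0$ gives $s^*<1$; then for $s\in(s^*,1]$ the point $\eta^*(s)$ is a nonzero fixed point of $f$ with $\|\eta^*(s)\|=\|\eta^*(s)-\eta^*(s^*)\|\le\|w\|(s-s^*)$, which is $\le t_0$ once $s$ is close enough to $s^*$. This contradicts the previous paragraph, so $0$ — equivalently $u$ — is the unique fixed point of $f$.

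The step I expect to be the crux is the last one: a priori a second fixed point $w$ need not be joined to $0$ by a path of fixed points, so there is nothing as simple as the straight segment $[0,w]$ to exploit. (In a strictly convex norm one could show directly that $f$ fixes $[0,w]$ pointwise, whence $f'_0(w)=\lim_{s\to0^+}s^{-1}f(sw)=w$ is a nonzero fixed point of $f'_0$; but this fails for, e.g., the supremum norm, where $f$ need only map $[0,w]$ isometrically onto \emph{some} geodesic from $0$ to $w$.) The path-space argument with the Schauder fixed point theorem is what repairs this, at the cost of invoking Schauder's theorem — which has not appeared earlier in the paper — in place of the Brouwer theorem used in Section \ref{sec:unique}.
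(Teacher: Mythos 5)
Your proof is correct. There is, however, nothing in the paper to compare it against: the theorem is stated without proof and cited as a special case of \cite[Theorem 6.1]{AkGaNu14}, so your argument is an independent, self-contained demonstration.

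The first half of your argument --- normalizing to $u=0$ and using equicontinuity and compactness of $\partial B_1$ to upgrade the pointwise limit $g_t \to f'_0$ to a uniform one, whence $\|f(y)-y\| \ge \tfrac{\epsilon}{2}\|y\|$ for all small $\|y\|$ --- is exactly the mechanics of the paper's proof of Theorem \ref{thm:recession}, transplanted from $t\to\infty$ to $t\to 0^+$, and is sound. The second half is the real content, and your diagnosis of the difficulty is correct: when the norm is not strictly convex, a nonexpansive $f$ fixing $0$ and $w$ need only map the segment $[0,w]$ isometrically onto \emph{some} geodesic from $0$ to $w$, so one cannot conclude that the straight segment is fixed pointwise. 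Your repair --- applying Schauder's theorem to $\eta\mapsto f\circ\eta$ on the nonempty, convex, compact (by Arzel\`a--Ascoli, using finite dimensionality of $X$ and equi-Lipschitzness) set $\Gamma$ of $\|w\|$-Lipschitz paths from $0$ to $w$ --- is clean; the self-map, compactness, and convexity checks all go through, and taking $s\downarrow s^*$ along the fixed path $\eta^*$ produces nonzero fixed points of arbitrarily small norm, contradicting the estimate near $0$. As a pleasant byproduct, the Schauder step establishes that any two fixed points of a nonexpansive self-map of a finite-dimensional normed space are joined by a Lipschitz path of fixed points. The only mild aesthetic objection is that this invokes Schauder's theorem, which is heavier than the Brouwer theorem used elsewhere in Section \ref{sec:unique}; if one insists on Brouwer, one could restrict to piecewise-linear paths with $N$ nodes, obtain a fixed path for each $N$, and pass to a subsequential limit, but there is no real reason to do so.
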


Note that semiderivatives are composable.  In fact, they obey the chain rule. If $f, g$ are nonexpansive maps on a Banach space $X$ and if $g$ is semidifferentiable at $u$ and $f$ is semidifferentiable at $g(u)$, then $(f \circ g)'_u = f_{g(u)}' \circ g_u'$ (see \cite[Lemma 3.4]{AkGaNu14}).

\subsection{Homogeneous nonexpansive maps}
If $f$ is nonexpansive, then both the recession function $f_\infty$ and all semiderivatives $f'_u$ are nonexpansive, assuming they exist.  They are also \emph{homogeneous}, that is, $f_\infty(c x) = c f_\infty(x)$ and $f'_u(c x) = cf'_u(x)$ for all $c > 0$ and $x \in X$.  Homogeneous nonexpansive maps have some interesting properties that are worth mentioning here. As motivation, note that all of the theorems so far in this section involve determining whether 0 is the unique fixed point of a homogeneous nonexpansive map. 

Let $g: X\rightarrow X$ be homogeneous and nonexpansive. By continuity, $0$ must be a fixed point of $g$. When $X$ is finite dimensional, Theorem \ref{thm:recession} implies that 0 is the only fixed point of $g$ if and only if $g$ has surjective displacement since $g$ is its own recession map. On the other hand, if $0$ is not the only fixed point of $g$, then there is a nonzero $u \in X$ such that $\mylambda u$ is a fixed point for all $\mylambda > 0$. Therefore determining whether $0$ is the unique fixed point of a homogeneous nonexpansive map is equivalent to determining whether the fixed point set is bounded. For many finite dimensional real Banach spaces, this question can be answered using a non-deterministic algorithm proposed in \cite{LLN16}. We will give a brief description of the idea here.

We say that a vector $v \in X$ \emph{illuminates} a point $w$ on the boundary of the unit ball if $\|w+\epsilon v\| < 1$ for all sufficiently small $\epsilon > 0$.  Combining \cite[Theorem 3.4 and Corollary 4.5]{LLN16} with the observation above, we have the following result. 

\begin{proposition} \label{prop:illum}
Let $X$ be a finite dimensional real Banach space and $g: X \rightarrow X$ be homogeneous and nonexpansive.  Then 0 is the unique fixed point of $g$ if and only if there are points $w_1, \ldots, w_m \in X$ such that every extreme point of $B_1$ is illuminated by $g(w_i)-w_i$ for some $i$. If $X$ is smooth, then this happens if and only if $0$ is in the interior of the convex hull of $\{g(w_i) - w_i : 1 \le i \le m\}$.
\end{proposition}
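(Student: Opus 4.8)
The plan is to reduce the claim to the characterization of bounded fixed point sets from \cite{LLN16}, with homogeneity of $g$ as the bridge. First I would record the elementary consequence of homogeneity already observed above: since $g$ is continuous and homogeneous, $g(0)=0$, so $\Fix_0(g)$ is nonempty, and $\Fix_0(g)$ is a cone, because $g(u)=u$ implies $g(\lambda u)=\lambda g(u)=\lambda u$ for every $\lambda>0$. A cone is bounded if and only if it equals $\{0\}$, so $0$ is the unique fixed point of $g$ exactly when $\Fix_0(g)$ is nonempty and bounded. (Since $g$ is its own recession map, Theorem \ref{thm:recession} and Theorem \ref{thm:char} show that this is also equivalent to $g$ having surjective displacement, and hence to $\Fix_\delta(g)$ being bounded for all $\delta>0$; this remark makes the statement agree with whichever form of the \cite{LLN16} criterion one invokes.)

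Next I would apply \cite[Theorem 3.4]{LLN16}, the necessary and sufficient condition for a nonexpansive self-map of a finite dimensional normed space to have a nonempty and bounded fixed point set, together with its specialization \cite[Corollary 4.5]{LLN16}: the fixed point set of a nonexpansive $f$ is nonempty and bounded if and only if there are finitely many points $w_1,\dots,w_m$ such that every extreme point of $B_1$ is illuminated by $f(w_i)-w_i$ for some $i$, and, when $X$ is smooth, if and only if $0$ lies in the interior of the convex hull of $\{f(w_i)-w_i:1\le i\le m\}$. Taking $f=g$ and combining with the first paragraph gives the proposition. For a self-contained treatment of the smooth case I would argue directly: by convexity of $\epsilon\mapsto\|w+\epsilon v\|$, a unit vector $w$ is illuminated by $v$ if and only if $\inner{v,\nu_w}<0$, where $J(w)=\{\nu_w\}$ is the unique supporting functional at $w$ guaranteed by smoothness; every $\nu\in X^*$ with $\|\nu\|_*=1$ equals $\nu_w$ for some extreme point $w$ of $B_1$ (take an extreme point of the exposed face $\{x\in B_1:\inner{x,\nu}=1\}$ and use smoothness); and $0$ lies in the interior of the convex hull of a finite set $S$ precisely when $\min_{s\in S}\inner{s,\nu}<0$ for every $\nu\in X^*$ with $\|\nu\|_*=1$. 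Chaining these three equivalences identifies the illumination condition with $0\in\inter\operatorname{conv}\{g(w_i)-w_i\}$.

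I expect the only genuine work to be checking that the cited results from \cite{LLN16} transfer verbatim to the present setting. Nonemptiness of the fixed point set is automatic because $g(0)=0$, and the observation in the first paragraph that, for homogeneous $g$, the approximate fixed point sets $\Fix_\delta(g)$ are all bounded as soon as $\Fix_0(g)$ is bounded removes any discrepancy between an ``exact'' and an ``approximate'' formulation of the \cite{LLN16} criterion. Once these points are in place, no further estimates are required; the proposition is a translation of \cite[Theorem 3.4 and Corollary 4.5]{LLN16} through the cone observation.
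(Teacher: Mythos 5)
Your proposal is correct and follows essentially the same route as the paper: the observation that homogeneity makes $\Fix_0(g)$ a cone (so uniqueness of the zero fixed point is equivalent to $\Fix_0(g)$ being nonempty and bounded), followed by invoking \cite[Theorem 3.4 and Corollary 4.5]{LLN16}. The extra detail you supply for the smooth case is a sound elaboration of the cited corollary rather than a different method.
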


This proposition leads immediately to a simple algorithm to check whether $0$ is the unique fixed point of $g$.  Randomly choose $w_i \in X$ and calculate $g(w_i)-w_i$.  Repeat until the set $\{g(w_i)-w_i\}$ is sufficient to illuminate every extreme point of the unit ball.  If this process eventually stops, then $0$ is the unique fixed point. For an $n$-dimensional space with a smooth norm, it only requires $n+1$ vectors to illuminate every extreme point on the unit ball since a set of vectors $\{v_1, \ldots, v_m\}$ illuminates every extreme point exactly when $0 \in \inter \operatorname{conv} \{v_1, \ldots, v_m \}$ \cite[Lemma 4.3]{LLN16}. Therefore there is hope that this illumination algorithm would halt fairly quickly for many homogeneous nonexpansive maps.  By applying the illumination algorithm to the recession map $f_\infty$ or a semiderivative $f'_u$, we can check whether a nonexpansive map has surjective displacement or a unique fixed point. 

For polyhedral norm nonexpansive maps, we already have computable necessary and sufficient conditions for surjective displacement in Theorem \ref{thm:faces} and for uniqueness of fixed points in Corollary \ref{cor:unique}. In some cases however, the illumination algorithm might be quicker to check.  In particular, in $\R^n$ with the $\ell_1$-norm, the unit ball only has $2n$ extreme points, so the illumination algorithm might only need to check on the order of $2n$ limits in order to verify surjective displacement or uniqueness.  For other spaces, such as $\R^n$ with the $\ell_\infty$-norm, this algorithm would require at least $2^n$ limit computations (see \cite[Proposition 4.7]{LLN16}), and therefore would not have much, if any, advantage over the conditions in Theorem \ref{thm:faces} and Corollary \ref{cor:unique}.



On polyhedral normed spaces, homogeneous nonexpansive maps have additional structure that is worth mentioning. Let $X$ be a finite dimensional real Banach space with a polyhedral norm.  Let $g: X \rightarrow X$ be homogeneous and nonexpansive.  Note that $g(B_1) \subseteq B_1$ since $0$ is a fixed point of $g$. For any $x \in B_1$, we will use the notation $F_x$ to indicate the closed face of $B_1$ with $x$ in its relative interior. There is a simple partial order that $B_1$ inherits from its faces. For $x, y\in B_1$, we write $x \preceq y$ if $F_x \subseteq F_y$. We will also say that $x$ and $y$ are \emph{comparable}, and write $x \sim y$, if $x \preceq y$ and $y \preceq x$, that is if $x$ and $y$ are both in the relative interior of the same face of $B_1$. 

\begin{lemma} \label{lem:faceLattice}
Let $X$ be a finite dimensional real Banach space with a polyhedral norm.  Let $g: X \rightarrow X$ be nonexpansive and homogeneous. Then $g$ is order-preserving with respect to $\preceq$. That is, if $x, y \in B_1$ and $F_x \subseteq F_y$, then $F_{g(x)} \subseteq F_{g(y)}$. Consequently, if $x \sim y$, then $g(x) \sim g(y)$. 
\end{lemma}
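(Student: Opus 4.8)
The plan is to recast the face inclusion $F_x \subseteq F_y$ in dual-face language and then force $g(x)$ into the right exposed face of $B_1$ using Lemma~\ref{lem:BigRlittler} together with the homogeneity of $g$. First I would record the easy reductions. Since $g$ is homogeneous, $g(0) = 0$, so $\|g(a)\| = \|g(a) - g(0)\| \le \|a\|$ for every $a \in X$, and in particular $g(B_1) \subseteq B_1$. If $\|g(y)\| < 1$, then $g(y)$ is interior to $B_1$, so $F_{g(y)} = B_1 \supseteq F_{g(x)}$ and there is nothing to prove; hence I may assume $\|g(y)\| = 1$. Because $g$ is norm-decreasing and $F_a = B_1$ whenever $\|a\| < 1$, the hypothesis $F_x \subseteq F_y$ then forces $\|x\| = \|y\| = 1$ as well, so $F_y$ and $F_{g(y)}$ are proper faces of $B_1$ (hence lie in $\partial B_1$), $x \in F_x \subseteq F_y$, and $y \in \ri F_y$. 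Write $F_{g(y)}^*$ for the dual face of $F_{g(y)}$; it is a nonempty proper face of $B_1^*$, and each $\nu \in F_{g(y)}^*$ has $\|\nu\|_* = 1$ with $\inner{g(y),\nu} = 1$ (since $g(y) \in \ri F_{g(y)}$, so $\nu \in J(g(y))$). Each such $\nu$ exposes a face $H_\nu := \{z \in B_1 : \inner{z,\nu} = 1\}$ of $B_1$ that contains $F_{g(y)}$, and a standard fact about polytope face lattices is $\bigcap_{\nu \in F_{g(y)}^*} H_\nu = F_{g(y)}$. So it suffices to prove $\inner{g(x),\nu} = 1$ for every $\nu \in F_{g(y)}^*$; this puts $g(x) \in F_{g(y)}$, and since $F_{g(x)}$ is the smallest face of $B_1$ containing $g(x)$, it gives $F_{g(x)} \subseteq F_{g(y)}$. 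The final ``consequently'' then follows by exchanging $x$ and $y$.

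For the core step, note that $x \in F_y$ while $y \in \ri F_y$, so applying Lemma~\ref{lem:BigRlittler} to the proper face $F_y$ (with relative interior point $y$, second point $x$, and $R = 1$) produces a constant $c < 1$ such that $\|y - rx\| = 1 - r$ for $r := \tfrac{1}{2}(1-c) > 0$. Homogeneity gives $g(rx) = r\,g(x)$, so nonexpansiveness yields $\|g(y) - r\,g(x)\| = \|g(y) - g(rx)\| \le \|y - rx\| = 1 - r$. Fixing $\nu \in F_{g(y)}^*$ and pairing (recall $\|\nu\|_* = 1$ and $\inner{g(y),\nu} = 1$), we get $1 - r \ge \inner{g(y) - r\,g(x),\nu} = 1 - r\inner{g(x),\nu}$, so $\inner{g(x),\nu} \ge 1$; on the other hand $\inner{g(x),\nu} \le \|g(x)\|\,\|\nu\|_* \le 1$, whence $\inner{g(x),\nu} = 1$, as required.

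The closest thing to an obstacle is the bookkeeping rather than any deep difficulty. The one genuinely indispensable ingredient is homogeneity, used precisely to replace $g(rx)$ by $r\,g(x)$ in the estimate above; for a general nonexpansive map fixing $0$ one would only know $\|g(rx)\| \le r$, which is too weak to pin down $\inner{g(x),\nu}$. The remaining care is routine polytope geometry — that a proper face of $B_1$ lies in $\partial B_1$, that $J(a) = F^*$ for $a \in \ri F$ with $\|a\| = 1$, and the biduality $\bigcap_{\nu \in F^*} H_\nu = F$ — combined with tracking the degenerate cases in which one of the faces in play is all of $B_1$.
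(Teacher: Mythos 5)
Your proof is correct and takes essentially the same approach as the paper: both reduce to the case $\|g(y)\|=1$, invoke Lemma~\ref{lem:BigRlittler} to get an exact norm identity between $y$ and a scaled copy of $x$, then combine homogeneity and nonexpansiveness with pairing against $\nu \in J(g(y))$ to force $\inner{g(x),\nu}=1$. The only cosmetic difference is that you shrink $x$ to $rx$ with $r$ small and work with $\|y - rx\| = 1-r$, while the paper scales $y$ up to $\mylambda y$ and works with $\|\mylambda y - x\| = \mylambda - 1$; these are the same identity after rescaling.
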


\begin{proof}
Suppose that $x \preceq y$ for $x, y \in B_1$. If $\|g(y)\| < 1$, then $g(x) \preceq g(y)$ trivially since $F_{g(y)} = B_1$. Let us assume therefore that $\|g(y)\| = 1$. By Lemma \ref{lem:BigRlittler}, for any $\mylambda > 0$ large enough, 
$\|\mylambda y - x\| = \mylambda - 1$. Choose any $\nu \in J(g(y))$. To prove that $F_{g(x)} \subseteq F_{g(y)}$, it suffices to prove that $\inner{g(x),\nu} = 1$. For $\mylambda > 0$ large enough,
\begin{align*}
\mylambda - \inner{g(x),\nu} &= \inner{\mylambda g(y) - g(x), \nu} & (\text{since } \nu \in J(g(y)) )\\
&= \inner{g(\mylambda y) - g(x), \nu} & (\text{homogeneity}) \\
&\le  \|g(\mylambda y) - g(x)\| & (\text{since } \|\nu\|_* = 1)\\
&\le \|\mylambda x - y \| = \mylambda - 1. & (\text{nonexpansiveness})
\end{align*}
From this, we conclude that $\inner{g(x),\nu} \ge 1$, which means that $\inner{g(x),\nu} = 1$ since $\|\nu\|_* =1$.  Therefore $g(x) \in F_{g(y)}$ as claimed.
\end{proof}

An immediate consequence of Lemma \ref{lem:faceLattice} is that homogeneous nonexpansive maps on polyhedral normed spaces induce a well-defined order-preserving mapping on the face lattice of the unit ball as follows: define $\hat{g}(F_x)$ to be $F_{g(x)}$ for any face $F_x$ of $B_1$. Then by Theorem \ref{thm:unique}, $0$ is the unique fixed point of $g$ if and only if $B_1$ is the only fixed point of $\hat{g}$. This suggests another possible route to check for surjective displacement or uniqueness of fixed points. Since the face lattice is a finite partially ordered set, one can  search the face lattice for a fixed point of $\hat{g}$. In some cases it may be possible to use the order-preserving property of $\hat{g}$ to speed up the search.

\subsection*{Acknowledgement} The idea for this paper was suggested by Marianne Akian and St\'{e}phane Gaubert during the 2019 IWOTA conference in Lisbon, Portugal.  The author wishes to thank them both for several discussions during that conference and for their continued encouragement and suggestions since then. Thanks also to the referee and to Roger Nussbaum for their careful reading and helpful comments.  

\bibliography{DW2}
\bibliographystyle{plain}

\end{document}